\newtheorem{theorem}{Theorem}
\newtheorem{Lemma}[theorem]{Lemma}
\newtheorem{Proposition}[theorem]{Proposition}
\newtheorem{Condition}[theorem]{Condition}
\newcommand{\Pb}{{\mathds{P}}}
\newcommand{\Eb}{{\mathds{E}}}
\newcommand{\Indb}{{\mathds{1}}}
\newcommand{\NN}{{\mathbb{N}}}
\newcommand{\eps}{\varepsilon}
\newcommand{\cN}{{\mathcal N}}
\newcommand{\tcP}{\tilde{\mathcal P}}
\newcommand{\cC}{{\mathcal C}}
\newcommand{\cK}{{\mathcal K}}
\newcommand{\tC}{{\tilde{\mathcal C}}}
\newcommand{\tS}{{\tilde{S}}}
\newcommand{\tV}{{\tilde{V}}}
\newcommand{\tH}{{\tilde{H}}}
\newcommand{\tF}{{\tilde{F}}}
\newcommand{\bcC}{\breve{\mathcal{C}}}
\newcommand{\bC}{{\bar{C}}}
\def\tp{\tilde{p}}
\def\tn{\tilde{n}}
\def\tl{\tilde{\lambda}}
\def\tgamma{\tilde{\gamma}}
\def\tG{\tilde{G}}
\def\tD{\tilde{D}}
\def\bpi{\boldsymbol{\pi}}
\def\balpha{\boldsymbol{\alpha}}
\def\boldd{\boldsymbol{d}}
\def\boldk{\boldsymbol{k}}
\def\boldp{\boldsymbol{p}}
\def\boldg{\boldsymbol{\gamma}}
\def\boldt{\boldsymbol{t}}
\newcommand{\tGr}{\tilde{G}\left(n,\boldd,\boldg\right)}
\newcommand{\tGrg}{\tilde{G}\left(n,\boldd,\gamma\right)}
\newcommand{\tGrr}{\tilde{G}^*\left(n,\boldd,\boldg\right)}
\newcommand{\Gr}{G\left(n,\boldd\right)}
\newcommand{\Grr}{G^*\left(n,\boldd\right)}
\newcommand{\ie}{{\it i.e.~}}
\def\N{\mathbb{N}}
\newcommand{\cd}{(\gamma_r)_{r=0}^{\infty}}
\newcommand{\di}{(d_i)_1^n}
\newcommand{\intn}{\{1,...,n\}}
\newcommand{\pr}{(p_{r})_{r=0}^{\infty}}
\newcommand{\proj}{\phi}
\newcommand{\projj}{\phi}
\title{How Clustering Affects Epidemics \\ in Random Networks}
\author{Emilie Coupechoux, {\textit{INRIA - ENS}}\\
  Marc Lelarge, {\textit{INRIA - ENS}}\\
E-mail: Emilie.Coupechoux, Marc.Lelarge @ens.fr}
\begin{document}

\maketitle


\begin{abstract}
Motivated by the analysis of social networks, we study a model of random networks that has both a given degree distribution and a tunable clustering coefficient. We consider two types of growth processes on these graphs: diffusion and symmetric threshold model. 
The diffusion process is inspired from epidemic models. It is characterized by an infection probability, each neighbor transmitting the epidemic independently.
In the symmetric threshold process, the interactions are still local but the propagation rule is governed by a threshold (that might vary among the different nodes). An interesting example of symmetric threshold process is the contagion process, which is inspired by a simple coordination game played on the network.
Both types of processes have been used to model spread of new ideas, technologies, viruses or worms and results have been obtained for random graphs with no clustering. In this paper, we are able to analyze the impact of clustering on the growth processes. While clustering inhibits the diffusion process, its impact for the contagion process is more subtle and depends on the connectivity of the graph: in a low connectivity regime, clustering also inhibits the contagion, while in a high connectivity regime, clustering favors the appearance of global cascades but reduces their size. 

For both diffusion and symmetric threshold models, we characterize conditions under which global cascades are possible and compute their size explicitly, as a function of the degree distribution and the clustering coefficient. Our results are applied to regular or power-law graphs with exponential cutoff and shed new light on the impact of clustering.
\end{abstract}

{\bf Keywords:}
Contagion threshold, Diffusion, Random graphs, Clustering 

\section{Introduction}
\label{sec:intro}

Many network phenomena are well modeled as spreads of
epidemics through a network. However, depending on the motivation, different mechanisms are at work and we believe that different models should be used. 
For example, to model the spread of worms or email viruses, and, more generally, faults, a simple diffusion model is often used, where each node when it becomes infected or faulty 'transmits' the infection to her neighbors independently and with a given probability. 
There is now a vast literature on such epidemics on complex networks (see
\cite{new-book03} for a review).
But if nodes represent agents in a social network, the transmission mechanism is independent of the local condition
faced by the agents concerned.
But to model the spread of innovations as a social process, individual's adoption behavior is highly correlated with the behavior of her contacts. In this case, there is a factor of persuasion or coordination involved and relative considerations tend to be important
in understanding whether some new behavior or belief is adopted \cite{vega07}. 
In social contexts, the spread of information and behavior often
exhibits features that do not match well those of the diffusion model just described where an individual is influenced by each of her neighbors independently. In such a context, the linear threshold model, originally proposed by Granovetter \cite{G78}, captures in a simple way the local correlations among individuals. We will study the symmetric threshold model proposed by Lelarge \cite{lel:diff}, which generalizes both bootstrap percolation \cite{BP:bootstrap} and contagion model \cite{mor}.

In this paper, we will analyze two different types of epidemics modeled by simple growth processes that we now describe. In a given network, each node can either be active or inactive.
The {\em diffusion} process corresponds to the case where each node of the network that becomes active transmits the activation to her neighbors with a given probability, independently from each others. 
On the other hand, in the {\em symmetric threshold} model, a threshold is associated to each node, and the dynamics of the process corresponds to the case where a node of the network becomes active as soon as the number of her active neighbors exceeds the threshold of the node. As in the original model of \cite{lel:diff}, thresholds are (possibly) random, with a distribution depending on the degree of the node, and such that thresholds are independent among nodes. The symmetric threshold model will allow us to analyze the contagion process \cite{mor}.
For both models, we will first consider a case where there is only one initial active node and characterize conditions under which global cascades are possible, that is to say when a positive fraction of the population is active at the end of the process. In such cases we compute the probability of a cascade and its size.
Then, we consider the cascade size when a positive fraction of the population is initially active. The initial activations are random in that case, and the probability that a node belongs to the seed might depend on its degree.

We now describe the model of random graphs studied in this paper. For many real-world networks, the underlying graph $G$ is a power-law graph, \ie a
graph whose degree distribution follows a power law. Random graphs with a given degree sequence allow to model such behavior. This model is usually called the configuration model \cite{bb}. There is a vast literature on the analysis of the diffusion for such graphs \cite{new-book03}. The contagion process has also been studied for such graphs through heuristics \cite{wat02} or rigorously in \cite{lel:diff} or \cite{amini:10}.
Random graphs are not considered to be highly realistic models of most real-world networks, and they are used as first approximation as they are a natural choice for sparse interaction network in the absence of any known geometry.
One essential drawback of this model is that these graphs are 'locally tree-like': short cycles are very rare.
However, real-world networks are often highly clustered, meaning that there
is a large number of triangles and other short cycles \cite{new03}.
For social graphs, this is a consequence of the fact that friendship circles are typically strongly overlapping so that many of our friends are also friends of each other.

There are works in the physics and biology literature on models of random graphs with clustering \cite{new09}. Our model is inspired from \cite{trapman07} which allow to model random graphs with positive clustering and possibly power law degree distribution. The idea is to 'add' clustering to a standard configuration model by replacing some vertices by cliques.
By choosing the fraction of vertices replaced, this leads to a graph
where the amount of clustering can be tuned by adjusting the
parameters of the model. This model generalizes the standard configuration
model to incorporate clustering. Understanding how clustering affects diffusion and contagion remains largely an open question.
Our work is a first step towards addressing this issue in a systematic and rigorous way. In particular, we are able to make a rigorous analysis of the impact of a variation of the clustering coefficient while keeping the degree distribution in the graph fixed. To the best of our knowledge, this sensitivity analysis is new and gives a number of insights on the impact of clustering.
In \cite{trapman07} and \cite{GleesonMH:clustering}, the diffusion process on such graphs is analyzed by an heuristic approximation through a branching process with additional cliques. We derive rigorous proofs for these results.
A different model of random graphs with clustering, called random intersection graphs has been studied rigorously in \cite{DeijfenKets} and in \cite{britton-2007}, the diffusion process is studied on such graphs. However, the degree distribution for this kind of graphs has to be a Poisson distribution and clustering cannot vary independently of the degree distribution.
Up to our knowledge, results on the contagion model have not been proved before our work for random graphs with clustering.
\footnote{ A preliminary version without proofs of our work appeared in \cite{CL:Netgcoop}.} 
Recently, \cite{AOY} derives bounds which are valid for the contagion model \cite{mor} on deterministic networks. Our analysis in contrast gives asymptotic results as the size of the graph tends to infinity and allows us (by looking at a more specific model) to identify neatly the impact of clustering.


The paper is organized as follows. In Section \ref{sec:rg}, we present the graph model, compute its asymptotic degree distribution and its asymptotic clustering coefficient. We explain how to tune the clustering coefficient of the model, while keeping the asymptotic degree distribution fixed. In Section \ref{sec:diff}, we derive the minimal value for the probability of infection in our random graph model with
clustering such that a global diffusion is possible, and we compute the size of the diffusion in that case. We apply these results to random regular graphs and power-law graphs and show that clustering inhibits the diffusion process in that case. We also compute the cascade size for the diffusion process with degree based activation. In Section \ref{sec:cont}, we derive the cascade condition for the symmetric threshold model on our random graph model with clustering, together with the size of the cascade when it occurs. Numerical evaluations in the particular case of the contagion process show that the effect of clustering on the contagion threshold depends on the mean degree of the graph, and that clustering decreases the cascade size when it occurs. In addition, we finally compute the cascade size for the symmetric threshold model with a slight variant of the degree based activation. Proofs are given in Section \ref{sec:pf}.

\paragraph{Notations.}

In the following, we consider asymptotics as $n\to\infty$, and we
denote by $\to_p$ the convergence in probability as $n\to\infty$. The
abbreviation 'whp' (``with high probability'') means with probability
tending to 1 as $n\to\infty$, and we use the notation $o_p(n)$, $\Theta_p(n)$ in a
standard way (see \cite{JLR:2000} for instance): $X=o_p(n)$ means that, for every $\eps>0$, $\Pb(X>\eps n)\to 0$ as $n\to\infty$.
In addition, for integers $s\geq 0$ and $0\leq r\leq s$, let $b_{sr}$ denote the binomial probabilities $b_{sr}(p):=\Pb(\textrm{Bi}(s,p)=r)={s\choose r} p^r(1-p)^{s-r}$. 

\section{Random graph model and its basic properties}\label{sec:rg}

We first present the model for the random graph, and compute its asymptotic degree distribution and its asymptotic clustering coefficient (for two different definitions).

\subsection{Model of random graph with clustering}

We first consider the uniform random graph with fixed degree distribution: since this graph has asymptotically no clustering, we will then modify it to obtain a graph with clustering.

Let $n\in\N$ and let $\boldd=(d^{(n)}_i)_{i=1}^n=\di$ be a sequence
of non-negative integers such that $\sum_i d_i$ is even. 
The integer $n$ is the number of vertices in the graph and vertex $i\in [n]$ has degree $d_i$ in the graph.
Let $\Gr$ be
a graph chosen uniformly at random among all simple (i.e. with no multi-edges or self-loops) graphs with $n$ vertices
and degree sequence $\boldd$ (assuming such graphs exist) \cite{bb}. 

We will let $n\to \infty$ and assume that we are given $\boldd$
satisfying the following regularity conditions which are standard in the random graph literature, see \cite{mr95}:
\begin{Condition}
 \label{cond}
For each $n$, $\boldd=\di$ is a sequence of non-negative integers such that $\sum_i d_i$ is even. We assume that there exists a probability distribution $\boldp=\pr$ (independent of $n$) such that:
\begin{itemize}
 \item[(i)] $n_{r}/n=\left|\{i:d_i=r\}\right|/n\to p_{r}$ as $n\to\infty$, for all $r \geq 0$;
\item[(ii)] $\lambda:=\sum_{r} rp_{r}\in(0,\infty)$;
 \item[(iii)] $\sum_i d_i^3=O(n)$.
\end{itemize}
\end{Condition}

If $D_n$ is the degree of a vertex chosen uniformly at
random among the $n$ vertices of $\Gr$, and $D$ a random variable with
distribution $\boldp$, \textit{(i)} is equivalent to the fact that $D_n
\overset{d}{\longrightarrow} D$ (convergence in distribution). In addition, \textit{(iii)} is equivalent to $\Eb[D_n^3]=O(1)$, which implies that the random variables $D_n$ are uniformly integrable or equivalently the uniform summability of $\sum_d dn_d/n$, in particular $\Eb[D_n]\to\Eb[D]$.

The model of random graphs $\Gr$ are 'locally
tree-like', i.e. they contain very few (i.e. $o(n)$) short cycles in their structure. 
We now show that it is possible to generalize this model of random graphs to
incorporate clustering in a simple way. The resulting model of random graphs will still be tractable for the analysis of diffusion and symmetric threshold models.
To 'add' clustering in $\Gr$, we replace some
vertices by a clique of size the degree in the original graph, i.e. a
vertex of degree $r$ in the original graph $\Gr$ is replaced by $r$
vertices with all the $r(r-1)/2$ edges between them and each of them
is connected to exactly one of the neighbors of the vertex in the
original graph $\Gr$ as illustrated on Figure \ref{fig:clique}.
Note that if $r=0$, i.e. if the original node is isolated, this procedure remove the node. By convention a clique of size zero is empty.

\begin{figure}
\centering
\epsfig{file=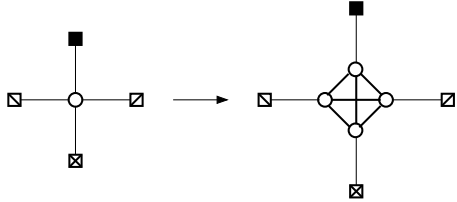, width=6cm}
\caption{Transformation}
\label{fig:clique}
\end{figure}

In order to be able to tune the clustering coefficient in the graph, we will not
replace all vertices by a clique but do a probabilistic choice whether
to replace a vertex or not:
for all $r\geq 0$, $\gamma_r\in [0,1]$ represents the probability that a vertex of degree
$r$ in $\Gr$ is replaced by a clique of size $r$ in the new
model denoted $\tGr$, where $\boldg=\cd$ is a short notation for the sequence of $\gamma_r$'s.
The choices to replace or not a vertex by a clique are made independently at each vertex of the original graph $\Gr$.
More formally, for each vertex $i\in\intn$, let $X(i)$ be a
Bernoulli random variable with parameter $\gamma_{d_i}$ (all Bernoulli random variables
being independent of each other). We construct the random graph $\tGr$ by replacing each vertex $i$ of $\Gr$ with $X(i)=1$, by a
clique of size $d_i$ where each vertex of the clique has exactly one neighbor
outside the clique being a neighbor of $i$ in the original graph. All vertices $i$ with $X(i)=0$ are unchanged.
In particular, if $\gamma_r=0$ for all $r\geq 0$, then we simply get
$\tGr=\Gr$, whereas if $\gamma_r=1$ for all $r\geq 0$, all vertices in
$\Gr$ are replaced by cliques (and isolated vertices are removed). With a little abuse of notation, we write $\tGrg$ for the graph $\tGr$ in which the sequence $\boldg$ is constant and equals to $\gamma$.

\subsection{Degree distribution in $\tGr$}
\label{subs:degree}

As we will see in the next subsection, the procedure described above
introduces clustering at soon as $\gamma_r>0$ for some $r$. It also modifies
the degree distribution in the graph and we derive the new degree
distribution here.
Recall that each vertex of
degree $r$ can either be replaced (with probability $\gamma_r$) by $r$
vertices of degree $r$, or stays as a single vertex of degree $r$
(with probability $1-\gamma_r$). The following proposition gives the
resulting asymptotic degree distribution in $\tGr$.

\begin{Proposition}\label{prop:deg}
 We consider the model $\tGr$ for a sequence $\boldd$ satisfying
 Condition \ref{cond} with probability distribution
 $\boldp=\pr$, and clustering parameter $\boldg=\cd$.
For all $r\geq 0$, let $\tn_r$ be the number of vertices with degree $r$ in $\tGr$, and let $\tn=\sum_r \tn_r$ be the total number of vertices in $\tGr$. 
Then we have, as $n\to\infty$: $$\frac{\tn}{n}\overset{p}{\longrightarrow} \tgamma:=\sum_{d\geq 0} \left[d \gamma_d+(1-\gamma_d)\right] p_d>0.$$ and, for all $r\geq 0$, the proportion of vertices with degree $r$ in $\tGr$ has the following limit, as $n\to\infty$:
\begin{eqnarray*}
 \frac{\tn_r}{\tn} \overset{p}{\longrightarrow} \tilde{p}_r:=\frac{\left[r\gamma_r+(1-\gamma_r)\right] p_r}{\tgamma}.
\end{eqnarray*}
\end{Proposition}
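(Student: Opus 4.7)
The plan is a standard first-and-second moment argument, exploiting the fact that conditional on the base graph $\Gr$ and the degree sequence $\boldd$, the replacements are carried out via independent Bernoulli variables $X(i)\sim\Ber(\gamma_{d_i})$. I first set up the bookkeeping: an original vertex $i$ contributes $X(i)d_i+(1-X(i))$ vertices to $\tGr$ (that is, $d_i$ vertices if the clique replacement occurs, and $1$ otherwise, with the convention that an isolated vertex is deleted when replaced), and when $d_i=r$ every contributed vertex has degree exactly $r$ in $\tGr$ (inside a clique, each vertex has $r-1$ internal neighbors plus one external neighbor). Thus
\[
\tn \;=\; \sum_{i=1}^n \bigl[X(i)d_i+(1-X(i))\bigr],
\qquad
\tn_r \;=\; \sum_{i:\,d_i=r}\bigl[rX(i)+(1-X(i))\bigr].
\]

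Writing $a_r:=r\gamma_r+(1-\gamma_r)$, the expectations are immediate: $\Eb[\tn_r]=n_r a_r$ and $\Eb[\tn]=\sum_r n_r a_r$. The first step is therefore to show $\Eb[\tn]/n\to\tgamma=\sum_r p_r a_r$ and $\Eb[\tn_r]/n\to p_r a_r$. The latter is trivial from Condition~\ref{cond}(i); the former is the subtle point, since $a_r\le r+1$ is unbounded in $r$. This is exactly where I use Condition~\ref{cond}(iii): a uniformly bounded empirical third moment implies that $\{d_i/n\}_i$ is uniformly summable in the index $r$, so I can pass the limit inside the sum by dominated convergence. The strict positivity of $\tgamma$ follows at once since Condition~\ref{cond}(ii) forces some $r\ge 1$ with $p_r>0$, for which $a_r\ge 1$.

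The next step is concentration around the mean. Since the $X(i)$ are independent,
\[
\mathrm{Var}(\tn) \;=\; \sum_{i=1}^n (d_i-1)^2 \gamma_{d_i}(1-\gamma_{d_i}) \;\le\; \sum_{i=1}^n d_i^2,
\qquad
\mathrm{Var}(\tn_r) \;\le\; n_r r^2 \;\le\; \sum_{i=1}^n d_i^2.
\]
Condition~\ref{cond}(iii) gives $\sum_i d_i^2=O(n)$ (by H\"older or because a bounded third empirical moment dominates the second), so both variances are $O(n)$. Chebyshev's inequality then yields $\tn/n\overset{p}{\to}\tgamma$ and $\tn_r/n\overset{p}{\to} p_r a_r$. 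Finally, since $\tgamma>0$, the continuous mapping theorem gives
\[
\frac{\tn_r}{\tn} \;=\; \frac{\tn_r/n}{\tn/n} \;\overset{p}{\longrightarrow}\; \frac{p_r a_r}{\tgamma} \;=\; \tp_r,
\]
which is the claim.

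The only genuinely non-routine point is the passage to the limit in the infinite sum defining $\tgamma$, which is unavoidable because the contribution $a_r$ per original vertex of degree $r$ grows linearly in $r$; all the work to secure this is already packaged into Condition~\ref{cond}(iii). Everything else is mean/variance bookkeeping with independent Bernoullis.
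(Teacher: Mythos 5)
Your proof is correct and takes essentially the same route as the paper's: both rely on the independence of the Bernoulli replacement variables, use Condition \ref{cond}-\textit{(iii)} (via uniform summability of $\sum_r r n_r/n$, equivalently $\sum_i d_i^2=O(n)$) to handle the unbounded per-vertex contribution, and conclude by dividing the two limits, the paper merely organizing the law-of-large-numbers step per degree class ($B_d\sim\mathrm{Bi}(n_d,\gamma_d)$) where you use a single global Chebyshev bound. One cosmetic slip: $(d_i-1)^2\le d_i^2$ fails when $d_i=0$, but $(d_i-1)^2\le d_i^2+1$ still yields $\mathrm{Var}(\tn)=O(n)$, so nothing breaks.
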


\begin{proof}
We first show the asymptotics for the number $\tn$ of vertices in $\tGr$.

Let $d\geq 0$, and let $B_d$ be the number of vertices with degree $d$ that are replaced by a clique. Then $B_d$ follows a Binomial distribution with parameters $(n_d,\gamma_d)$. By Condition \ref{cond}-\textit{(i)}, we have: $n_d/n\to p_d$, so that the Law of Large Numbers implies: $B_d/n \to_p \gamma_d p_d$ (which is still true if $p_d=0$).

The number of vertices with degree $d$ that are \textit{not} replaced by a clique is $n_d-B_d$, so we can express the total number $\tn$ of vertices in $\tGr$ the following way:
 \begin{eqnarray*}
  \frac{\tn}{n} = \frac{1}{n}\sum_d d B_d+(n_d-B_d)
 \overset{p}{\longrightarrow} \sum_d \left[d\gamma_d+(1-\gamma_d)\right] p_d =\tgamma
 \end{eqnarray*}
which follows from the previous limits, and the uniform summability of $\sum dn_d/n$ implied by Condition \ref{cond}-\textit{(iii)}.

Similarly, the total number of vertices with degree $r$ ($r\geq 0$) in $\tGr$ is $rB_r+(n_r-B_r)$, so the proportion of vertices with degree $r$ in $\tGr$ is:
\begin{eqnarray*}
 \frac{rB_r+(n_r-B_r)}{\tn} \overset{p}{\longrightarrow} \frac{\left[r\gamma_r+(1-\gamma_r)\right] p_r }{\tgamma} =\tp_r,
\end{eqnarray*}
which concludes the proof.
\end{proof}

In other words, if $\tD_n$ is the degree of a vertex chosen uniformly at random in $\tGr$, then Proposition \ref{prop:deg} implies that $\tD_n \overset{d}{\longrightarrow} \tD$, where $\tD$ is a random variable with distribution $(\tilde{p}_r)_{r\geq 0}$.
In the particular case where $\gamma_r=\gamma$ for all $r$, we have $\tgamma
=\gamma\lambda+1-\gamma$ and the mean degree of the graph is then
\begin{eqnarray*}
\tilde{\lambda} =\Eb[\tD] = \frac{\gamma\Eb[D(D-1)]+\Eb[D]}{\gamma\Eb[D-1]+1},
\end{eqnarray*}
which is a non-decreasing function of $\gamma$.

\subsection{Clustering coefficient}
\label{subs:clustering}

We now compute the clustering coefficient of the graph $\tGr$.
The most common definition of the clustering coefficient of a finite graph is given by:
\begin{eqnarray}\label{def:clustering}
C = \frac{3\times \mbox{ number of triangles}}{\mbox{number of connected triples}} \in[0,1].
\end{eqnarray}
In our model of random graphs where vertices are exchangeable, this definition can also be interpreted as the conditional probability that there is an edge between two vertices $j$ and
$k$, given that they have a common neighbor $i$. At the end of this subsection, we will also consider a local clustering coefficient defined for a vertex of degree larger than $3$ and compute the associated clustering coefficient for the graph based on this local measure for the graph $\tGr$.

\paragraph{Computation of the clustering coefficient.}

Note that the number of connected triples in $\tGr$ is simply $\sum_{v}d_v(d_v-1)/2$. On the other hand, for any vertex $v$ in $\tGr$, let $P_v$ be the number of pairs of neighbors of $v$ that share an edge together. More precisely, if $\cN_v$ is the set of neighbors of $v$ (whose cardinality is $|\cN_v|=d_v$), then $P_v$ is the number of pairs $\{w,w'\}\subset\cN_v$, $w\neq w'$, such that $w$ and $w'$ are also neighbors of each other. Thus $3$ times the number of triangles in $\tGr$ is given by $\sum_v P_v$. Hence following (\ref{def:clustering}), we define the clustering coefficient $C^{(n)}$ of the graph $\tGr$ by: 
\[C^{(n)}= \frac{2\cdot \sum_v P_v }{ \sum_v d_v(d_v-1)}\in [0,1].\] 
\begin{Proposition}\label{prop:clustering2}
 We consider the model $\tGr$ for a sequence $\boldd$ satisfying
 Condition \ref{cond}  with probability distribution
$\boldp=\pr$, and clustering parameter $\boldg=\cd$. Then we have for the clustering coefficient of $\tGr$:
\begin{eqnarray*}
C^{(n)} \overset{p}{\longrightarrow} C:= \frac{ \sum_{r \geq 2}
r(r-1)(r-2)\gamma_r p_r}{\sum_{r\geq 2}((r-1)\gamma_r+1)r(r-1)p_r}.
\end{eqnarray*}
\end{Proposition}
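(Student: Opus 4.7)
The plan is to compute the limits of the numerator $\sum_v P_v$ and the denominator $\sum_v d_v(d_v-1)$ of $C^{(n)}$ separately, and then take the ratio.

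For the denominator, I partition the vertices of $\tGr$ according to their origin. Each replaced vertex of degree $r$ in $\Gr$ is turned into $r$ new vertices of degree $r$ in $\tGr$ (contributing $r\cdot r(r-1)$ to $\sum_v d_v(d_v-1)$), while each non-replaced vertex of degree $r$ contributes $r(r-1)$. Letting $B_r$ denote the number of replaced vertices of degree $r$ (as in the proof of Proposition~\ref{prop:deg}), this gives
\[
\sum_v d_v(d_v-1) \;=\; \sum_{r\geq 2} r(r-1)\bigl[(r-1)B_r+n_r\bigr].
\]
Dividing by $n$ and using $B_r/n\to_p \gamma_r p_r$ together with $n_r/n\to p_r$, plus the uniform summability of $d\,n_d/n$ implied by Condition~\ref{cond}\textit{(iii)} (which lets me exchange limit and sum even though the relevant sum involves $r^3$), I obtain the claimed denominator $\sum_{r\geq 2}((r-1)\gamma_r+1)r(r-1)p_r$.

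For the numerator I claim that, up to a $o_p(n)$ error, the only triangles in $\tGr$ are those lying entirely within a single clique produced by the construction. To see this, I would do a short case analysis on how the three vertices of a triangle in $\tGr$ may be distributed between cliques and non-replaced vertices. Using the fact that each clique-member has exactly one external edge, one checks that a triangle cannot mix a clique with one or two outside vertices, nor can it span two or three distinct cliques: any such configuration would force a single clique-member to have two external neighbors, a contradiction. Thus triangles in $\tGr$ are either contained in one clique, or they come from a triangle of $\Gr$ with all three endpoints non-replaced. The number of triangles of the latter type is bounded by the number of triangles in $\Gr$, which is $O_p(1)$ under Condition~\ref{cond} by a standard second-moment computation on the configuration model (using $\Eb[D_n^3]=O(1)$); this contribution is therefore negligible after division by $n$.

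It remains to compute the contribution of the clique triangles. A vertex $v$ sitting inside a clique of size $r$ has $r-1$ internal neighbors and $1$ external neighbor; any two internal neighbors are joined (by the clique edges), while pairs involving the external neighbor are not edges (again by the one-external-edge property and the simplicity of the graph). Hence $P_v=\binom{r-1}{2}$ for such $v$, and summing over all clique-members gives
\[
\sum_v P_v \;=\; \sum_{r\geq 2} r B_r \binom{r-1}{2} + o_p(n)\;=\;\tfrac{1}{2}\sum_{r\geq 2} r(r-1)(r-2)B_r + o_p(n).
\]
Dividing by $n$ and using again $B_r/n\to_p \gamma_r p_r$ (with the same uniform summability argument to justify the exchange of limit and sum) yields $\tfrac{1}{2}\sum_{r\geq 2} r(r-1)(r-2)\gamma_r p_r$. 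Taking the ratio and clearing the factor of $2$ against the $2$ in the definition of $C^{(n)}$ gives the expression for $C$ claimed in the proposition.

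The main technical obstacle is the structural argument that no triangle in $\tGr$ can span vertices of more than one clique (or mix a clique with outside vertices); the rest of the argument is a direct Law-of-Large-Numbers computation along the lines of the proof of Proposition~\ref{prop:deg}, and the handling of triangles inherited from $\Gr$ uses only that their total count is $O_p(1)$.
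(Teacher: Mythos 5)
Your proof is correct and follows essentially the same route as the paper: it computes $2\sum_v P_v$ and $\sum_v d_v(d_v-1)$ separately, uses the one-external-edge property together with the simplicity of $\Gr$ to show that all but $o_p(n)$ of the triangle contributions come from inside cliques (where $P_v=\binom{d_i-1}{2}$), and concludes via $B_r/n\to_p \gamma_r p_r$. The only cosmetic difference is that you bound the residual triangles inherited from $\Gr$ by a direct moment estimate, whereas the paper invokes its Lemma \ref{lem:clustering} asserting that the clustering coefficient of $\Gr$ vanishes.
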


The proof is given at the end of the subsection. 
We will explore in more details the implications of Proposition \ref{prop:clustering2} in Section \ref{subs:fix_tilde_p}. Before that we present an alternative definition of clustering.

\paragraph{Another definition of clustering coefficient.}

The local clustering coefficient $C^{(n)}_v$ of a vertex $v$ in a graph quantifies how close the vertex and its neighbors are to being a clique. $C^{(n)}_v$ is defined to be the fraction of pairs of neighbors of $v$ that are also neighbors of each other \cite{ws98}. Using the notations of the previous paragraph, the local clustering coefficient of $v$ is $C^{(n)}_v=P_v\cdot 2/[d_v(d_v-1)]$. Note that this definition only makes sense if $d_v\geq 2$, i.e. when $v$ is not isolated, nor a leaf of the graph. The local clustering of vertices with degree one or zero is taken to be zero.
The clustering coefficient for the whole network $C_2^{(n)}$ is then defined
as the average of the clustering coefficients for each vertex: $C_2^{(n)}=\sum_v C^{(n)}_v /\tn$, where $\tn$ is the number of vertices in the graph, including those of degree one or zero. As observed in \cite{kaiser}, since we take the convention that the local clustering is zero for vertices with degree one or zero, the clustering coefficient in the graph can be very low if the graph contains a lot of such vertices, even if other vertices are highly clustered. We call $C_2^{(n)}$ the biased clustering coefficient and we refer to \cite{kaiser} for more information. In the next proposition, we give the asymptotics for the biased clustering coefficient in $\tGr$ but we will mainly deal with the definition (\ref{def:clustering}) in the rest of the paper. 

\begin{Proposition}\label{prop:clustering}
 We consider the model $\tGr$ for a sequence $\boldd$ satisfying
 Condition \ref{cond}  with probability distribution
$\boldp=\pr$. Then we have for the biased clustering coefficient of $\tGr$:
\begin{eqnarray*}
C_2^{(n)} \overset{p}{\longrightarrow} C_2:= \sum_{r \geq 3}
p_r\frac{\gamma_r}{\tilde{\gamma}} \left(r-2\right),
\end{eqnarray*}
where $\tilde{\gamma}$ is defined in Proposition \ref{prop:deg}.
\end{Proposition}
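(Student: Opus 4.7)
The plan is to decompose the sum $\sum_v C_v^{(n)}$ according to the two natural classes of vertices of $\tGr$: the \emph{unreplaced} vertices, each inherited from a single vertex of $\Gr$, and the \emph{clique-vertices}, each belonging to a clique of some size $r$ that has replaced a vertex of degree $r$ in $\Gr$. The main claim will be that the unreplaced vertices contribute $o_p(n)$ to $\sum_v C_v^{(n)}$, while the clique-vertices contribute the asymptotically dominant term, after which the result follows by dividing by $\tn$ and applying Proposition \ref{prop:deg}.

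For a clique-vertex $v$ sitting in a clique of size $r\geq 3$, I would argue that $d_v=r$ (the $r-1$ partners in the clique plus one external neighbor) and that the number of neighbor pairs that share an edge is exactly $\binom{r-1}{2}$ up to a negligible correction. Indeed, every pair of clique-partners is joined by a clique edge, while a mixed pair (one clique-partner together with the external contact) can share an edge only through a short cycle in the original graph $\Gr$: the external edge of a given clique-partner lands by construction on a distinct external vertex, so an accidental adjacency requires, e.g., a triangle through the original vertex being replaced. For $r\in\{1,2\}$ the clique-vertex contributes $0$ to $C_v^{(n)}$ (either $d_v<2$, or $P_v=0$ modulo a similar short-cycle correction). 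Consequently, for $r\geq 3$,
\[
C_v^{(n)} \;=\; \frac{2\binom{r-1}{2}}{r(r-1)}\;+\;\epsilon_v\;=\;\frac{r-2}{r}+\epsilon_v,
\]
with $\sum_v \epsilon_v = o_p(n)$, the bound on the error being controlled by the number of triangles in $\Gr$.

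For an unreplaced vertex $v$, the same analysis shows that two distinct neighbors of $v$ in $\tGr$ can share an edge only if the corresponding pre-image vertices in $\Gr$ were themselves adjacent, i.e.\ only through a triangle through $v$ in $\Gr$. Hence $\sum_{v\ \text{unreplaced}} P_v \leq 3\,T(\Gr)$, where $T(\Gr)$ denotes the number of triangles in $\Gr$. Under Condition \ref{cond} the expected number of triangles in the configuration model is bounded, so $T(\Gr)=O_p(1)$, and in particular the total contribution of the unreplaced vertices to $\sum_v C_v^{(n)}$ is $O_p(1)=o_p(n)$.

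Putting the two estimates together, if $B_r$ denotes the number of degree-$r$ vertices of $\Gr$ that get replaced, the number of clique-vertices coming from a clique of size $r$ is $rB_r$ and
\[
\frac{1}{n}\sum_v C_v^{(n)} \;=\; \frac{1}{n}\sum_{r\geq 3} rB_r\cdot \frac{r-2}{r} + o_p(1)\;=\;\sum_{r\geq 3}(r-2)\,\frac{B_r}{n}+o_p(1).
\]
As in the proof of Proposition \ref{prop:deg}, $B_r/n \to_p \gamma_r p_r$ for each $r$, and the uniform summability provided by Condition \ref{cond}-\textit{(iii)} (which dominates $\sum_r r^2 n_r/n$, hence also $\sum_r (r-2)B_r/n$) allows one to exchange the limit and the infinite sum. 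Combining this with $\tn/n \to_p \tilde\gamma$ from Proposition \ref{prop:deg} yields
\[
C_2^{(n)} \;=\; \frac{\sum_v C_v^{(n)}}{\tn}\;\overset{p}{\longrightarrow}\;\frac{\sum_{r\geq 3}(r-2)\gamma_r p_r}{\tilde\gamma}\;=\;\sum_{r\geq 3} p_r\,\frac{\gamma_r}{\tilde\gamma}(r-2).
\]
The main technical obstacle is the rigorous control of the short-cycle corrections: concretely, one needs a uniform bound showing that the number of triangles (and more generally of small cycles through any given vertex) in a configuration-model graph satisfying Condition \ref{cond} is $O_p(1)$, so that both error terms above are genuinely $o_p(n)$; this is standard but is the step that requires the most care.
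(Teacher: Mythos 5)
Your proposal is correct and follows essentially the same route as the paper: split $\sum_v C_v^{(n)}$ into clique-vertices (local clustering exactly $(r-2)/r$) and unreplaced vertices (negligible contribution), then use $B_r/n\to_p\gamma_r p_r$ with the uniform summability from Condition \ref{cond}-\textit{(iii)} and $\tn/n\to_p\tilde\gamma$ from Proposition \ref{prop:deg}. The only (harmless) difference is that your ``short-cycle corrections'' are in fact identically zero --- an adjacency between the external contact and a clique partner would force a multi-edge in the simple graph $\Gr$, which is the argument the paper uses --- and for the unreplaced vertices the paper simply invokes the standard fact that the clustering coefficient of $\Gr$ is $o_p(1)$, which plays the role of your $O_p(1)$ triangle-count bound.
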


\begin{proof}[Proof of Propositions \ref{prop:clustering2} and \ref{prop:clustering}]
We recall the following standard result for the random graph $\Gr$:
\begin{Lemma}
\label{lem:clustering}
Let $\bC^{(n)}$ (resp. $\bC^{(n)}_2$) be the (resp. biased) clustering coefficient in $\Gr$. Then we have: $\bC^{(n)} \overset{p}{\longrightarrow} 0$ and $\bC^{(n)}_2 \overset{p}{\longrightarrow} 0$.
\end{Lemma}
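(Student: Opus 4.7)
The strategy is the familiar one for the plain configuration model $\Gr$: the denominator (number of connected triples, or the vertex count $n$) is linear in $n$, while the numerator (triangle count) is only $O_p(1)$. I will first control the denominator, then bound the expected number of triangles using the pairing model, and finally assemble both clustering coefficients from these ingredients.

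\textbf{Step 1: linear denominator.} The number of connected triples in $\Gr$ is exactly
\[
\frac{1}{2}\sum_{v} d_v(d_v-1) \;=\; \frac{1}{2}\sum_{r\geq 0} r(r-1)\,n_r.
\]
Condition \ref{cond}(i) gives $n_r/n\to p_r$, and Condition \ref{cond}(iii) ($\sum_i d_i^3=O(n)$) gives uniform summability of $\sum_r r^2 n_r/n$; consequently $\tfrac{1}{n}\sum_v d_v(d_v-1)\to \sum_r r(r-1)p_r=:\lambda_2\in[0,\infty)$. Assuming $\lambda_2>0$ (otherwise the clustering coefficient is $0$ by convention and the statement is trivial), the denominator is $\Theta_p(n)$.

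\textbf{Step 2: triangles are $O_p(1)$.} I would use the pairing (configuration) model: the graph is obtained by a uniform matching of $\sum_i d_i=:2m$ half-edges. A triangle on vertices $i,j,k$ is determined by a choice of two half-edges at each of the three vertices and by requiring three specific pairings among them; the number of choices is $d_i(d_i-1)d_j(d_j-1)d_k(d_k-1)$, and each prescribed triple pairing occurs with probability $\tfrac{1}{(2m-1)(2m-3)(2m-5)}=O(n^{-3})$. Summing over unordered $\{i,j,k\}$ yields
\[
\Eb[T_n]\;\leq\; \frac{1}{6}\,\frac{\bigl(\sum_v d_v(d_v-1)\bigr)^{3}}{(2m-1)(2m-3)(2m-5)}\;=\;O(1),
\]
using Step~1 and $\lambda\in(0,\infty)$. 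One must also pass from the pairing model to the simple graph $\Gr$: under Condition \ref{cond}(iii) the probability that the pairing produces a simple graph is bounded away from $0$, so conditioning preserves $\Eb[T_n]=O(1)$ and hence $T_n=O_p(1)$ by Markov.

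\textbf{Step 3: assembling both clustering coefficients.} For the ordinary clustering coefficient, $\sum_v P_v = 3T_n=O_p(1)$ whereas $\sum_v d_v(d_v-1)=\Theta_p(n)$, so $\bC^{(n)}=O_p(1/n)\to_p 0$. For the biased version, note that $C_v^{(n)}>0$ requires $v$ to sit in at least one triangle, and each triangle provides at most $3$ such vertices; since $C_v^{(n)}\leq 1$,
\[
\sum_v C_v^{(n)}\;\leq\; 3T_n\;=\;O_p(1),
\]
and dividing by $n$ gives $\bC_2^{(n)}\to_p 0$.

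\textbf{Main obstacle.} The only nontrivial point is the triangle estimate together with the transfer from the pairing model to the simple graph $\Gr$; both are classical under Condition \ref{cond}(iii), and I would cite \cite{bb} or \cite{JLR:2000} rather than redo the computation in detail. Everything else is elementary moment bookkeeping from the regularity conditions.
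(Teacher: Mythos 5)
Your proof is correct, and it is worth noting that the paper itself offers no argument here: Lemma \ref{lem:clustering} is simply recalled as a standard fact about $\Gr$ (with the general references \cite{bb}, \cite{JLR:2000} in the background), so you are supplying the proof the authors omit. Your route is the canonical one: the denominator $\sum_v d_v(d_v-1)$ is deterministic given $\boldd$ and is $\Theta(n)$ by Condition \ref{cond} (note $\sum_i d_i^2=O(n)$ follows from $\sum_i d_i^3=O(n)$), the expected triangle count in the pairing model is $O(1)$ by the first-moment computation, and the transfer to the simple graph $\Gr$ works because $\liminf\Pb(\text{simple})>0$ under Condition \ref{cond}\textit{(iii)} (the paper cites \cite{ja:simple} for exactly this), so conditioning inflates the expectation by at most a bounded factor. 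The bound $\sum_v C_v^{(n)}\leq 3T_n$ for the biased coefficient is also fine. One small inaccuracy: in Step 1 you dismiss the case $\lambda_2:=\sum_r r(r-1)p_r=0$ by saying the clustering coefficient is then ``$0$ by convention,'' which is not right as stated --- for finite $n$ the number of connected triples can be positive even when it is $o(n)$, so $\bC^{(n)}$ need not vanish by fiat. But your own Step 2 already repairs this: if $\sum_v d_v(d_v-1)=o(n)$ the same first-moment bound gives $\Eb[T_n]=o(1)$, hence no triangles whp and $\bC^{(n)}=0$ whp, so no separate convention is needed. With that one sentence adjusted, the argument is complete.
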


We say that a vertex in $\tGr$ has \textit{parent} $i\in\intn$ if it belongs to a clique that replaces the vertex $i$ of $\Gr$ (when $X(i)=1$) or if it is $i$ (when $X(i)=0$). 
We first consider a vertex $v$ in $\tGr$ whose parent $i$ is such that $X(i)=1$. In this case we can directly compute the local clustering coefficient $C^{(n)}_v$. Indeed, vertex $v$ has $d_i-1$ neighbors inside $K$, that are all linked together (which gives $\frac{(d_i-1)(d_i-2)}{2}$ edges in total), and one neighbor $v'$ outside $K$, which is not linked to the other neighbors of $v$ (if it were the case, there would be multiple edges between $i$ and the parent $j$ of $v'$, which is not the case in the simple graph $\Gr$). Hence 
$$
 P_v =\frac{(d_i-1)(d_i-2)}{2} \;\textrm{ and }\;
 C^{(n)}_v = \frac{2 P_v}{d_i(d_i-1)} = \frac{d_i-2}{d_i},
$$
provided that $d_i\geq 2$. If $d_i\in\{0,1\}$, then $C^{(n)}_v=0$. 

We first prove Proposition \ref{prop:clustering}. Since there are $d_i$ such vertices inside a clique, the contribution of clique $K$ in the total clustering $C_2^{(n)}=\sum_v C^{(n)}_v /\tn $ is equal to $d_i C^{(n)}_v/\tn=(d_i-2)/\tn$. This leads to the following:
\begin{eqnarray*}
 \frac{\tn}{n} \; C_2^{(n)}&=& \frac{1}{n} \sum_{d\geq 2} (d-2) B_d + \frac{1}{n} \sum_{i:X(i)=0} C^{(n)}_i
\end{eqnarray*}
where $B_d$ is the number of vertices with degree $d$ that are replaced by a clique, as in the proof of Proposition \ref{prop:deg}. Using that $B_d/n \to_p \gamma_d p_d$, and that $\sum_{i:X(i)=0} C^{(n)}_i /n \to_p 0$ (as a consequence of Lemma \ref{lem:clustering}), we obtain: $\frac{\tn}{n} \; C_2^{(n)} \overset{p}{\longrightarrow} \sum_{d \geq 3}\left(d-2\right)\gamma_d p_d$. Proposition \ref{prop:clustering} follows, applying Proposition \ref{prop:deg}.

The end of the proof for Proposition \ref{prop:clustering2} is similar, and follows from the fact that:
\begin{eqnarray*}
 2\sum_v P_v/n 
 &\to_p& \sum_d d(d-1)(d-2) \gamma_d p_d, \\
\sum_v d_v(d_v-1)/n 
 & \to_p& \sum_d d(d-1)[ d\gamma_d +(1- \gamma_d) ] p_d .
\end{eqnarray*}
\end{proof}

\subsection{Tunable clustering coefficient with fixed degree distribution}
\label{subs:fix_tilde_p}

In this subsection, we show how to use our model in order to generate graphs with a given degree distribution and clustering. This construction will allow us to compare graphs with a given degree distribution but with various clustering coefficients and to see the impact of clustering on the epidemic.
This analysis is not possible with the random intersection graphs studied in \cite{britton-2007}. Indeed, once the clustering coefficient and the mean degree in the graph is fixed in \cite{britton-2007}, the degree distribution is completely determined and has to be compound Poisson. In particular, the variance (as all higher moments) of the degree distribution is also fixed. 

Our model has a lot more freedom in term of graphs that we can generate but still has one limitation: for a given degree distribution, there is a constraint on the maximal value of the clustering coefficient for our model. As a simple example, note that our model is not able to generate $2$-regular graphs with positive clustering coefficient. In order to provide a graph with a given asymptotic degree distribution $\boldsymbol{\tp}$ and a positive clustering coefficient, we need the following assumptions on $\boldsymbol{\tp}$:
\begin{Condition}
 \label{cond:tilde}
We assume that the probability distribution $\boldsymbol{\tp}$ satisfies:
\begin{itemize}
\item[(i)] $\sum_r r^2 \tp_r <\infty$;
 \item[(ii)] $\sum_{r\geq 3} \tp_r>0$;
\item[(iii)] $\tp_0=0$.
\end{itemize}
\end{Condition}
Under these conditions, we have the following proposition:
\begin{Proposition}\label{prop:algo}
Let $\boldsymbol{\tp}=(\tp_r)_{r\geq 0}$ be a probability distribution satisfying Condition \ref{cond:tilde}.
We define the maximal clustering coefficient as
\begin{eqnarray} \label{eq:Cmax}
 C^{\max}:=1-\frac{2\sum_{r\geq 2}(r-1)\tilde{p}_r}{\sum_{r\geq 2}r(r-1)\tilde{p}_r} .
\end{eqnarray}
Then for any value $0\leq C\leq C^{\max}$, there exists a sequence $\boldd$ satisfying Condition \ref{cond} with probability distribution $\boldp=\pr$
and a value of $\gamma\in [0,1]$ such that the model $\tGrg$ has asymptotic degree distribution $\boldsymbol{\tp}$ and asymptotic clustering coefficient $C$. 

More precisely, $\gamma$ is the solution of the following equation:
\begin{eqnarray} \label{eq:C}
 C=C(\gamma):=\frac{\sum_{r\geq 3} r(r-1)(r-2) \frac{\gamma}{(r-1)\gamma+1}\tilde{p}_r}{\sum_{r\geq 2}r(r-1)\tilde{p}_r}.
\end{eqnarray}
Let  $F(\gamma') := \sum_{r\geq 1} \frac{r}{(r-1)\gamma'+1}\tilde{p}_r$ for all $\gamma'\in[0,1]$, and set
\begin{equation}\label{eq:lambda}
 \lambda := 
\begin{cases} \frac{F(\gamma)(1-\gamma)}{1-\gamma F(\gamma)} & \text{if $\gamma \neq 1$,}
\\
\frac{1}{\sum_{r\geq 1} \tp_r /r} &\text{if $\gamma=1$.}
\end{cases}
\end{equation}
Then we can define $\boldp$ as:
\begin{eqnarray} \label{eq:p}
p_r := \frac{\tilde{p}_r[(\lambda-1)\gamma +1]}{(r-1)\gamma+1}\;\textrm{for all } r\geq 1, \; \textrm{and} \; p_0:=0.
\end{eqnarray}
\end{Proposition}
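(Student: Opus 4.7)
The plan is to invert the explicit formulas derived in Propositions \ref{prop:deg} and \ref{prop:clustering2}. Given $\boldsymbol{\tp}$, I will first solve Proposition \ref{prop:deg}'s relation $\tp_r = [r\gamma + (1-\gamma)]p_r/\tgamma$ for $p_r$ in terms of $\gamma$, and then use Proposition \ref{prop:clustering2} to express the asymptotic clustering of $\tGrg$ as an explicit function of $\gamma$, which I can invert by continuity and monotonicity.

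For the first step, the inversion yields $p_r = \tgamma\,\tp_r/((r-1)\gamma + 1)$ for $r \ge 1$, with $p_0 = 0$ forced by $\tp_0 = 0$. To close the system in $\gamma$ alone, I need to determine $\tgamma$ and $\lambda = \sum_r r p_r$ from two natural constraints: $\sum_r p_r = 1$ and $\tgamma = \gamma\lambda + (1-\gamma)$. The telescoping identity $1-\gamma = ((r-1)\gamma + 1) - r\gamma$ is key: multiplying by $\tp_r/((r-1)\gamma + 1)$ and summing yields $(1-\gamma)G(\gamma) = 1 - \gamma F(\gamma)$, where $G(\gamma) := \sum_{r \ge 1} \tp_r/((r-1)\gamma + 1)$. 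Combined with the normalization $\tgamma G(\gamma) = 1$, this self-consistently produces $\tgamma = (1-\gamma)/(1-\gamma F(\gamma))$ and $\lambda = F(\gamma)(1-\gamma)/(1 - \gamma F(\gamma))$, matching (\ref{eq:lambda}). The boundary case $\gamma = 1$ is handled separately by direct normalization of $p_r = \lambda\tp_r/r$.

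Substituting $p_r = \tgamma\,\tp_r/((r-1)\gamma + 1)$ into the clustering formula of Proposition \ref{prop:clustering2} makes the factor $(r-1)\gamma + 1$ cancel between numerator and denominator, producing exactly the ratio $C(\gamma)$ in (\ref{eq:C}). Strict monotonicity of $\gamma \mapsto C(\gamma)$ on $[0,1]$ follows from $\tfrac{d}{d\gamma}[\gamma/((r-1)\gamma + 1)] = 1/((r-1)\gamma + 1)^2 > 0$, combined with the $\gamma$-independence of the denominator, and continuity is immediate. The boundary values $C(0) = 0$ and $C(1) = \sum_{r \ge 3}(r-1)(r-2)\tp_r / \sum_{r \ge 2} r(r-1)\tp_r$ are computed directly; the identity $r(r-1) - 2(r-1) = (r-1)(r-2)$ shows $C(1) = C^{\max}$. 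By the intermediate value theorem there exists $\gamma \in [0,1]$ with $C(\gamma) = C$.

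It remains to verify that the resulting $\boldp$ satisfies Condition \ref{cond} so that a sequence $\boldd$ with $n_r/n \to p_r$ can be produced by a standard rounding construction. Non-negativity and summation to $1$ follow from the previous step, and $\lambda < \infty$ because $F(\gamma) \le F(0) = \sum_r r\tp_r < \infty$ by Condition \ref{cond:tilde}-(i). For the third-moment condition, when $\gamma > 0$ the bound $(r-1)\gamma + 1 \ge \gamma(r-1)$ gives $r^3 p_r = O(r^2 \tp_r)$ uniformly in $r$, so Condition \ref{cond:tilde}-(i) suffices. The main technical obstacle is the algebra tying $\sum p_r = 1$ to the definition of $\lambda$, together with the apparent singular behavior of (\ref{eq:lambda}) as $\gamma \to 1$, both of which are resolved cleanly by the telescoping identity above.
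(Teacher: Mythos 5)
Your proposal is correct and follows essentially the same route as the paper: solve $C(\gamma)=C$ using strict monotonicity of $\gamma\mapsto C(\gamma)$ (with $C(1)=C^{\max}$), define $\lambda$ and $p_r$ as in \eqref{eq:lambda}--\eqref{eq:p}, check that $\boldp$ is a probability distribution satisfying Condition \ref{cond}, and conclude via Propositions \ref{prop:deg} and \ref{prop:clustering2} with a rounded degree sequence. The only difference is presentational: you derive \eqref{eq:lambda} and \eqref{eq:p} from the normalization constraints via the telescoping identity $(1-\gamma)G(\gamma)=1-\gamma F(\gamma)$ (which also yields $1-\gamma F(\gamma)>0$ for free), whereas the paper posits the formulas and verifies $\sum_r p_r=1$ and $\sum_r rp_r=\lambda$ directly; this is the same algebra run in the opposite direction.
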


\begin{proof}
 We first show that $\gamma$, $\lambda$ and $\boldp$ are well-defined, and that $\boldp$ is a probability distribution that satisfies Condition \ref{cond}.

The solution $\gamma$ of equation \eqref{eq:C} exists and is unique. Indeed we have the following derivative for $C(\gamma)$: $$\frac{dC}{d\gamma}(\gamma) = \frac{\sum_{r\geq 3} r(r-1)(r-2) \frac{1}{[(r-1)\gamma+1]^2}\tilde{p}_r}{\sum_{r\geq 2}r(r-1)\tilde{p}_r},$$
which is positive since $\sum_{r\geq 3} \tp_r>0$. Hence the clustering coefficient $C(\gamma)$ is an increasing function of $\gamma$, taking values between $0$ and $ C(1)= C^{\max}$. Since $C\in[0,C^{\max}]$, there exists a unique $\gamma\in[0,1]$ such that $C(\gamma)=C$.

The constant $\lambda$ is well-defined: if $\gamma<1$, then for all $r\geq 1$, $(r-1)\gamma+1>r\gamma$, which leads to $F(\gamma)<1/\gamma$, \ie $1-\gamma F(\gamma)>0$. 

In addition, we have that $\sum_r rp_r=\lambda$. Indeed, if $\gamma=1$, then $rp_r=\lambda \tp_r$, and summing over $r\geq 0$ gives the result. If $\gamma\neq 1$, we have that $\sum_r rp_r=[(\lambda-1)\gamma +1] F(\gamma)=\lambda$, where the first equality comes from \eqref{eq:p} and the second one from \eqref{eq:lambda}.

We can easily verify that $\boldp$ is a probability distribution: for all $r\geq 0$, we have that $\left((r-1)\gamma+1\right)p_r=\tp_r[(\lambda-1)\gamma +1]$ (due to \eqref{eq:p} and the fact that $\tp_0=0$). Summing over all $r\geq 0$ and using the fact that $\sum_r rp_r=\lambda$ finally gives that $\sum_r p_r=1$. 

We know consider the graph $\tGrg$, with $\boldd$ given by the following. For each $n$, let $\boldd$ such that $\left|\{i:d_i=r\}\right|= \lfloor n p_{r}\rfloor$ for all $r\geq 0$. In addition we adjust the value of $d_n$ (for instance) such that $\sum_i d_i$ is even. Then $\sum_i d_i^3=\sum_r r^3 \lfloor n p_r\rfloor =O(n)$ due to Condition \ref{cond:tilde}-\textit{(ii)} and equation \eqref{eq:p}. Hence Condition \ref{cond} is satisfied by $\boldd$.

We can verify that $\tGrg$ has asymptotic degree distribution $\boldsymbol{\tp}$, using Proposition \ref{prop:deg} and equation \eqref{eq:p}, and asymptotic clustering coefficient $C$, using Proposition \ref{prop:clustering2}, equations \eqref{eq:C} and \eqref{eq:p}.
\end{proof}

A similar result can be proved with the biased clustering coefficient. In that case, $C(\gamma)$ is replaced by $C_2(\gamma):=\sum_{r\geq 3} \frac{(r-2) \gamma}{r\gamma+1-\gamma} \tilde{p}_r$ and $C_2^{\max}:=\sum_{r\geq 3}\frac{r-2}{r}\tilde{p}_r$. 
We can see on Figure \ref{fig:Clust} that the interval of reachable clustering values is larger for the first notion of clustering. This illustration uses a power law degree distribution with exponential cutoff for the distribution $\boldsymbol{\tp}$: there exists a power $\tau>0$ and a cutoff $\kappa>0$ such that, for all $r\geq 1$, $\tp_r=c(\tau,\kappa) \cdot r^{-\tau} e^{-r/\kappa}$, where $c(\tau,\kappa)=1/(\sum_s s^{-\tau} e^{-s/\kappa}) $ is a normalizing constant. This cutoff $\kappa$ allows Condition \ref{cond:tilde} to be satisfied for any power $\tau>0$: in all figures, we will take $\kappa=50$. In order to increase the mean degree of the graph in Figure \ref{fig:Clust}, we decrease the power $\tau$.

\begin{figure}
\centering
\epsfig{file=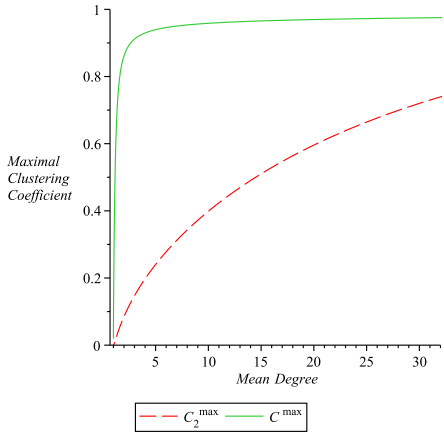, width=6cm}
\caption{Reachable clustering values for both notions of clustering coefficient, with respect to the mean degree $\tl$ in $\tGr$ (when the graph $\tGr$ has power law degree distribution $\tp_r \propto r^{-\tau} e^{-r/50}$)}
\label{fig:Clust}
\end{figure}

\section{Diffusion threshold for random graphs with clustering}\label{sec:diff}

\subsection{Diffusion model}\label{subs:def_diff}

In this section, we study a simple diffusion model depending on a
single parameter $\pi\in [0,1]$. For a given graph $G$, the dynamics of the diffusion is as follows: some
set of nodes $S$ starts out being active; all other
nodes are inactive. When a node becomes active, each of her neighbors becomes active with probability $\pi$ independently from each other. 
The final state of the diffusion can also be
described in term of a bond percolation process in the graph $G$. 
Randomly delete each
edge with probability $1-\pi$ independently of all other edges. Denote
by $G_\pi$ the resulting graph. Then
any node in $S$ will activate all nodes in its connected component in
$G_\pi$. 

\subsection{Phase transition for the diffusion with a single activation} \label{subs:diff}

In this subsection we consider diffusion starting from one active node and all other nodes being inactive, and we derive conditions under which a single starting active node can activate a large fraction of the
population in $G=\tGr$. This problem corresponds to the existence of a
'giant component' in the random graph obtained after bond percolation.

In order to state our result, we first need to recall some basic results about random graphs with small
order. For $d\in \NN$, let $K_d$ be the complete graph on
$d$ vertices denoted $\{1,\dots , d\}$, with $d(d-1)/2$ edges. For $\pi\in [0,1]$, we denote
by $K_d(\pi)$ the random graph obtained from $K_d$ after bond
percolation with parameter $\pi$, i.e. each edge of $K_d$ is kept
independently of the others with probability $\pi$, otherwise it is
removed.

We need to compute the probability that the component in $K_d(\pi)$
containing vertex $1$ has $k$ vertices, denoted by $f(d,k,\pi)$. Note
that $f(d,d,\pi)$ is simply the probability that $K_d(\pi)$ is
connected and has been computed in \cite{gil59}. Indeed simple
computations show that we have the recurrence relation
\begin{eqnarray}
\nonumber f(d,d,\pi) &=& 1-\sum_{k=1}^{d-1} {d-1\choose k-1}
f(k,k,\pi)(1-\pi)^{k(d-k)},\\
\label{eq:deff}f(d,k,\pi) &=& {d-1\choose k-1}f(k,k,\pi)(1-\pi)^{k(d-k)},
\end{eqnarray}
for any $k\leq d$.

We now define for $d\in \NN$ and $\pi\in [0,1]$, the random variable $\cK(d,\pi,\boldg)$ by
\begin{eqnarray*}
\Pb\left( \cK(d,\pi,\boldg)=k\right)= (1-\gamma_d) \Indb(d=k) + \gamma_d f(d,k,\pi),
\end{eqnarray*}
where $f$ is defined in (\ref{eq:deff}). In words, $\cK(d,\pi,\boldg)$ is equal to $d$ with probability $1-\gamma_d$ and to the size of the component in $K_d(\pi)$ containing $1$ with the remaining probability. 

In addition, set for all $k\geq 1$:
\begin{eqnarray}
 \varrho_k &:=& p_k(1-\gamma_k)+\sum_{d\geq k} \frac{d}{k} f(d,k,\pi) p_d \gamma_d, \label{def:rhok} \\
 \varrho &:=& \sum_{\ell} \varrho_{\ell}, \label{def:rho} \\ 
 \mu &:=& \sum_{\ell} \ell \varrho_{\ell} / \varrho, \label{def:mu} \\
 \sigma_k &:=& p_k(1-\gamma_k)+\sum_{d\geq k} d \, f(d,k,\pi) p_d \gamma_d. \label{def:sigma} 
\end{eqnarray}
Using the notation $\tgamma$ defined in Proposition \ref{prop:deg}, we also define (omitting the dependence on $\boldp$ and $\boldg$):
\begin{eqnarray*}
L(z)&:=&\sum_{s} \frac{\sigma_s}{\tgamma} \left[1-(1-\pi+\pi z)^s\right] \nonumber \\
 h(z)&:=&\sum_{s} s \frac{\varrho_s}{\varrho} (1-\pi+\pi z )^s \nonumber  \\
 \label{eq:zetaDiff} \zeta &:=&\sup\{\, z\in[0,1):\mu z(1-\pi+\pi z)=h(z)\}.
\end{eqnarray*}

For a graph $G=(V,E)$ and a parameter $\pi\in [0,1]$, we denote by $C^{b}(\pi)$ the size of the largest component in the bond percolated graph $G_\pi$.

\begin{theorem}\label{th:diff}
Consider the random graph $G=\tGr$ for a sequence $\boldd$ satisfying
Condition \ref{cond} with probability distribution
$\boldp=\pr$, and clustering parameter $\boldg=\cd$.
Let $D^*$ be a random variable with distribution $p^*_r$ given by $p^*_{r-1} =
\frac{rp_r}{\lambda}$ for all $r\geq 1$.
We define $\pi_c$ as the solution of the equation:
\begin{eqnarray*}
\pi \Eb\left[ \cK(D^*+1,\pi,\boldg)-1\right]=1.
\end{eqnarray*}
\begin{itemize}
\item[(i)] if $\pi>\pi_c$, we have (with the notations above) that $\zeta\in(0,1)$. In addition the asymptotic size of the largest component of the percolated graph $G_{\pi}$ obtained from $\tGr$ is:
\begin{eqnarray*}
 C^{b}(\pi)/\tn &\overset{p}{\longrightarrow}& L(\zeta) >0.
\end{eqnarray*}
\item[(ii)] if $\pi<\pi_c$, we have $C^{b}(\pi)=o_p(\tn)$.
\end{itemize}
\end{theorem}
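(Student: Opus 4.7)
The plan is to reduce the theorem to the known giant-component and bond-percolation results for the configuration model. The starting observation is that $\tGr$ has a hierarchical structure: the ``macroscopic'' scaffold is the original configuration-model graph $\Gr$, and each vertex $i$ of $\Gr$ has (with probability $\gamma_{d_i}$) been replaced by a clique of size $d_i$ carrying $d_i$ external half-edges, one per clique vertex. Bond percolation in $\tGr_\pi$ splits into intra-clique and cross-clique parts, which are independent by construction; the former produces the random components of $K_{d_i}(\pi)$, the latter percolates the external edges of $\Gr$.

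First I would collapse each intra-clique percolated component into a ``super-node,'' producing a multigraph $\hat G_\pi$ whose vertices are super-nodes, whose edges are the surviving external edges, and whose degree distribution can be read off from $\varrho_k$ and $\sigma_k$: $\varrho_k$ is the expected number of super-nodes of size $k$ per vertex of $\Gr$, while $\sigma_k$ is the expected number of $\tGr$-vertices living in super-nodes of size $k$, so that $\sigma_k/\tgamma$ (respectively $\varrho_k/\varrho$) is the asymptotic fraction of $\tGr$-vertices (respectively super-nodes) lying in size-$k$ super-nodes. Because $\Gr$ itself is built by uniform pairing of half-edges, and because both the clique-replacement choices $X(i)$ and the intra-clique percolation are independent across $i$, the external half-edges of the super-nodes are still paired uniformly at random; in other words, $\hat G_\pi$ is asymptotically distributed as a configuration model with an additional layer of independent bond percolation of parameter $\pi$ on its edges. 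The regularity conditions transfer from Condition \ref{cond} to this super-node degree sequence, using $\sum_i d_i^3 = O(n)$ to control second moments of the super-node degrees.

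The second main step is to apply a standard giant-component theorem for bond-percolated configuration models (of Molloy--Reed / Janson--Luczak type). The branching-process heuristic on $\hat G_\pi$ says that the mean number of offspring of a size-biased external half-edge is $\pi$ times the expected number of other external half-edges of the super-node reached. Unpacking the mixture in the definition of $\cK(D^*+1,\pi,\boldg)$ shows that this mean equals $\pi\,\Eb[\cK(D^*+1,\pi,\boldg)-1]$, which identifies $\pi_c$. Above threshold, $\zeta$ is the unique fixed point in $(0,1)$ of $\mu z(1-\pi+\pi z)=h(z)$; interpreting $\zeta$ as the probability that following a random external half-edge does not hit the giant cluster, the probability that a super-node of size $s$ does hit the giant is $1-(1-\pi+\pi\zeta)^s$, and averaging over $s$ with weights $\sigma_s/\tgamma$ gives $L(\zeta)$. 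Below threshold, subcriticality of the branching process together with the usual exploration / second-moment argument yields $C^b(\pi)=o_p(\tn)$.

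The main obstacle will be step one: showing that $\hat G_\pi$ is truly asymptotically a configuration model and that its degree sequence satisfies the regularity hypotheses of the chosen giant-component theorem. Independence of the $X(i)$'s and of the intra-clique percolations across distinct vertices of $\Gr$ makes the local contributions to $\hat G_\pi$ i.i.d.\ conditional on $\boldd$, so the law-of-large-numbers limits for $\varrho_k$ and $\sigma_k$ follow clique-by-clique; the third-moment hypothesis in Condition \ref{cond} provides the uniform integrability needed to pass term-by-term to the limit and to verify the analogue of Condition \ref{cond}-\textit{(iii)} for the super-node sequence. A secondary technical point is switching between the uniform-simple-graph model $\Gr$ and the underlying configuration-model pairing, which under Condition \ref{cond} is standard but must be done carefully since all the super-node counts are measurable with respect to the pairing.
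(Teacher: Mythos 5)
Your proposal is correct and is essentially the paper's own proof: percolate inside the cliques first, collapse the percolated clique pieces into super-nodes (the paper's projection $\proj$), observe that the resulting graph is again a configuration model with degree distribution $\varrho_k/\varrho$ (Lemmas \ref{lem:diff_comp}--\ref{lem:diff_asymptotic}), apply the standard bond-percolation result for such graphs \cite{ja09} to identify the threshold $\pi\,\Eb[\cK(D^*+1,\pi,\boldg)-1]=1$ and the fixed point $\zeta$, and convert back to $\tGr$-vertices through the weights $\sigma_k/\tgamma$ to obtain $L(\zeta)$, handling the simple-graph versus multigraph issue by conditioning on simplicity. The only step the paper makes explicit that you leave implicit is Lemma \ref{lem:giant_comp} (a Cauchy--Schwarz bound using $\sum_i d_i^2=O(n)$) showing that a component which is $o_p(n)$ in super-node count is also $o_p(\tn)$ in $\tGr$-vertex count --- this is what identifies the giant of the super-node graph with the largest component of $G_\pi$ in case (i) and yields case (ii) --- but the moment condition you invoke supplies exactly this.
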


We can guess the value of the diffusion threshold $\pi_c$ using a branching process approximation (Appendix \ref{app:BP}).

Note that in the particular case where $\gamma_r=0$ for all $r$, we have
$\cK(d,\pi,0)=d$ so that we get $\pi_c=\frac{\Eb[D]}{\Eb[D(D-1)]}$ where
$D$ is the typical degree in the random graph and our result reduces to a
standard result in the random graphs literature (see Theorem 3.9 in
\cite{ja09}).

The case where a positive fraction of individuals belong to $S$ (not only a single node) is discussed in Subsection \ref{subs:diffA}. Now we study the effect of clustering on the diffusion with a single activation.

\subsection{Effect of clustering on the diffusion for regular graphs}

In this paragraph, we consider $d$-regular graphs ($d\geq 3$), so that $p_r=\mathds{1}(r=d)$ for all $r\geq0$. In this case, adding cliques does not change the asymptotic degree distribution in the graph $\tGr$, and $\tp_r=\mathds{1}(r=d)=p_r$. In addition, we assume that $\gamma_r=\gamma$ for all $r\geq 0$ (each vertex in $\Gr$ is replaced by a clique with probability $\gamma$). We are interested in the effect of clustering on the diffusion threshold $\pi_c$ on the one hand, and its effect on the epidemic size on the other hand. 

We have the following result for the diffusion threshold in regular graphs:
\begin{Proposition}
 Let $d\geq 3$. We consider the asymptotic degree distribution $\boldsymbol{\tp}=(\Indb_{r=d})_{r\geq 0}$. Let $0\leq C^{(1)}<C^{(2)}\leq C^{\max}=1-2/d$. For each $j=1,2$, let $(\boldd_j,\boldp_j,\gamma_j)$ be chosen according to Proposition \ref{prop:algo}, such that $\tG^{(j)}=\tG(n,\boldd_j,\gamma_j)$ has asymptotic degree sequence $\boldsymbol{\tp}$ and asymptotic clustering coefficient $C^{(j)}$. Let $\pi_c^{(j)}$ be the diffusion threshold defined in Theorem \ref{th:diff} for the random graph $\tG^{(j)}$, $j=1,2$. Then we have:
\begin{eqnarray*}
 \pi_c^{(1)}\leq \pi_c^{(2)}
\end{eqnarray*}
In our graph model, the diffusion threshold for a random $d$-regular graph increases as the clustering coefficient increases.
\end{Proposition}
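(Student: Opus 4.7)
The plan is to reduce the statement to showing that the diffusion threshold $\pi_c$, seen as a function of a single parameter $\gamma$, is a nondecreasing function. First observe that when the target asymptotic degree distribution is $\tilde{p}_r=\mathds{1}(r=d)$, formula \eqref{eq:p} forces $p_r=\mathds{1}(r=d)$ as well, so in each realization $\tG^{(j)}$ the underlying sequence $\boldd_j$ is itself $d$-regular and the only free parameter is $\gamma_j$. The expression \eqref{eq:C} collapses to
\[
C(\gamma)=\frac{(d-2)\gamma}{(d-1)\gamma+1},
\]
which is strictly increasing from $0$ to $C^{\max}=1-2/d$. Hence $C^{(1)}<C^{(2)}$ is equivalent to $\gamma_1<\gamma_2$, and it suffices to prove that $\gamma\mapsto\pi_c(\gamma)$ is nondecreasing on $[0,1]$.

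Next I would specialize the defining equation of Theorem~\ref{th:diff}. Since $D^*+1=d$ almost surely, the threshold equation reads $\pi\,\Eb[\cK(d,\pi,\gamma)-1]=1$. Writing $\mu(d,\pi):=\sum_{k=1}^{d}k\,f(d,k,\pi)$ for the expected size of the component containing vertex $1$ in $K_d(\pi)$, the definition of $\cK$ gives $\Eb[\cK(d,\pi,\gamma)]=(1-\gamma)d+\gamma\mu(d,\pi)$, and one rearranges to
\[
H(\pi,\gamma):=\pi\bigl[(d-1)-\gamma\bigl(d-\mu(d,\pi)\bigr)\bigr]=1.
\]
So $\pi_c(\gamma)$ is the solution in $\pi$ of $H(\pi,\gamma)=1$.

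The main analytic step is to establish the following two monotonicity properties. Firstly, for every $\pi\in(0,1)$ and $d\geq 3$, $H(\pi,\gamma)$ is strictly decreasing in $\gamma$: this reduces to showing $\mu(d,\pi)<d$, which holds because with positive probability $(1-\pi)^{d-1}>0$ vertex $1$ is isolated in $K_d(\pi)$, so the component of $1$ has size strictly less than $d$ with positive probability. Secondly, $H(\pi,\gamma)$ is increasing in $\pi$ for fixed $\gamma$: a standard coupling of $K_d(\pi_1)\subseteq K_d(\pi_2)$ for $\pi_1<\pi_2$ shows $\pi\mapsto\mu(d,\pi)$ is nondecreasing, and combined with $\gamma(d-\mu(d,\pi))\leq d-1$ this gives $\partial_\pi H\geq (d-1)(1-\gamma)+\pi\gamma\,\partial_\pi\mu\geq 0$. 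Since $H$ is continuous with $H(0,\gamma)=0$ and $H(1,\gamma)=d-1\geq 2$, a solution $\pi_c(\gamma)\in(0,1)$ exists, and by the two monotonicities, if $\gamma_1\leq\gamma_2$ then $H(\pi_c^{(1)},\gamma_2)\leq H(\pi_c^{(1)},\gamma_1)=1$, so $\pi_c^{(2)}\geq\pi_c^{(1)}$ as required.

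The main obstacle in carrying this out is checking uniqueness of $\pi_c$ and the strict part of the monotonicity in $\gamma$; both rely on the small but essential observation that bond percolation on $K_d$ with $\pi<1$ is disconnected with positive probability, so that $\mu(d,\pi)<d$. Everything else is continuity and a comparison argument, and no additional information about the form of $f(d,k,\pi)$ beyond the defining recursion \eqref{eq:deff} is needed.
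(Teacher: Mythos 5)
Your proposal is correct and follows essentially the same route as the paper: both reduce $C^{(1)}<C^{(2)}$ to $\gamma_1<\gamma_2$ via the explicit formula $C(\gamma)=\frac{(d-2)\gamma}{(d-1)\gamma+1}$, and both rest on the key observation that the mean component size $\sum_{k}k\,f(d,k,\pi)$ of a vertex in $K_d(\pi)$ is at most $d$. The only difference is presentational: the paper recasts the threshold equation as a fixed point $\pi=F(\pi,\gamma)$ and compares the two curves with the first bisector, whereas you keep $H(\pi,\gamma)=1$ and use monotonicity of $H$ in $\pi$ and $\gamma$, which plays exactly the same role (and your strict bound $\mu(d,\pi)<d$ is a mild sharpening of the paper's non-strict one).
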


\begin{proof}
Both notions of clustering coefficient considered in \ref{subs:clustering} are the same for random $d$-regular graphs, and we have that: $C(\gamma)=\frac{d-2}{d-1+1/\gamma}$. In particular, $C^{(1)}<C^{(2)}$ implies that $\gamma_1< \gamma_2$.

According to Theorem \ref{th:diff}, the diffusion threshold $\pi_c^{(j)}$, $j=1,2$, is the solution of the following equation:
\begin{eqnarray}\label{eq:pi_d_reg}
\pi \Eb\left[ \cK(d,\pi,\gamma_j)-1\right]=1.
\end{eqnarray}
Using the definition of $\cK$, equation \eqref{eq:pi_d_reg} becomes $\pi=F(\pi,\gamma_j)$, where
\[
F: \left\{\begin{array}{rcl}
    [0,1]^2 & \longrightarrow & [0,1] \\
    (\pi,\gamma) & \mapsto & \frac{1}{d-1+\gamma\left(\sum_{k=1}^d k f(d,k,\pi)-d\right)}.
    \end{array} \right.
\]

Let $\pi\in[0,1]$. Then $F(\pi,\gamma_1)\leq F(\pi,\gamma_2)$ since $\gamma_1<\gamma_2$ and $\sum_{k=1}^d k f(d,k,\pi)\leq d$. Indeed, $f(d,k,\pi)$, $k\leq d$, is the probability that the connected component of a vertex inside $K_d(\pi)$ has $k$ vertices in $K_d(\pi)$. Hence $\sum_{k=1}^d k f(d,k,\pi)$ is the mean size of that component inside $K_d(\pi)$, so that $\sum_{k=1}^d k f(d,k,\pi)\leq d$.

Therefore the curve $\Gamma_2$ of $\pi \mapsto F(\pi,\gamma_2)$ is above the curve $\Gamma_1$ of $\pi \mapsto F(\pi,\gamma_1)$. Let $A_1$ (resp. $A_2$) be the intersection between $\Gamma_1$ (resp. $\Gamma_2$) and the first bisector. Both functions are continuous, thus the first coordinate of $A_1$ is less than or equal to the one of $A_2$, that is to say $\pi_c^{(1)}\leq \pi_c^{(2)}$.
\end{proof}

\begin{figure}
\centering
\epsfig{file=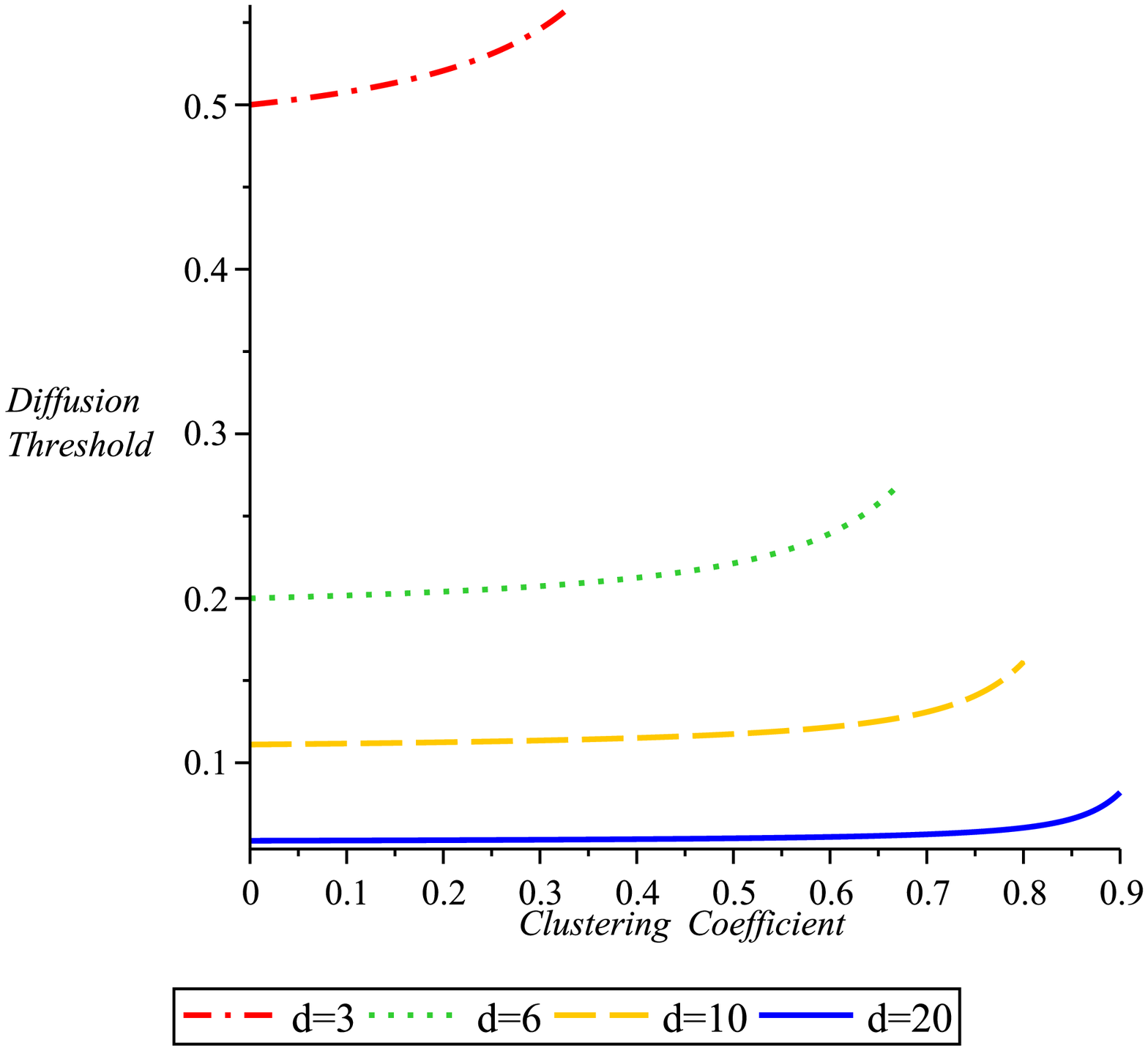, width=6.3cm}
\epsfig{file=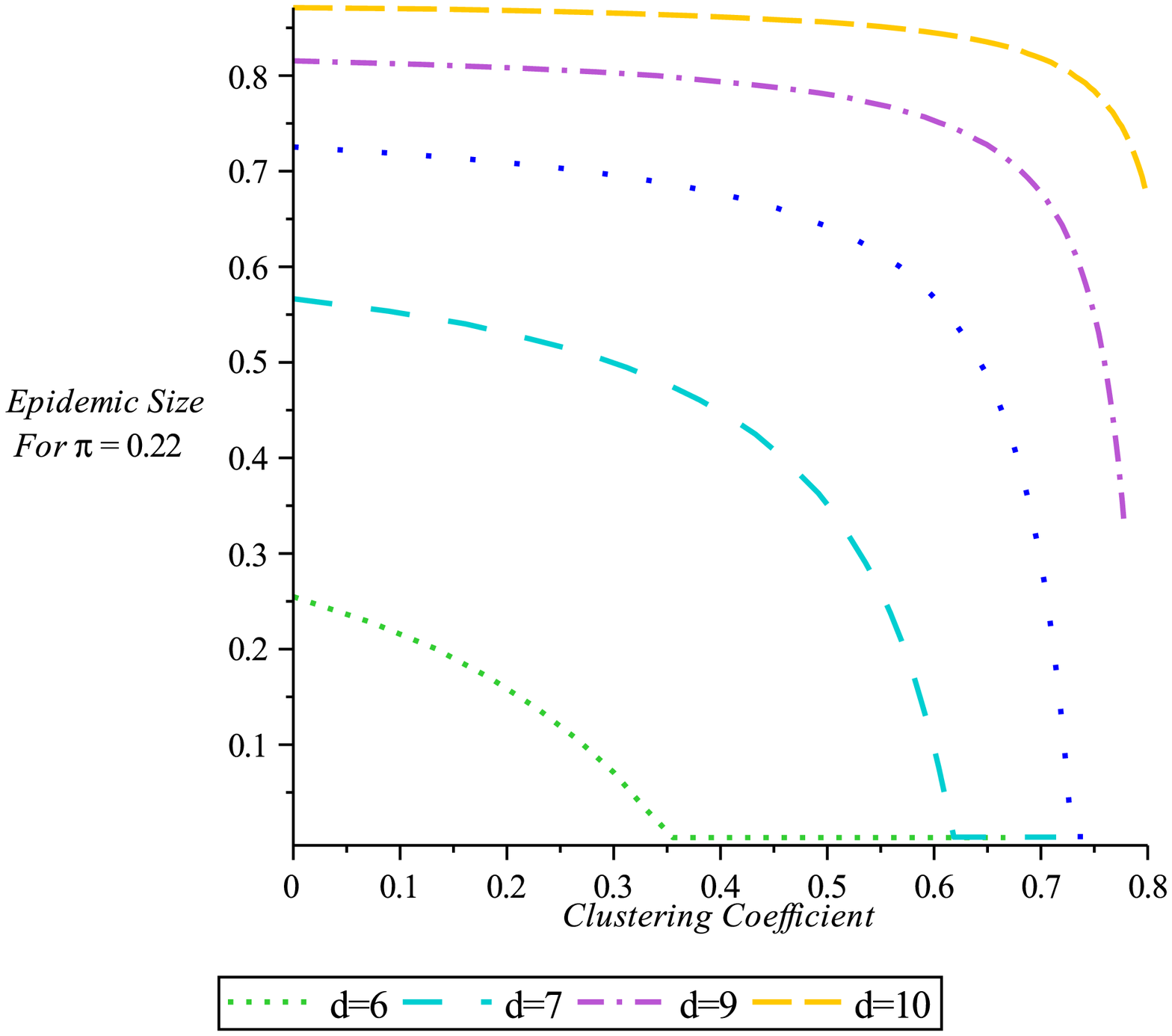, width=6.3cm}
\caption{On the left: Evolution of the diffusion threshold with respect to the clustering coefficient for $d$-regular graphs. On the right: Evolution of the epidemic size with respect to the clustering coefficient for $d$-regular graphs (with infection probability $\pi=0.22$).}
\label{fig:Diff2D}
\end{figure}

More precisely, Figure \ref{fig:Diff2D} (on the left) shows how the diffusion threshold increases with the clustering coefficient, for different values of $d$: in other words, clustering decreases the range of $\pi$ ($\pi\in(\pi_c;1]$) for which a single individual can turn a positive proportion of the population into infected individuals.

In addition, the epidemic size also decreases with the clustering: in Figure \ref{fig:Diff2D} (on the right), we plot the ratio of the largest connected component in the percolated graph over the whole population. When the starting infected individual is a vertex chosen uniformly at random, this ratio also corresponds to the probability of explosion. Hence, as the clustering increases, it 'inhibits' the diffusion process. These results are in accordance with \cite{GleesonMH:clustering}.

These results are intuitive (for $d$-regular graphs) in the sense that the removal of edges inside cliques can stop the diffusion \textit{inside} a clique in the graph $\tGr$, while this phenomenon does not occur in the original graph $\Gr$.

\subsection{Effect of clustering on the diffusion for graphs with power law degree distribution}

\begin{figure}
\centering
\epsfig{file=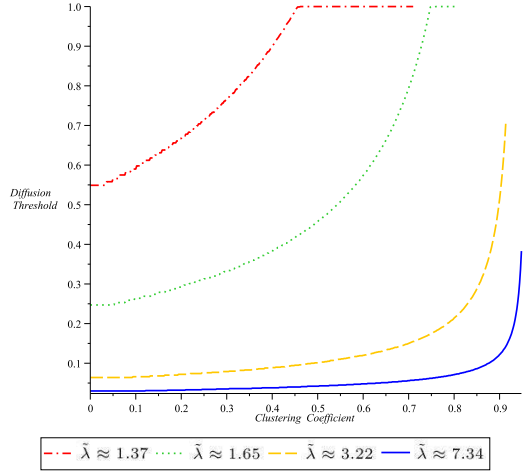, width=7cm}
\caption{Evolution of the diffusion threshold with respect to the clustering coefficient in a graph with mean degree $\tl$, with respect to the clustering coefficient $C$ (for a fixed power law degree distribution). }
\label{fig:DiffPL}
\end{figure}

In Figure \ref{fig:DiffPL}, we consider a power law degree distribution with exponential cutoff: $\tp_r \propto r^{-\tau} e^{-r/50}$, with parameters $\tau=2.9$, $\tau=2.5$, $\tau=1.81$, $\tau=1.3$, so that the mean degree is respectively $\tl\approx 1.37$, $\tl\approx 1.65$, $\tl\approx 3.22$, $\tl\approx 7.3$. We plot the diffusion threshold $\pi_c$ for the graph given by Proposition \ref{prop:algo}, when the degree distribution is $\boldsymbol{\tp}$ and the clustering coefficient varies from $0$ to $C^{\max}$. We observe the same phenomenon as the one we proved for $d$-regular graphs, \ie clustering decreases the range of $\pi$ ($\pi\in(\pi_c;1]$) for which a single individual can turn a positive proportion of the population into infected individuals.


\subsection{Phase transition for the diffusion with degree based activation} \label{subs:diffA}

In this subsection, we allow a positive fraction of nodes to be active at the beginning of the diffusion process. More precisely, on a given graph $G$, the set $S$ of initial active nodes is random, and each node of degree $d$ in $G$ belongs to $S$ with some probability $\alpha_d> 0$, independently for each node. We set $\balpha=(\alpha_d)_{d\geq 0}$. 


Using the notation $\tgamma$ defined in Proposition \ref{prop:deg}, and definitions \eqref{def:rhok} to \eqref{def:mu}, we define (omitting the dependence on $\balpha$, $\boldp$ and $\boldg$):

\begin{eqnarray}
L(z)&:=&\sum_s \frac{(1-\gamma_s)p_s}{\tgamma}\left[1-(1-\alpha_s)(1-\pi+\pi \zeta)^s\right] \nonumber \\
&& + \sum_{d\geq s} d f(d,s,\pi) \frac{\gamma_d p_d}{\tgamma} \left[1-(1-\alpha_d)^s(1-\pi+\pi \zeta)^s\right] \nonumber \\
 h(z)&:=& \sum_s s(1-\gamma_s)p_s\left[1-(1-\alpha_s)(1-\pi+\pi \zeta)^s\right]/\varrho \nonumber \\
&& + \sum_{d\geq s} d f(d,s,\pi) \gamma_d p_d \left[1-(1-\alpha_d)^s(1-\pi+\pi \zeta)^s\right]/\varrho \nonumber \\
 \label{eq:zetaDiffA} \zeta &:=&\sup\{\, z\in[0,1):\mu z(1-\pi+\pi z)=h(z)\}.
\end{eqnarray}

\begin{theorem}\label{th:diffA}
Consider the random graph $G=\tGr$ for a sequence $\boldd$ satisfying
Condition \ref{cond} with probability distribution
$\boldp=\pr$, and clustering parameter $\boldg=\cd$. We are given an activation set $S$ drawn according to the distribution $\balpha$.
Then we have, for the diffusion model defined in \ref{subs:def_diff}: if $\zeta=0$, or if $\zeta\in(0,1]$, and further $\zeta$ is such that there exists $\eps>0$ with $\lambda z(1-\pi+\pi z)<h(z)$ for $z\in (\zeta-\eps,\zeta)$, then we have 
that the size $C^{b}(\pi,\balpha)$ of the active nodes at the end of the diffusion verifies:
\begin{eqnarray*}
 C^{b}(\pi,\balpha)/\tn &\overset{p}{\longrightarrow}& L(\zeta).
\end{eqnarray*}
\end{theorem}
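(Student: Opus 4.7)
The strategy is to lift Theorem~\ref{th:diff} from a single initial infection to a random activation set~$S$ by realizing the final active set as the union of the percolation clusters in~$(\tGr)_\pi$ that intersect~$S$, and redoing the branching-process analysis with an extra seed-thinning layer. As in Subsection~\ref{subs:def_diff}, a vertex $v$ is inactive iff its percolation cluster $\mathcal{K}(v)$ in $(\tGr)_\pi$ is disjoint from~$S$; since seeds are drawn independently with probabilities $\alpha_{d_w}$,
\begin{equation*}
\Pb\bigl(v\text{ inactive}\mid (\tGr)_\pi\bigr)=\prod_{w\in\mathcal{K}(v)}(1-\alpha_{d_w}),
\end{equation*}
so $C^b(\pi,\balpha)=\tn-\sum_v \Indb\{\mathcal{K}(v)\cap S=\emptyset\}$ and the problem reduces to computing the asymptotic probability that a uniform vertex has a seed-free cluster.

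I would then carry out the standard local exploration on $(\tGr)_\pi$ from a uniform vertex, separating external half-edges from clique-internal percolation. An external half-edge is open with probability $\pi$; conditionally on opening, its partner is by size-biasing either a singleton vertex of degree $s$ (with probability proportional to $s(1-\gamma_s)p_s$) or sits in a clique of size $d$ (with probability proportional to $d\gamma_d p_d$); in the latter case the reached subcomponent inside $K_d(\pi)$ has size $s$ with probability $f(d,s,\pi)$, each of whose $s$ vertices carries exactly one further external half-edge. The aggregates $\sigma_s,\varrho_s,\varrho,\mu$ are precisely the bookkeeping for these ingredients at the level of ``super-nodes'' obtained by contracting each percolated clique. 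Let $z$ be the probability that the far end of an open external half-edge has no seed among its descendants; then each reached vertex contributes a factor $(1-\alpha_d)$ (for not being a seed itself) times $(1-\pi+\pi z)^{\textrm{ext.\ exits}}$ (for each outgoing half-edge being closed or leading to a seed-free subtree). Weighting over singleton vs.\ clique endpoints and over the clique-subcomponent distribution $f(d,s,\pi)$ produces exactly the recursion $\mu z(1-\pi+\pi z)=h(z)$ of~(\ref{eq:zetaDiffA}) whose largest root is $\zeta$, while running the same calculation for a uniform \emph{vertex} (which has no parent half-edge to condition on) yields the formula $L(\zeta)$ for the asymptotic density of active vertices.

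The final step is to convert this local computation into the in-probability convergence $C^b(\pi,\balpha)/\tn\to_p L(\zeta)$. For this I would reuse the machinery of Theorem~\ref{th:diff}: contract each percolated clique into a super-node, producing an auxiliary configuration-model graph on super-nodes whose degree sequence satisfies Condition~\ref{cond} with parameters encoded by the $\sigma_s$, then apply the Molloy--Reed / Janson--Luczak giant-component theory (e.g., Theorem~3.9 in~\cite{ja09}) together with the independent $\balpha$-thinning at the vertex level. The hypothesis $\mu z(1-\pi+\pi z)<h(z)$ on $(\zeta-\eps,\zeta)$ is the standard ``crossing'' condition ensuring that $\zeta$ is the stable root actually reached by the exploration rather than a degenerate tangent fixed point; the complementary case $\zeta=0$ is subcritical and handled by a direct first-moment bound combined with a law-of-large-numbers computation, since the activated mass then comes only from $O(1)$-sized components touched by seeds. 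I expect the main obstacle to be not the algebra, which largely mirrors Theorem~\ref{th:diff}, but the careful coupling between the seed-marked local exploration and the global super-node configuration model, ensuring that the independent $\balpha$-thinning is realized consistently at both scales.
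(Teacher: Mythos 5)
Your overall architecture matches the paper's up to the last step: the paper also reduces to the supernode graph $G'=\projj(\tG_{\pi}^{(1)})$ obtained by percolating only the clique-internal edges and contracting (Step 1 of the proof of Theorem \ref{th:diff}, reused verbatim, with degree distribution $p'_k=\varrho_k/\varrho$), and it also uses the fact that activations are independent per vertex inside a clique, so a clique-derived supernode of external degree $s$ and original degree $d$ is seed-free with probability $(1-\alpha_d)^s$; your exploration heuristic and your reading of the stability condition on $\zeta$ are consistent with \eqref{eq:zetaDiffA} and with the final weighting of clique supernodes by their size $s$ that produces $L(\zeta)$.

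The genuine gap is in the tool you invoke to convert the local computation into convergence in probability. With a positive-density seed, the final active set is the union of \emph{all} percolation clusters of $G'_\pi$ meeting $S$, not the giant component, so Theorem 3.9 of \cite{ja09} ``together with $\balpha$-thinning'' does not give the needed statement: that theorem controls only the largest component (and says the others are $o_p(n)$), whereas here one must show that the empirical density of vertices lying in seed-free clusters concentrates at the fixed-point value, which requires controlling the joint statistics of small clusters and their degrees (or a second-moment/exploration argument at the global scale). This is precisely the step you flag as ``the main obstacle'' and leave open. The paper closes it by applying a slight extension of Theorem 10 of \cite{lel:diff} to $G'$ (the diffusion there being the threshold model with $t_{s\ell}=\Indb_{\{\ell=0\}}$ after bond percolation), which is a result tailored to degree-based activation on configuration models and directly yields the limits for $\nu^0_s/n'$ and $\nu^1_{ds}/n'$, after which one multiplies $\nu^1_{ds}$ by $s$ and uses $n'/n\to_p\varrho$, $\tn/n\to_p\tgamma$. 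To complete your proof you would either need to cite such an activation result or supply the concentration argument yourself; as written, the decisive step is asserted rather than proved.
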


Heuristically, taking $\alpha_s=0$ for all $s$ in the definitions of the previous theorem allows to recover the result of Theorem \ref{th:diff}.

\section{Symmetric threshold model for random graphs with clustering}\label{sec:cont}

\subsection{Symmetric threshold model} \label{subs:def_cont}

We now describe the symmetric threshold model on a finite graph $G=(V,E)$, with given thresholds $k(v)$, for $v\in V$. 
The progressive dynamics of the epidemic on the finite graph $G$ operates as follows: some
set of nodes $S$ starts out being active; all other
nodes are inactive.
Time operates in discrete steps $t=1,2,3,\dots$. At
a given time $t$, any inactive node $v$ becomes
active if its number of active neighbors is at least $k(v)+1$.
This in turn may cause other nodes to become active.
It is easy to see that the final set of active nodes (after $n$ time
steps if the network is of size $n$) only depends on the initial set
$S$ (and not on the order of the activations) and can be obtained
as follows: set $Y_v=\Indb(v\in S)$ for all $v$.
Then as long as there exists $v$ such that
$\sum_{w\sim v}Y_w> k(v)$, set $Y_v=1$, where $w\sim v$
  means that $v$ and $w$ share an edge in $G$.
When this algorithm finishes, the final state of node $v$ is
represented by $Y_v$: $Y_v=1$ if node $v$ is active and $Y_v=0$
otherwise. In this paper, we do not analyze the dynamics of the epidemics and concentrate on the final state only.

We allow the threshold $k(v)$ of a node $v$ to be a random variable with distribution depending on the degree of $v$, and such that thresholds are independent among nodes. More precisely, for each $s\geq 0$, let $(t_{s\ell})_{0\leq \ell\leq s}$ be a probability distribution. We draw independent thresholds $k(i)$, for $i\in V$. Knowing that the degree $d_i$ of node $i$ is $s$, threshold $k(i)$ is drawn according to the conditional probability distribution $(t_{s\ell})_{0\leq \ell\leq s}$: $\Pb(k(i)=\ell|d_i=s)=t_{s\ell}$. We say that random thresholds $\boldk=(k(i))_{i\in V}$ are drawn according to $\boldt={(t_{s\ell})}_{s,\ell}$.

To simplify, we define an adaptation of the symmetric threshold model for the random graph $\tGr$: we draw random thresholds $\boldk$ for each vertex $i$ in the original graph $\Gr$. When a vertex $i$ of $\Gr$ is replaced by a clique in $\tGr$, we associate to each vertex inside the clique the original threshold $k(i)$, so that vertices inside a clique have the same threshold (also referred to as the ``threshold of the clique''). We still denote by $k(v)$ the threshold of a vertex $v$ in $\tGr$.




\subsection{Phase transition for the symmetric threshold model with a single activation}

\label{subs:contagion}

We show that there is a phase transition for a single active node to turn a positive fraction of the population into active nodes.
For a graph $G=(V,E)$ and thresholds $\boldk=(k(v))_{v\in V}$, we consider the largest connected
component of the induced subgraph in which we keep only vertices of threshold zero. We call the vertices in this component pivotal
players: if only one pivotal player becomes active then the
whole set of pivotal players will eventually become active. In particular, if the set of pivotal players is large (\ie of order $\Theta(|V|)$, as $|V|\to\infty$), then a single pivotal player $u$ can trigger a global cascade (\ie the number of active nodes at the end of the epidemic starting from $u$ is of order $\Theta(|V|)$).  

We consider the random graph $G=\tGr$, and random thresholds drawn according to the distribution $\boldt$. For a node $v$, we denote by $C(v,\boldt)$ the final number of active vertices, when the initial state consists of only $v$ active and all
other nodes are inactive. Informally, we say that $C(v,\boldt)$ is the
size of the cascade induced by node $v$; if $C(v,\boldt)=\Theta_p(\tn)$, we say that node $v$ can trigger a global cascade.

Using the notation $\tgamma$ defined in Proposition \ref{prop:deg} and the binomial probabilities $b_{sr}(p)$ defined at the end of Section \ref{sec:intro}, we set (omitting the dependence on $\boldt$, $\boldp$ and $\boldg$):
\begin{eqnarray}
L(z)&:=&\sum_{s} \frac{\left[s \gamma_s+(1-\gamma_s)\right] p_s}{\tgamma} t_{s0}(1-z^{s}) \nonumber  \\ && +\sum_{s} \frac{(1-\gamma_s) p_s}{\tgamma} \left(1-t_{s0}-\sum_{\ell\neq 0}t_{s\ell}\sum_{r\geq s-\ell}b_{sr}(\zeta)\right), \nonumber  \\
 h(z)&:=&\sum_{s} sp_s \left[t_{s0}z^{s}+\gamma_s(1-t_{s0})z\right] \nonumber  \\ && +\sum_{s}p_s (1-\gamma_s)\sum_{s\geq \ell\neq 0}t_{s\ell}\sum_{r\geq s-\ell} r b_{sr}(z),\nonumber \\
 \label{eq:zetaCont} \zeta &:=&\sup\{\, z\in[0,1):\lambda z^2=h(z)\}.
\end{eqnarray}

\begin{theorem}
\label{th:contagion}
Consider the random graph $\tGr$ for a sequence $\boldd$ satisfying
Condition \ref{cond} with probability distribution $\boldp=\pr$, and clustering parameter $\boldg=\cd$. Let $\boldt$ be a family of probability distributions, and $\boldk$ random thresholds drawn according to $\boldt$ in the original graph $\Gr$ (\ie if $i$ is a vertex in $\Gr$ replaced by a clique in $\tGr$, then all vertices in the clique have the same threshold $k(i)$).
We call the following condition the cascade condition:
\begin{eqnarray}\label{eq:casc}
\sum_{r} r(r-1) p_r t_{r0}> \sum_r  rp_r.
\end{eqnarray}

Let $\tcP^{(n)}$ be the set of pivotal players in $\tGr$.  
\begin{itemize}
 \item[(i)] If the cascade condition \eqref{eq:casc} is satisfied, then there is a unique $\xi\in(0,1)$ such that
\begin{eqnarray}\label{eq:xi_cont}
 \sum_{d} d p_d t_{d0} (1-\xi^{d-1}) = \lambda (1-\xi)
\end{eqnarray}
and we have:
\begin{eqnarray}\label{eq:pivotal_players}
\frac{|\tcP^{(n)}|}{\tn} \overset{p}{\longrightarrow} \sum_{d} \frac{ \left[d \gamma_d+(1-\gamma_d)\right] p_d t_{d0}}{\tgamma} (1-\xi^{d})>0,
\end{eqnarray}
where $\tgamma$ is defined in Proposition \ref{prop:deg}. Moreover, for any $u\in\tcP^{(n)}$, we have whp
\begin{eqnarray}\label{eq:pivotal_players_1}
 \liminf \frac{C(u,\boldt)}{\tn} \geq L(\zeta)>0
\end{eqnarray}
where $\zeta$ is defined by \eqref{eq:zetaCont}. If in addition $\zeta=0$ or $\zeta$ is such that there exists $\eps>0$ with $\lambda z^2<h(z)$ for $z\in (\zeta-\eps,\zeta)$, then we have for any $u\in\tcP^{(n)}$:
\begin{eqnarray}
\label{eq:pivotal_players_2} C(u,\boldt)/\tn &\overset{p}{\longrightarrow}& L(\zeta).
\end{eqnarray}
 \item[(ii)] If $\sum_{r} r(r-1) p_r t_{r0}< \sum_r  rp_r$, for a uniformly chosen player $u$, we have
   $C(u,\boldt)=o_p(\tn)$. The same result holds if $o(n)$ players are chosen
   uniformly at random.
\end{itemize}
\end{theorem}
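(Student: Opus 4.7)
The plan is to reduce the symmetric threshold dynamics on $\tGr$ to an analysis on the underlying $\Gr$, exploiting a sharp dichotomy in how cliques behave under the cascade. A clique of size $d$ replacing a vertex $i$ of $\Gr$ with shared threshold $k(i)$ either fully activates as soon as any one of its vertices receives an external activation (if $k(i)=0$, since then each clique vertex has threshold $0$ and an active internal neighbor once one member lights up) or remains entirely inert throughout the cascade (if $k(i)\geq 1$, because every clique vertex has exactly one external half-edge and thus can never accumulate $\geq k(i)+1$ active neighbors to start an internal chain). This structural dichotomy is precisely the reason why the cascade condition \eqref{eq:casc} and the equation \eqref{eq:xi_cont} only involve $t_{d,0}$, and it is what makes a clique-aware exploration tractable.

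For the pivotal players, I would prove \eqref{eq:pivotal_players} by observing that a vertex of $\tGr$ is pivotal iff its parent in $\Gr$ has threshold $0$ and lies in the giant component of the threshold-$0$ induced subgraph of $\Gr$. This latter subgraph is a vertex-percolated configuration model, where each vertex of degree $d$ is kept independently with probability $t_{d,0}$. Standard Molloy--Reed / Janson--Luczak branching process arguments give a local progeny distribution with mean $\sum_{d} d(d-1) p_d t_{d,0}/\lambda$; supercriticality is exactly \eqref{eq:casc}, and the extinction probability $\xi$ satisfies \eqref{eq:xi_cont}. A vertex of degree $d$ with threshold $0$ belongs to the giant with asymptotic probability $1-\xi^d$; lifting to $\tGr$, it contributes $d$ pivotal vertices when $X(i)=1$ and a single one when $X(i)=0$, which after normalization by $\tgamma$ gives \eqref{eq:pivotal_players}. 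Because the pivotal set is threshold-$0$ and connected, a single active pivotal player eventually activates the entire pivotal set.

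The cascade-size statements \eqref{eq:pivotal_players_1} and \eqref{eq:pivotal_players_2} are obtained by adapting the half-edge exploration / fluid-limit argument of \cite{lel:diff}. The state variable $\zeta$ is the asymptotic probability that a uniformly chosen half-edge in $\Gr$ points to a vertex whose image in $\tGr$ is inactive at the end of the cascade. The fixed-point equation $\lambda \zeta^2 = h(\zeta)$ arises by matching flows of active versus inactive half-edges; the three contributions to $h$ correspond to the three fates of the endpoint vertex under the dichotomy above (threshold $0$, in a clique with threshold $\geq 1$ and hence inert, or a single vertex with threshold $\ell \geq 1$), while the binomial sums $\sum_{r\geq s-\ell} r\, b_{sr}(\zeta)$ weigh the half-edge contributions of single vertices that activate precisely when $\geq \ell+1$ of their $s$ neighbors are active. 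A parallel accounting over vertex-types, renormalized by $\tgamma$, yields the explicit formula for $L(\zeta)$. The lower bound \eqref{eq:pivotal_players_1} comes from truncating the exploration at finitely many generations and comparing with a supercritical Galton--Watson tree, and is robust to the exact location of $\zeta$; the sharp equality \eqref{eq:pivotal_players_2} requires the stability hypothesis $\lambda z^2 < h(z)$ on a left neighborhood of $\zeta$, which prevents the fluid trajectory from stalling at a smaller, unstable root.

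For part (ii), subcriticality of the above branching process implies that the threshold-$0$ induced subgraph of $\Gr$ has only components of size $o(n)$, so a single pivotal seed activates only $o(n)$ further pivotal players; non-pivotal single vertices need $\geq \ell+1 \geq 2$ active neighbors, which a subcritical exploration cannot deliver, and cliques with threshold $\geq 1$ remain inert by the dichotomy. A union bound extends the same conclusion to $o(n)$ uniformly random seeds. The main obstacle throughout is the rigorous justification of \eqref{eq:pivotal_players_2}: one has to couple the non-monotone symmetric-threshold rule to an exploration in which all half-edges attached to the same original $\Gr$-vertex are revealed together (so that the all-or-nothing clique effect for $k(i)=0$ and the inertness for $k(i)\geq 1$ are handled coherently), and then show that this clique-aware exploration concentrates around a deterministic fluid limit whose first non-trivial zero is exactly $\zeta$. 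The stability assumption on $\zeta$ is precisely the analytic condition ensuring that this first zero is attained in the $n\to\infty$ limit.
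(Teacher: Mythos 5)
Your structural dichotomy (threshold-zero cliques relay the epidemic, positive-threshold cliques are inert because each member has a single external edge) is exactly the paper's Lemma \ref{lem:cascade}, and your treatment of the pivotal set --- site percolation on $\Gr$ with retention probability $t_{d0}$, extinction probability $\xi$ solving \eqref{eq:xi_cont}, then lifting each surviving degree-$d$ vertex by the factor $d\gamma_d+(1-\gamma_d)$ and normalizing by $\tgamma$ --- is essentially the paper's proof of \eqref{eq:pivotal_players}. One point you gloss over there: it is not automatic that the largest threshold-zero component of $\tGr$ is the lift of the largest threshold-zero component of $\Gr$ (a small component downstairs could in principle inflate when cliques are added); the paper rules this out with a Cauchy--Schwarz estimate (Lemma \ref{lem:giant_comp}) relying on Condition \ref{cond}-(iii).

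For \eqref{eq:pivotal_players_1}--\eqref{eq:pivotal_players_2}, however, there is a genuine gap. You propose to redo the half-edge exploration / fluid-limit analysis directly on the clustered graph (a ``clique-aware exploration'') and you yourself identify its concentration as the ``main obstacle'' --- but that obstacle is the whole content of the statement, and you offer no device to overcome it. The paper never builds a new fluid limit: it couples the epidemic on $\tG$ started at $u$ with a symmetric threshold epidemic on the unclustered $\proj(\tG)$ (distributed as $\Gr$) started at the parent $i$, with the modified threshold distribution $\boldt'$ in which a degree-$s$ vertex keeps threshold $0$ with probability $t_{s0}$, keeps threshold $\ell$ with probability $(1-\gamma_s)t_{s\ell}$, and is assigned the blocking threshold $s$ with probability $\gamma_s(1-t_{s0})$ (Proposition \ref{prop:link_sym_thr_model}); the known results of \cite{lel:diff} (slightly extended) then give $C'_{s\ell}(i,\boldt')/n\to_p p_s t'_{s\ell}\bigl(1-\sum_{r\geq s-\ell}b_{sr}(\zeta)\bigr)$, and the counts are lifted back using $Y_s\to_p\gamma_s$ to yield $L(\zeta)$. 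Some such reduction (or the full re-derivation you did not carry out) is needed to turn your accounting for $h$ and $L$ into a proof. Your part (ii) is also too quick: the epidemic is not confined to the threshold-zero subgraph (positive-threshold single vertices do get activated once enough neighbors are active, and can then bridge distinct threshold-zero components), and the parent of a uniformly chosen vertex of $\tGr$ is size-biased towards clique-replaced vertices, so the subcritical result of \cite{lel:diff} for a uniform seed does not apply verbatim. The paper handles both issues via the sandwich $\proj(D(u,\boldt))\subset D(i,\boldt)$ (Proposition \ref{prop:cascade}), an application of Theorem 10 of \cite{lel:diff} with activation probabilities proportional to $d\gamma_d+1-\gamma_d$ to absorb the size bias (Lemma \ref{lem:leldiff}), and again Lemma \ref{lem:giant_comp} to convert $o_p(n)$ in the projection into $o_p(\tn)$ upstairs.
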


When $\gamma_r=0$ for all $r\geq 0$, Theorem \ref{th:contagion} corresponds to the result of
\cite{lel:diff}. When we add cliques in the graph, the effect on the epidemic can be described by the following lemma. 

\begin{Lemma}
 \label{lem:cascade}
We consider a clique in $\tGr$ where all vertices are inactive, and at least one of them has a neighbor outside the clique which is active. If the threshold $k$ of the (vertices in the) clique is zero, then the epidemic will propagate to the whole clique. On the contrary, if $k$ is positive, then the clique cannot become active, even if all neighbors outside are active.
\end{Lemma}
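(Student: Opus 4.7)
The plan is to exploit the very rigid structure of a clique inserted by our construction: by definition, a clique $K$ of size $d$ consists of $d$ vertices, each pair connected by an edge, and each vertex having exactly one edge leaving $K$. Thus every vertex of $K$ has exactly $d-1$ neighbors inside $K$ and exactly $1$ neighbor outside $K$. Moreover all vertices of $K$ share the same threshold $k$, by the construction described in Subsection \ref{subs:def_cont}. These two facts drive both halves of the lemma.

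For the first assertion (threshold $k=0$), I would argue by a two-step activation cascade. Let $v\in K$ be a clique vertex whose unique external neighbor $v'$ is active (such a $v$ exists by hypothesis). Since the rule for activation is ``at least $k+1=1$ active neighbors'', $v$ becomes active in the next step. Now fix any other vertex $w\in K$; by the clique structure, $v\sim w$, so $w$ now has at least one active neighbor (namely $v$), hence becomes active as well. Iterating (or simply observing that one round suffices for all remaining $d-1$ vertices), the entire clique becomes active.

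For the second assertion (threshold $k\geq 1$), I would argue by monotone induction that no vertex of $K$ can ever activate. At the initial time considered in the lemma, every vertex of $K$ has exactly one potentially active neighbor, its unique outside neighbor, and zero active neighbors inside $K$. Even in the worst case where \emph{all} external neighbors of $K$ are active, each $v\in K$ has at most $1$ active neighbor, which is strictly less than $k+1\geq 2$. Hence no vertex in $K$ satisfies the activation rule, so none becomes active at the next step. Since the set of active nodes outside $K$ can only grow but contributes at most $1$ active neighbor per $v\in K$, and no new active neighbor inside $K$ is ever created, this argument propagates to all subsequent steps. Formally, one shows by induction on $t$ that the active set restricted to $K$ remains empty at every time $t$, which then gives the conclusion.

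I do not anticipate any real obstacle; both statements follow directly from the fact that each clique vertex has exactly one external edge, combined with the monotone activation rule of Subsection \ref{subs:def_cont}. The only point worth stating carefully is the common-threshold convention inside a clique, so that the dichotomy ``$k=0$ vs.\ $k\geq 1$'' makes sense for the whole clique.
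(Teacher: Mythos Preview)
Your proposal is correct and follows essentially the same approach as the paper: the argument hinges on each clique vertex having exactly one external neighbor, so for $k=0$ a single external activation triggers the entry vertex and then the whole clique, while for $k\geq 1$ no clique vertex can ever reach the required $k+1\geq 2$ active neighbors. Your write-up is simply a more explicit version of the paper's brief justification.
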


Indeed, if $k=0$, each vertex in the clique needs only one active neighbor to become active. If $k>0$, each vertex in the clique needs at least two active neighbors to become active. Yet each vertex of the clique has only one (active) neighbor outside, other neighbors being (inactive ones) inside the clique.

Hence a clique with positive threshold in which all vertices are initially inactive will always stops the epidemic. This simple observation allows to make a comparison between the epidemic in the original graph $\Gr$ and the epidemic in the graph $\tGr$ with additional cliques: since cliques have a tendency to stop epidemic, it is also easy to see that if there is no global cascade in $\Gr$, then there is no one in $\tGr$ (more details are given in the proof, \ref{subs:pf_cont}).

The fact that the converse is also true is more remarkable, since the cliques with positive threshold stop the epidemic. In fact, those cliques will reduce the \textit{size} of the cascade, but they have no impact on the fact that cascade is possible or not. Indeed it is shown in \cite{lel:diff} that a global cascade is possible in the graph $\Gr$ if and only if the set of pivotal players (in $\Gr$) is large. Note that one direction of this equivalence is easy, and holds for any graph (in particular, this is still true for $\tGr$): if the set of pivotal players is large, any pivotal player which is initially active can trigger a global cascade, as explained at the beginning of the subsection. Let us assume now that there is a global cascade in $\Gr$. Using the equivalence shown in \cite{lel:diff}, the set of pivotal players (in $\Gr$) is large. This implies that the set of pivotal players in the graph $\tGr$ (with additional cliques) is also large (details are given in the proof, \ref{subs:pf_cont}), so that there is a cascade in the graph $\tGr$.

Hence there is a global cascade in $\tGr$ if and only if there is one in the original graph $\Gr$. This explains why the cascade condition \eqref{eq:casc} only depends on the original distribution $\boldp$ and threshold distribution $\boldt$ (and not on $\boldg$). Yet these two graphs ($\Gr$ and $\tGr$) have not the same asymptotic degree distribution. 
What is interesting now is to compare two graphs that have the same asymptotic degree distribution $\boldsymbol{\tp}=(\tp_r)_{r\geq 0}$, but different clustering coefficients.

\subsection{Effect of clustering on the contagion threshold} \label{subs:contagion_clust}

We use our results to highlight the effect of clustering for the game-theoretic contagion model proposed by Blume \cite{bl95} and Morris \cite{mor}. 
Consider a graph $G$ in which the nodes are the individuals in the population and there is an edge $(i,j)$ if $i$ and $j$ can interact with each other. Each node has a
choice between two possible actions labeled $A$ and $B$. On each
edge $(i,j)$, there is an incentive for $i$ and $j$ to have their
actions match, which is modeled as the following coordination game
parametrized by a real number $q\in (0,1)$:
if $i$ and $j$ choose $A$ (resp. $B$), they each receive a payoff of
$q$ (resp. $(1-q)$); if they choose opposite actions, then they
receive a payoff of $0$.
Then the total payoff of a player is the sum of the payoffs with each
of her neighbors. If the degree of node $i$ is $d_i$ and $S_i^B$ is
 the number of its neighbors playing $B$, then the payoff to $i$ from
choosing $A$ is $q(d_i-S_i^B)$ while the payoff from choosing $B$ is
$(1-q)S^B_i$. Hence, in a best-response dynamic, $i$ should adopt $B$ if $S_i^B>qd_i$ and $A$ if
$S_i^B\leq qd_i$.
A number of qualitative insights can be derived from such
a model even at this level of simplicity \cite{klein}. Specifically,
consider a network where all nodes initially play $A$.
If a small number of nodes are forced to adopt strategy $B$ (the seed) and we apply
best-response updates to other nodes in the network, then these nodes will be
repeatedly applying the following rule: switch to $B$ if enough of
your neighbors have already adopted $B$. There can be a cascading
sequence of nodes switching to $B$ such that a network-wide
equilibrium is reached in the limit. Note that the dynamics of the contagion process is deterministic once the seed is fixed as opposed to the diffusion process.
The contagion process is a particular case of the symmetric threshold model. Indeed the threshold distribution is given by:  $t_{s\ell}=\Indb(\lfloor qs \rfloor =\ell)$ for all $0\leq \ell\leq s$. The cascade condition \eqref{eq:casc} is satisfied if and only if the parameter $q$ of the contagion is greater than the contagion threshold 
\begin{eqnarray} \label{eq:qc}
q_c:=\sup \left\{q:\sum_{r<q^{-1}} r(r-1) p_r > \sum_r  rp_r\right\}. 
\end{eqnarray}


We restrict ourselves to the case where $\gamma_r=\gamma$ for all $r\geq 0$ and we use Proposition \ref{prop:algo} to construct two graphs with the same asymptotic degree distribution $\boldsymbol{\tp}$, one with a positive clustering coefficient, the other with no clustering. We then compare the contagion thresholds in these two graphs.

\begin{figure}
\centering
\epsfig{file=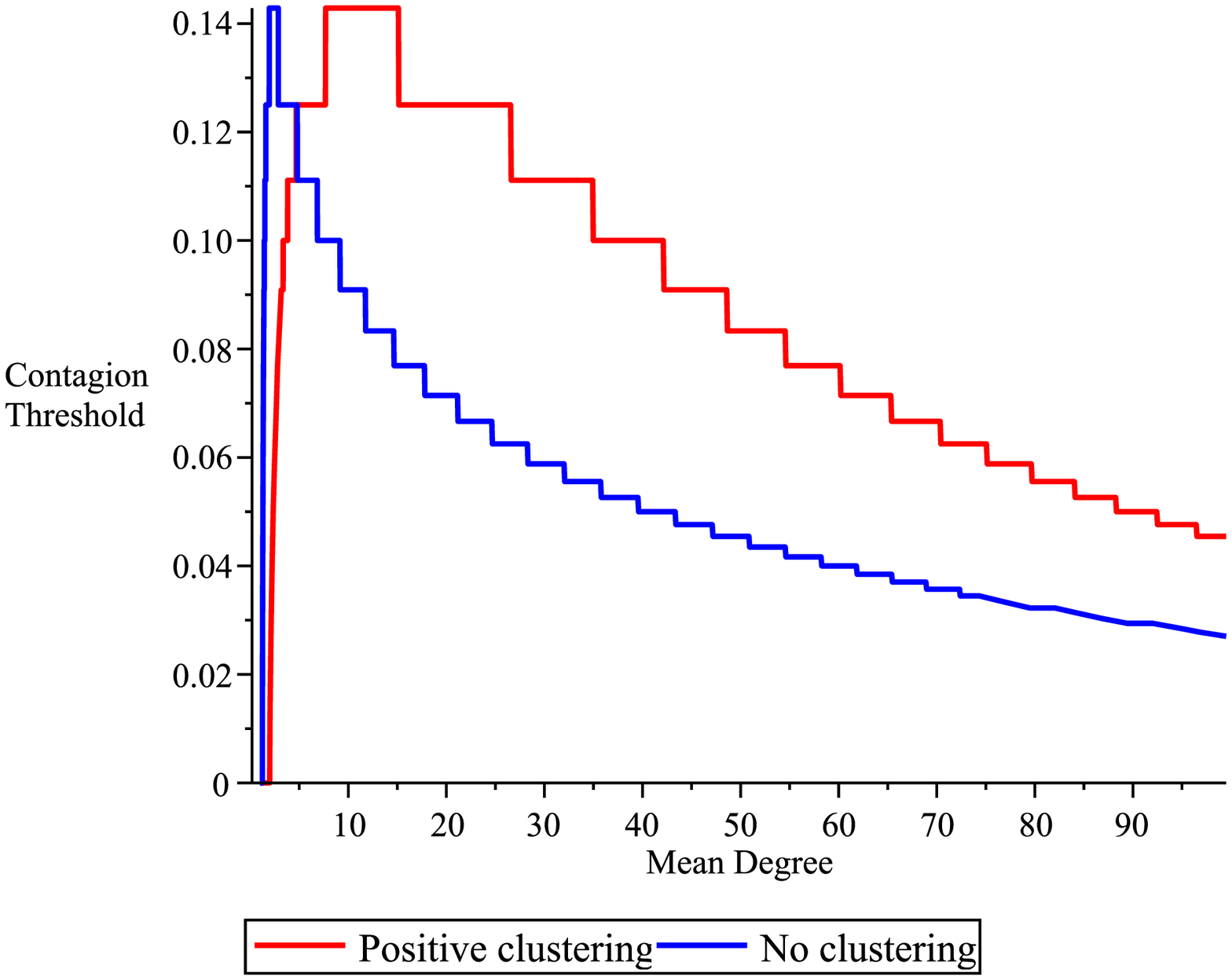, width=6cm}
\epsfig{file=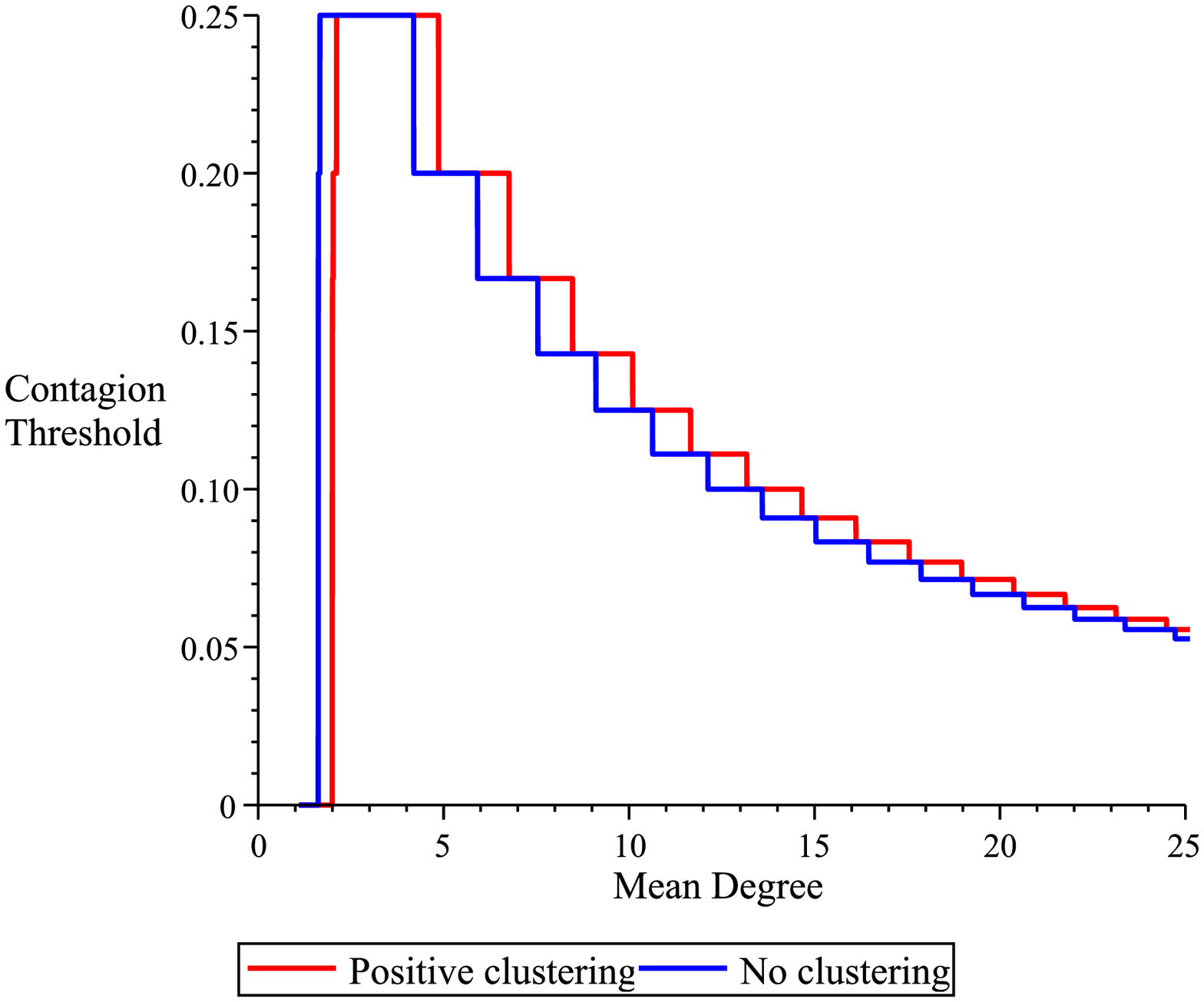, width=6cm}
\caption{On the left: Contagion thresholds in two graphs with the same degree distribution $\tilde{p}_r \propto r^{-\tau} e^{-r/50}$. On the right: Contagion thresholds in two graphs with the same degree distribution $\tilde{p}_r=e^{-\lambda} \lambda^{r-1}/(r-1)!$.}
\label{fig:ContPL}
\end{figure}

In Figure \ref{fig:ContPL} on the left, we consider a power law degree distribution with parameter $\tau>0$ and exponential cutoff: for all $r\geq 1$, $\tp_r \propto r^{-\tau} e^{-r/50}$. On the one hand, we consider (in red) the graph $G^{1}(\tau)$ for $C=C^{\max}$ (so that $\gamma=1$ and the distribution in the original graph is: $p_r\propto r^{-(\tau+1)} e^{-r/50}$). On the other hand, we consider (in blue) the graph $G^{0}(\tau)$ given by Proposition \ref{prop:algo} for $C=0$ (so that $\gamma=0$ and the distribution in the original graph is: $p_r=\tp_r$). In Figure \ref{fig:ContPL} (on the left), we make the parameter $\tau$ vary: the red (resp. blue) curve corresponds to the contagion threshold $q_c^{1}(\tau)$ (resp. $q_c^{0}(\tau)$) of the graph $G^{1}(\tau)$ (resp. $G^{0}(\tau)$), defined in \eqref{eq:qc}. Contagion thresholds are given with respect to the mean degree $\tl=\sum_r r\tp_r$ (that is a decreasing function of $\tau$).

In Figure \ref{fig:ContPL} on the right, we consider another form for the degree distribution $\boldsymbol{\tp}$: let $\lambda>0$, and set $\tilde{p}_r=e^{-\lambda} \lambda^{r-1}/(r-1)!$ for all $r\geq 1$. As before, we consider the graph $G^{1}(\lambda)$ for $C=C^{\max}$ ($\gamma=1$ and $\boldp$ is a Poisson distribution with parameter $\lambda$: $p_r=e^{-\lambda} \lambda^{r}/r!$), and the graph $G^{0}(\lambda)$ given by Proposition \ref{prop:algo} for $C=0$ ($\gamma=0$ and $p_r=\tp_r$). In Figure \ref{fig:ContPL} (on the right), we plot the contagion thresholds for these two graphs, with respect to the mean degree $\tl=\lambda+1$.

Both left and right-hand sides of Figure \ref{fig:ContPL} show that, when the mean degree $\tl$ of the graph is low, the contagion threshold $q_c^{0}$ of the graph with no clustering is greater than the threshold $q_c^{1}$ of the graph with positive clustering. Hence, if the parameter $q$ of the contagion process is in the interval $]q_c^{1},q_c^{0}[$, a global cascade is possible only in the graph with no clustering: in that case, the clustering 'inhibits' the contagion process. On the contrary, for high values of the mean degree, we have that $q_c^{0}<q_c^{1}$, so the clustering increases the range of parameter $q$ for which a global cascade is possible.

Let us fix the parameter $q\in(0,1)$ of the contagion ($q$ sufficiently low): this corresponds to a horizontal cut in Figure \ref{fig:ContPL}. The interval of mean degrees $\tl$ for which a global cascade is possible moves to the right when the clustering increases. Hence when the parameter of the contagion is fixed, clustering favors contagion processes on graphs with a higher mean degree.

Now we study more precisely what happens if we fix the mean degree in the graph (which corresponds to a vertical cut in Figure \ref{fig:ContPL}), and increase the clustering coefficient between $0$ and its maximal value $C^{\max}$ (even if we consider the first notion of clustering coefficient, the same phenomenon appears with the second notion).

\begin{figure}
\centering
\epsfig{file=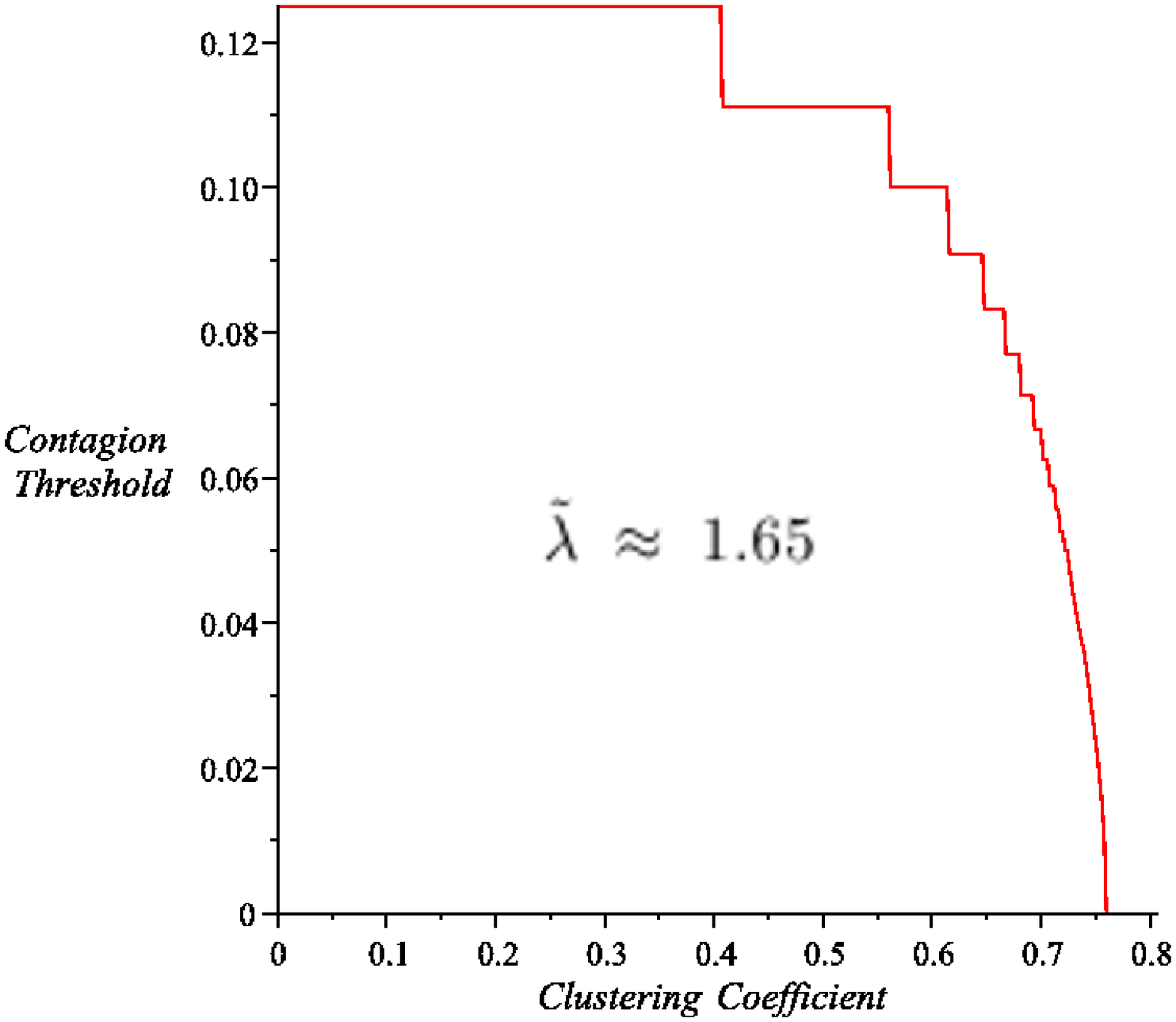, width=6cm}
\epsfig{file=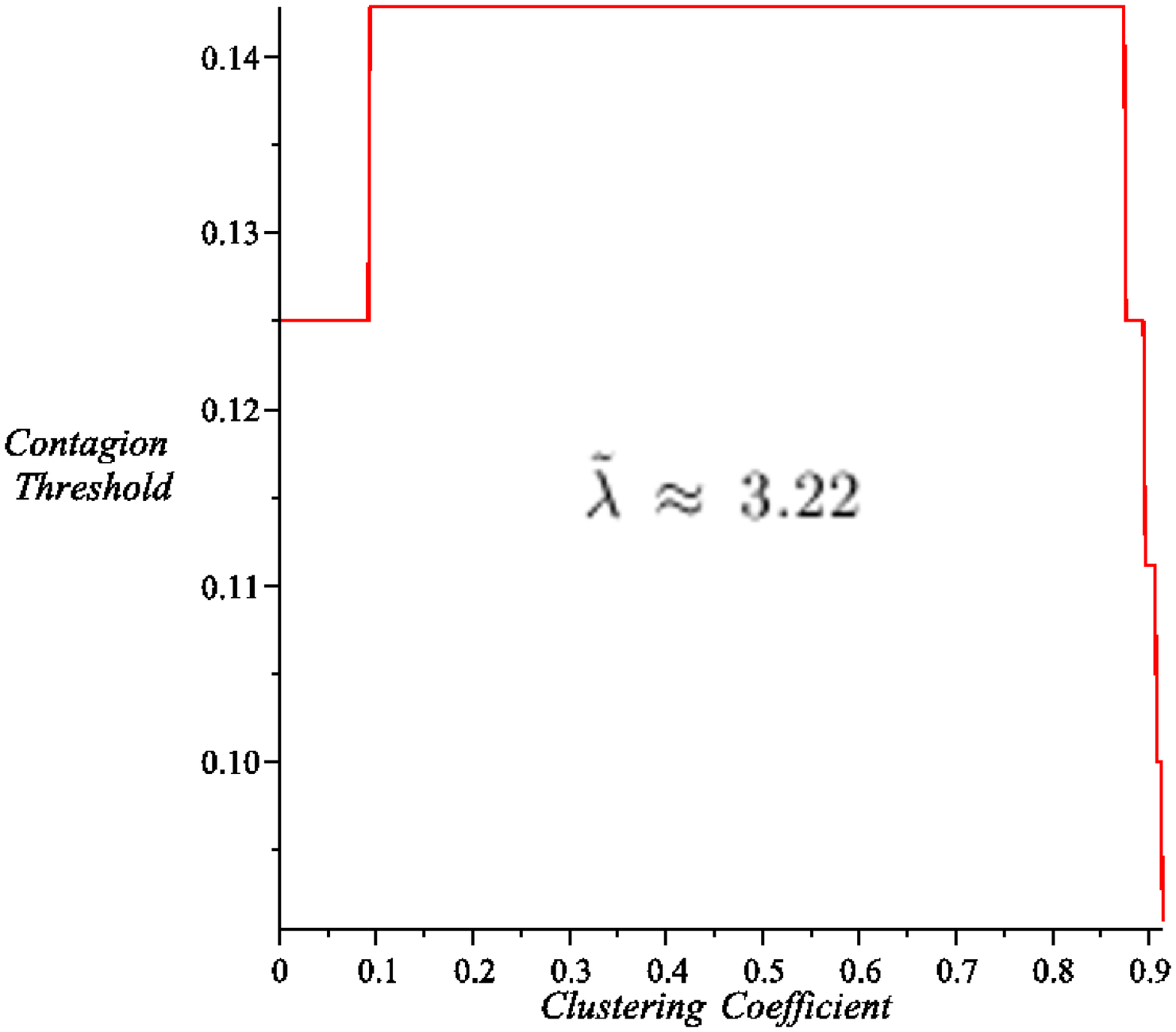, width=6cm}
\epsfig{file=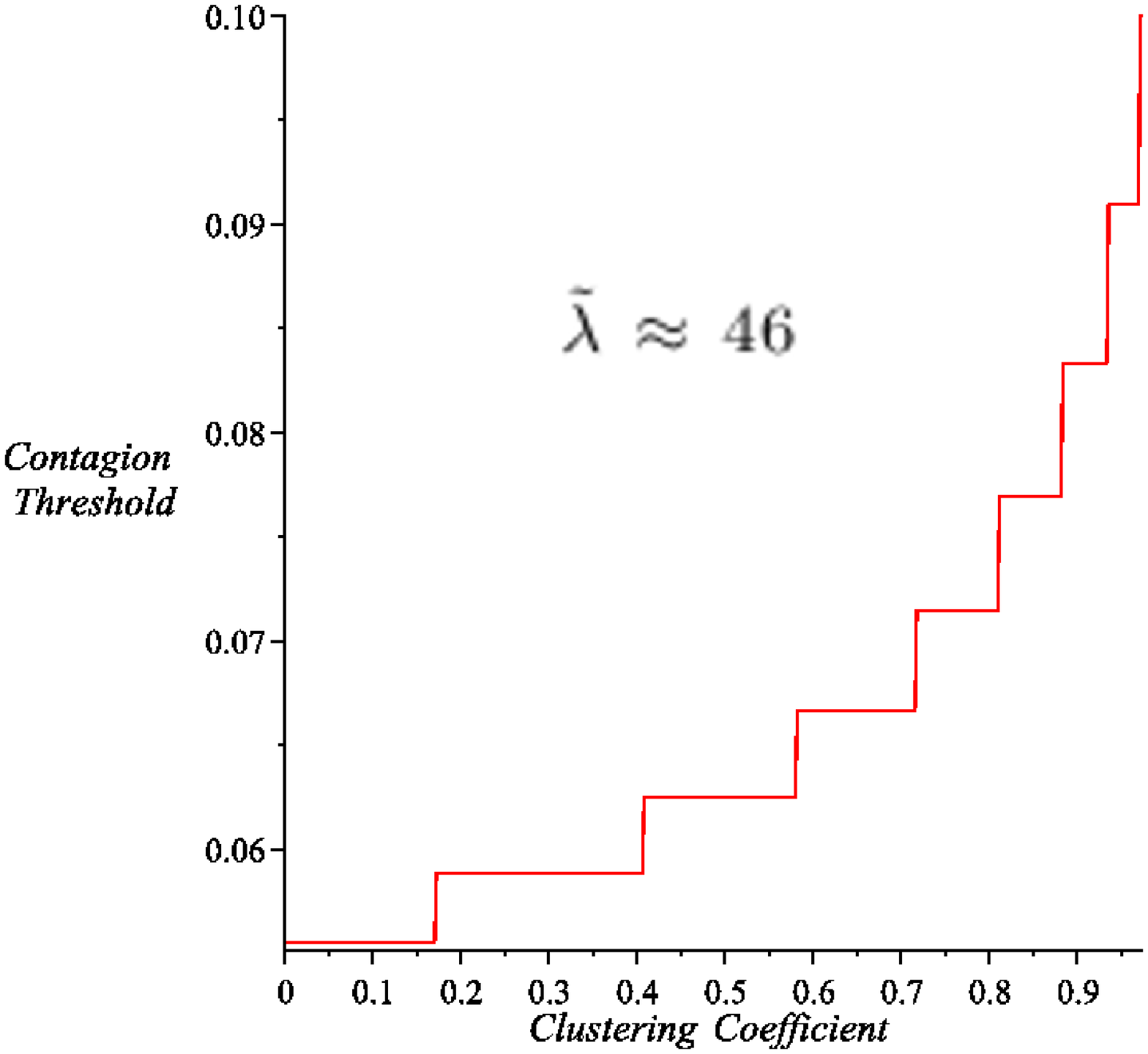, width=6cm}
\caption{Evolution of the contagion threshold in a graph with mean degree $\tl$ with respect to the clustering coefficient $C$ (for a fixed power law degree distribution).}
\label{fig:Cont0}
\end{figure}

In Figure \ref{fig:Cont0} on the top left corner (resp. top right corner, bottom), we consider a power law degree distribution with exponential cutoff: $\tp_r \propto r^{-\tau} e^{-r/50}$, with parameter $\tau=2.5$ (resp. $\tau=1.81$, $\tau=0.1$). We plot the contagion threshold $q_c$ for the graph given by Proposition \ref{prop:algo}, when the degree distribution is $\boldsymbol{\tp}$ and the clustering coefficient varies from $0$ to $C^{\max}$. We consider three different slices of Figure \ref{fig:ContPL} (left), and we go from the blue curve ($C=0$) to the red one ($C=C^{\max}$), progressively increasing the clustering coefficient. For a very low value of the mean degree ($\tl\approx 1.65$, top left corner of Figure \ref{fig:Cont0}), the contagion threshold decreases with the clustering. The opposite happens when the mean degree is very high ($\tl\approx 46$, bottom). In addition, for some intermediate values of the mean degree, as for $\tl\approx 3.22$ (top right corner), low values of the clustering 'helps' the contagion process, but, as the clustering coefficient becomes higher, the opposite happens: it 'inhibits' more and more the contagion process.

We see that the impact of clustering is different
for low values of the mean degree and for high values of the mean
degree. In the low values regime for the mean degree, the
contagion is more and more difficult, as the clustering increases. On the contrary, in the high values regime, the higher the clustering is, the more it 'helps' the contagion. When the value of the mean degree is exactly between these two cases, the effect of clustering is ambiguous: a low clustering coefficient 'helps' the contagion process, but a high one 'inhibits' the process.


\subsection{Effect of clustering on the cascade size for the contagion model}

We still consider the game-theoretic contagion model proposed by Morris \cite{mor} (described in the introduction), and the case where $\gamma_r=\gamma$ for all $r\geq 0$. In this subsection, the parameter $q\in(0,1)$ of the contagion process is fixed, and we want to highlight the effect of the clustering on the cascade size.

First we compare two graphs with the same asymptotic degree distribution $\boldsymbol{\tp}$, one having a positive clustering coefficient, the other having no clustering. In Figure \ref{fig:ContSize3}, we plot the sizes of the cascade and the pivotal players set for each of these graphs. 

\begin{figure}
\centering
\epsfig{file=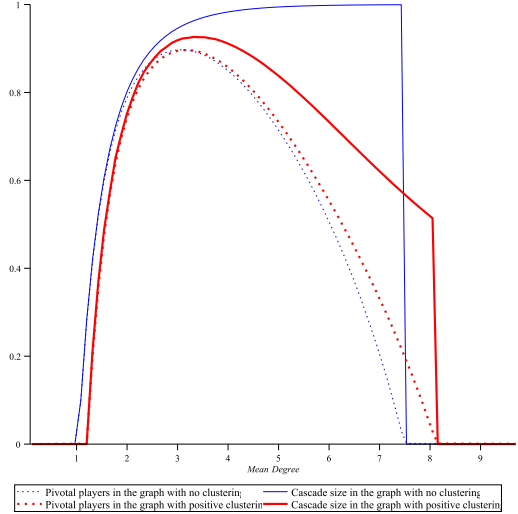, width=7cm}
\caption{Set of pivotal players and cascade sizes for $q=0.15$}
\label{fig:ContSize3}
\end{figure}

More precisely, in Figure \ref{fig:ContSize3}, we fix $q=0.15$. The red curves correspond to a graph with positive clustering, constructed as follows: we start from a Poisson distribution with parameter $\lambda$ for $\boldp$, and $\gamma=0.2$. This gives $\tilde{p}_r=\frac{0.2 r+0.8 }{0.2\lambda+0.8}\frac{e^{-\lambda}\lambda^r}{r!}$ and clustering coefficient $C=\frac{0.2\lambda}{0.2\lambda+1.2}>0$. The blue curves correspond to a graph with the same asymptotic distribution $\boldsymbol{\tp}$, but no clustering (in that case, $\boldp=\boldsymbol{\tp}$ and $\gamma=0$). We make the parameter $\lambda$ vary, and the sizes of the cascade (solid lines) and the pivotal players set (dot lines) are plot with respect to the mean degree $\tl=\sum_r r\tp_r$ in the graph. 

For each graph, we observe that there is a cascade if and only if the set of pivotal players is large, as explained in \ref{subs:contagion}. In addition, the interval of mean degrees $\tl$ for which a cascade is possible moves to the right when the clustering coefficient increases, which is consistent with our observations on Figure \ref{fig:ContPL}. Finally, we observe that the size of the cascade (when it exists) decreases with the clustering. This comes from the fact that cliques of degree $d\geq q^{-1}$ (\ie cliques with positive threshold) stop the contagion process (as explained in Lemma \ref{lem:cascade}). In the extremal case when $\gamma=1$ (each vertex of degree $d$ is replaced by a clique of size $d$), the cascade is exactly the set of pivotal players. When the probability $\gamma$ of replacing a vertex by a clique increases, the cascade triggered by a pivotal player becomes closer and closer to the set of pivotal players only (until it is exactly the set of pivotal players). This observation is confirmed in Figure~\ref{fig:ContSizeClust}.

\begin{figure}
\centering
\epsfig{file=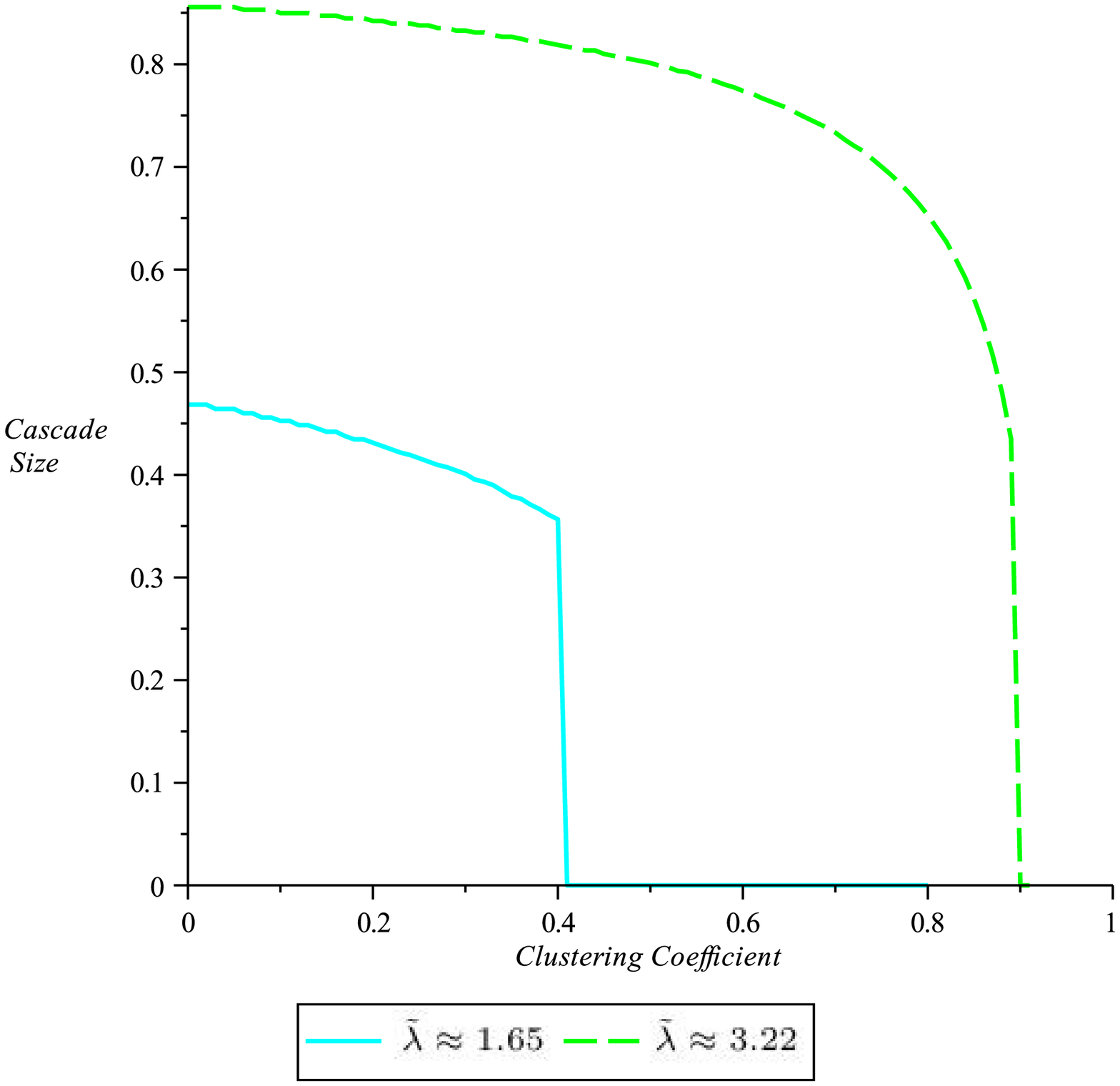, width=6cm}
\epsfig{file=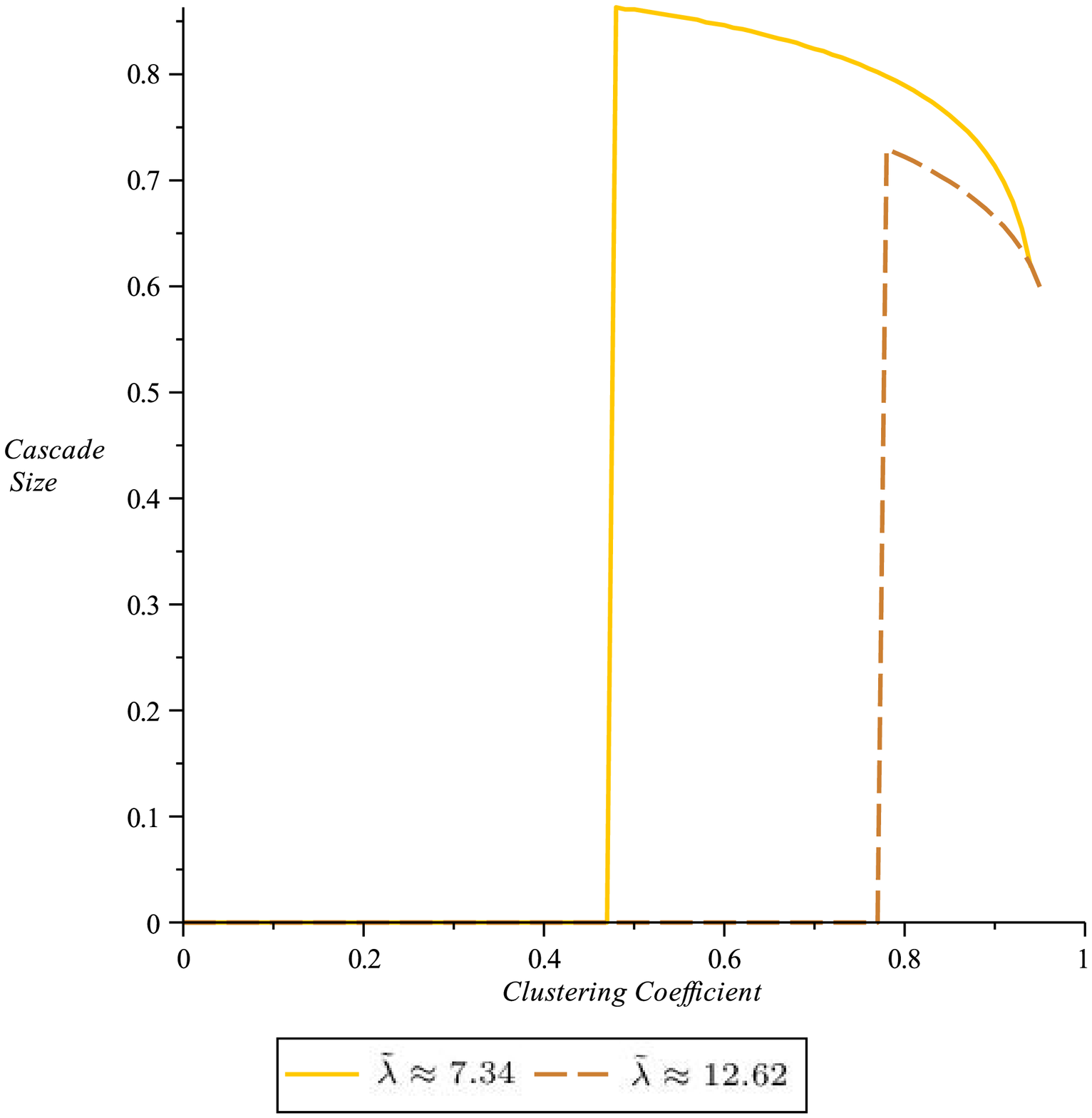, width=6cm}
\caption{Effect of the clustering on the cascade size for $q=0.12$, in a graph with a fixed power law degree distribution of mean $\tl$.}
\label{fig:ContSizeClust}
\end{figure}

To study more precisely the effect of clustering on the cascade size, we plot (in Figure~\ref{fig:ContSizeClust}) the cascade size for $q=0.12$, with respect to the clustering coefficient of a graph with power law degree distribution with exponential cutoff: $\tp_r \propto r^{-\tau} e^{-r/50}$, with parameter $\tau=2.5$ (resp. $\tau=1.81$, $\tau=1.3$, $\tau=1$), so that the mean degree of the graph is $\tl\approx 1.65$ (resp. $\tl\approx 3.22$, $\tl\approx7.34$, $\tl\approx 12.62$). Note that the values of the mean degree $\tl$ on the right-hand side of Figure \ref{fig:ContSizeClust} correspond to the case where the clustering 'helps' the contagion to spread, while the case $\tl\approx 1.65$ corresponds to the case where the clustering 'inhibits' the contagion, as detailed in the previous subsection. As for Figure \ref{fig:ContSize3}, we observe that the cascade size decreases with the clustering coefficient, when the cascade size is positive (\ie when a cascade is possible). The fact that a cascade is not possible for low values of clustering (right-hand side) comes from the fact that, for a fixed parameter $q$, the interval of $\tl$ for which a cascade is possible moves to the right, as observed in Figures \ref{fig:ContPL} and \ref{fig:ContSize3}.

\subsection{Phase transition for the symmetric threshold model with degree based activation}

In this subsection, we allow a positive fraction of nodes to be active at the beginning of the diffusion process. More precisely, on a given graph $G$, the set $S$ of initial active nodes is random, and each node of degree $d$ in $G$ belongs to $S$ with some probability $\alpha_d> 0$, independently for each node. We set $\balpha=(\alpha_d)_{d\geq 0}$. 

We define an adaptation of the usual degree based activation for the random graph $\tGr$ (so that the initial activation differs from the one in Subsection \ref{subs:diffA}). First we draw independent random variables for each vertex in the original graph $\Gr$. More precisely, for each vertex $i$ (of degree $d_i$ in $\Gr$), we draw a Bernoulli random variable $a(i)$ with parameter $\alpha_{d_i}$. When a vertex $i$ of $\Gr$ is replaced by a clique in $\tGr$, we associate to each vertex inside the clique the same activation variable $a(i)$ (if $i$ is not replaced by a clique, it keeps its own activation variable). Each vertex $v$ in $\tGr$ belongs to the initial seed $S$ if and only if $a(v)=1$.
Note that each node of degree $d$ in $\tGr$ belongs to $S$ with probability $\alpha_d> 0$ (since vertices inside the clique generated by $i$ have the same degree as $i$). Thus the only difference with the usual degree based activation is that activation variables are not independent inside a clique: either the whole clique belongs to the initial seed $S$, either no vertex in the clique belongs to $S$. 


Using the notation $\tgamma$ defined in Proposition \ref{prop:deg} and the binomial probabilities $b_{sr}(p)$ defined at the end of Section \ref{sec:intro}, we define (omitting the dependence on $\balpha$, $\boldt$, $\boldp$ and $\boldg$):
\begin{eqnarray}
L(z)&:=&\sum_{s} \frac{\left[s \gamma_s+(1-\gamma_s)\right] p_s}{\tgamma} \left[ (1-\alpha_s) t_{s0}(1-z^{s})+\alpha_s \right] \nonumber  \\ 
&& +\sum_{s} \frac{(1-\gamma_s) p_s}{\tgamma} (1-\alpha_s)\left(1-t_{s0}-\sum_{\ell\neq 0}t_{s\ell}\sum_{r\geq s-\ell}b_{sr}(z)\right), \nonumber  \\
 h(z)&:=&\sum_{s} (1-\alpha_s) sp_s \left[t_{s0}z^{s}+\gamma_s(1-t_{s0})z\right] \nonumber  \\ 
&& +\sum_{s} (1-\alpha_s) p_s (1-\gamma_s)\sum_{s\geq \ell\neq 0}t_{s\ell}\sum_{r\geq s-\ell} r b_{sr}(z),\nonumber \\
\label{eq:zetaContA} \zeta &:=&\sup\{\, z\in[0,1):\lambda z^2=h(z)\}.
\end{eqnarray}

\begin{theorem}
\label{th:contagionA}
Consider the random graph $\tGr$ for a sequence $\boldd$ satisfying
Condition \ref{cond} with probability distribution $\boldp=\pr$, and clustering parameter $\boldg=\cd$. Let $\boldt$ be a family of probability distributions, and $\boldk$ random thresholds drawn according to $\boldt$ in the original graph $\Gr$ (\ie if $i$ is a vertex in $\Gr$ replaced by a clique in $\tGr$, then all vertices in the clique have the same threshold $k(i)$).
We are given an activation set $S$ drawn according to the distribution $\balpha$ 
(so that vertices in the same clique are either all active or all inactive).
Then we have, for the symmetric threshold model defined in \ref{subs:def_cont}: if $\zeta=0$, or if $\zeta\in(0,1]$, and further $\zeta$ is such that there exists $\eps>0$ with $\lambda z^2<h(z)$ for $z\in (\zeta-\eps,\zeta)$, then we have 
that the size $C(\boldt,\balpha)$ of the active nodes at the end of the symmetric threshold process verifies:
\begin{eqnarray*}
 C(\boldt,\balpha)/\tn &\overset{p}{\longrightarrow}& L(\zeta).
\end{eqnarray*}
\end{theorem}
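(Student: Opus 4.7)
The plan is to proceed along the lines of the proof of Theorem \ref{th:contagion}, now incorporating the degree-based activation exactly as Theorem \ref{th:diffA} extends Theorem \ref{th:diff}. The starting point is to encode the contagion on $\tGr$ in terms of an equivalent process on the original configuration model $\Gr$. By Lemma \ref{lem:cascade}, each clique substituted for a vertex $i$ of $\Gr$ acts as an indivisible block: it becomes fully active if either $a(i)=1$ (initial activation) or $k(i)=0$ and at least one external neighbor becomes active; otherwise (positive threshold and $a(i)=0$) it is absorbing and blocks propagation. Non-replaced vertices behave as in the usual symmetric threshold model with their own threshold and activation variable. Thus the final cascade on $\tGr$ is determined by a coupled dynamics on $\Gr$ whose ``sites'' are of four types depending on $(X(i), a(i), k(i))$, and the contribution of each site to the size $C(\boldt,\balpha)/\tn$ is computable from the distribution of its active external neighbors.

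The central step is a fluid-limit / exploration argument on the configuration model underlying $\Gr$. As in Subsection \ref{subs:pf_cont}, I would expose half-edges one at a time and follow the number of ``pending'' half-edges whose eventual fate (active or not) has not yet been revealed, together with a counter tracking the number of active neighbors at each still-inactive vertex. The key parameter is $z=$ the probability that a uniformly chosen half-edge is matched to a half-edge incident to a vertex whose ``inactive'' block survives the dynamics. A direct generating-function computation over the four site types, taking expectation with respect to $(X(i),a(i),k(i))$ and summing the binomial contributions coming from the $s-\ell$ active neighbors needed, yields the function $h(z)$ above, and the associated balance equation for the exploration is $\lambda z^2 = h(z)$ — exactly \eqref{eq:zetaContA}. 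The supremum $\zeta$ then identifies the first time the exploration can stop with a positive density of still-inactive half-edges; the downcrossing hypothesis $\lambda z^2 < h(z)$ on $(\zeta-\eps,\zeta)$ (or $\zeta=0$) is exactly what allows the martingale concentration / differential-equation-method argument to pin down the stopping point whp, ruling out that fluctuations stall the exploration at some larger fixed point.

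Once $\zeta$ is identified, the limit $L(\zeta)$ is obtained by classifying the vertices of $\tGr$ according to whether they belong to a clique or not, and whether they are initially active or not. A vertex of a clique of type $s$ is active whenever $a(\cdot)=1$, or $t_{s0}=1$ (threshold $0$) and at least one of the $s$ external half-edges of the clique reaches an active site, giving the first line of $L(z)$ after weighting by $[s\gamma_s+(1-\gamma_s)]/\tgamma$. A non-replaced vertex of degree $s$ with threshold $\ell>0$ is active iff $a(\cdot)=1$ or it collects at least $s-\ell$ active half-edges, which is expressed through the binomial sums $\sum_{r\geq s-\ell} b_{sr}(\zeta)$ and gives the second line. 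Collecting terms and using Proposition \ref{prop:deg} to normalize by $\tn/n\to_p\tgamma$ yields $C(\boldt,\balpha)/\tn \to_p L(\zeta)$.

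The main obstacle will be the rigorous control of the exploration in the presence of correlated activations and thresholds inside cliques, i.e.\ the fact that the block structure destroys the naive independence of half-edges used in the standard configuration-model proofs. The cure is the same as in the proof of Theorem \ref{th:contagion}: rather than explore half-edges at the level of $\tGr$, carry out the pairing at the level of $\Gr$ and let the blocks contribute in a single step once their site type is revealed, so each clique is processed atomically and the rest of the unexplored graph remains uniformly random conditional on the revealed sites. Combined with the uniform summability from Condition \ref{cond}-\textit{(iii)} this makes the differential-equation-method bounds of \cite{ja09} applicable, and the downcrossing condition then upgrades the lower bound $\liminf C(\boldt,\balpha)/\tn \geq L(\zeta)$ (which follows as in \eqref{eq:pivotal_players_1}) to the two-sided convergence claimed in the theorem.
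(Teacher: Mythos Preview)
Your proposal is correct in spirit and would work, but the paper takes a shorter route. Rather than rerunning an exploration/differential-equation argument on $\Gr$ with your four site types $(X(i),a(i),k(i))$, the paper observes that the atomic-block behaviour of cliques can be encoded entirely as a modified threshold law $\boldt'$ on $\proj(\tG)$: keep $t'_{s0}=t_{s0}$, set $t'_{s\ell}=(1-\gamma_s)t_{s\ell}$ for $0<\ell<s$, and move the mass $\gamma_s(1-t_{s0})$ to $\ell=s$ (this is Proposition~\ref{prop:link_sym_thr_model}, now applied with a set-valued seed). A clique with positive threshold and $a(\cdot)=0$ is indistinguishable from a single vertex of threshold equal to its degree, which is exactly what $\boldt'$ records. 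With this repackaging the process on $\proj(\tG)$ is a standard symmetric threshold model with law $\boldt'$ and the same activation $\balpha$, so the paper simply invokes (a slight extension of) Theorem~10 of \cite{lel:diff} to obtain $C'_{s\ell}(S,\boldt')/n$, then lifts back to $\tGr$ by multiplying the contributions of active cliques by $s$ and using $\tn/n\to_p\tgamma$. Your route rederives the content of \cite{lel:diff} from scratch; the paper's coupling avoids this and makes the computation of $L(\zeta)$ a bookkeeping exercise. Two minor corrections to your sketch: the machinery to cite for the threshold model is \cite{lel:diff}, not \cite{ja09} (which treats percolation only); and a vertex with threshold $\ell$ activates once it has at least $\ell+1$ active neighbours, not $s-\ell$ (the sum $\sum_{r\geq s-\ell}b_{sr}(\zeta)$ is the probability of remaining \emph{inactive}).
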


Heuristically, taking $\alpha_s=0$ for all $s$ in the definitions of the previous theorem allows to recover the result of Theorem \ref{th:contagion}. When $\gamma_r=0$ for all $r\geq 0$, we recover a result in \cite{lel:diff}.

If we apply this result to the case where thresholds are constant among nodes (\ie there exists an integer $k$ such that $k(v)=k$ for each vertex $v$), our model corresponds to a slight modification of the usual bootstrap percolation. Indeed the initial activation here is not independent among nodes that belong to the same clique. 

\section{Proofs} \label{sec:pf}

In the whole section, we consider a sequence $\boldd$ satisfying Condition \ref{cond} with probability distribution $\boldp=\pr$. 

\subsection{Configuration Model}

In order to prove Theorem \ref{th:diff}, it will be more convenient to work with the configuration model $\Grr$ (see for instance \cite{bb}): each vertex $i$, $1\leq i\leq n$, has $d_i$ half-edges, and the random graph $\Grr$ is obtained by taking a uniform matching among all possible matchings of half-edges into pairs. Conditioned on this multigraph being simple, it is distributed as $\Gr$. Condition \ref{cond} implies (in particular) that 
\begin{eqnarray} \label{eq:multi}
\liminf \Pb(\Grr \textrm{ is simple}) >0
\end{eqnarray}
(see \cite{ja:simple}), which allows to transfer directly results that hold in probability for $\Grr$ to the model $\Gr$.

As for the simple graph, we consider the model $\tGrr$: we associate to each $i\in\intn$ a Bernoulli variable $X(i)$ with parameter $\gamma_{d_i}$, all variables being independent. If $X(i)=1$, we replace node $i$ by a clique of size $d_i$ in which each vertex has exactly $d_i-1$ neighbors inside the clique, and one half-edge outside. Then we match half-edges as for $\Grr$. Hence $\tGrr$ is simple if and only if $\Grr$ is. So, conditioned on $\tGrr$ being simple, it is distributed as $\tGr$, and equation \eqref{eq:multi} implies that $$\liminf \Pb(\tGrr \textrm{ is simple}) >0.$$ Therefore, we can prove Theorems \ref{th:diff} and \ref{th:contagion} for one of the models $\tGrr$ or $\tGr$, and it will imply that they are true for both.

\subsection{Link between the graph $\tGr$ and the original graph $\Gr$}
 \label{subs:link}

Let $\tG$ be distributed as $\tGr$. We say that a vertex in $\tGr$ has parent $i\in\intn$ if it belongs to a clique that replaces the vertex $i$ of $\Gr$ (when $X(i)=1$) or if it is $i$ (when $X(i)=0$). For any subgraph $\tH\subset \tG$, we obtain the graph $\projj(\tH)$ by identifying in $\tH$ the vertices that have the same parent and that are connected in $\tH$. For instance, Figure \ref{fig:Proj} represents a clique of size $4$ in $\tG$ that comes from the replacement of a vertex $i$: thus all the vertices in the clique have the same parent $i$. In the subgraph $\tH$, some of the edges of the clique are not present (those in dots): the clique is split into two connected components. In the corresponding graph $\projj(\tH)$, we merge the vertices of the clique that are connected together.

\begin{figure}
\centering
\epsfig{file=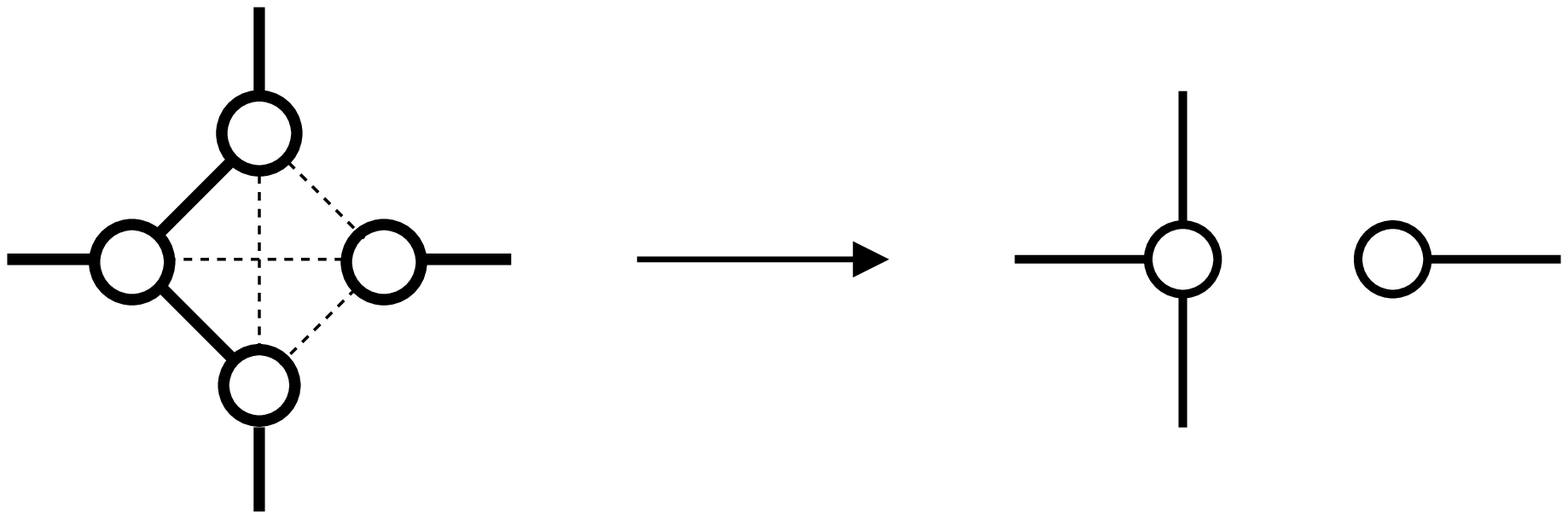, width=6cm}
\caption{Transformation of a subgraph in $\tGrr$ (or $\tGr$) by $\projj$}
\label{fig:Proj}
\end{figure}

We use the same definition of $\projj(\tH)$ when $\tH$ is a subgraph of a random graph distributed as $\tGrr$.

For any graph $G$, set $\nu(G)$ for the number of vertices in $G$.
The next lemma will be useful in several proofs.
\begin{Lemma}\label{lem:giant_comp}
Let $\tG$ be distributed as $\tGr$. Let $H$ be a subgraph of $\proj(\tG)$ such that $\nu(H)=o_p(n)$. Let $\tH$ be the maximal subgraph of $\tG$ such that $\proj(\tH)=H$. Then we have: $\nu(\tH)=o_p(\tn)$.
\end{Lemma}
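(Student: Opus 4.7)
The plan is to bound $\nu(\tH)$ fiberwise using the projection $\proj$ and then control the sum of fiber sizes with the third-moment condition from Condition~\ref{cond}. First I would describe the fibers precisely: since every clique in $\tG$ carries all of its internal edges, each clique collapses to a single vertex under $\proj$, so there is a natural injection $w\mapsto i(w)$ from vertices of $\proj(\tG)$ to the index set $\{1,\dots,n\}$ of parents (the possible parents being those $i$ with $X(i)=0$, or with $X(i)=1$ and $d_i\geq 1$). For any $\tH\subseteq\tG$ with $\proj(\tH)=H\subseteq\proj(\tG)$, every vertex of $\tH$ lies in the fiber over some $w\in H$, and the fiber over $w$ consists of vertices in $\tH$ sharing parent $i(w)$; its size is therefore at most $d_{i(w)}$ if $X(i(w))=1$ and equal to $1$ if $X(i(w))=0$. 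This yields the deterministic bound
\begin{equation*}
\nu(\tH)\;\leq\;\sum_{w\in H}\max\!\bigl(d_{i(w)},1\bigr)\;\leq\;\nu(H)+\sum_{w\in H} d_{i(w)}.
\end{equation*}

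Next I would handle the remaining sum by a standard truncation at an arbitrary level $K>0$. Using the injectivity of $w\mapsto i(w)$, so that no parent index is double-counted, write
\begin{equation*}
\sum_{w\in H} d_{i(w)}\;\leq\;K\,\nu(H)\;+\;\sum_{w\in H} d_{i(w)}\mathds{1}\{d_{i(w)}>K\}\;\leq\;K\,\nu(H)\;+\;\sum_{i=1}^n d_i\mathds{1}\{d_i>K\}.
\end{equation*}
Condition~\ref{cond}-\textit{(iii)} gives the deterministic estimate
$\sum_{i=1}^n d_i\mathds{1}\{d_i>K\}\leq K^{-2}\sum_{i=1}^n d_i^{\,3}=O(n/K^2)$, which is the precise form of uniform integrability I need.

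Combining the two displays, $\nu(\tH)/n\leq(K+1)\,\nu(H)/n+O(1/K^2)$. Given any $\eps>0$, choose $K$ large enough that the second term is smaller than $\eps/2$; then the hypothesis $\nu(H)=o_p(n)$ forces the first term below $\eps/2$ with probability tending to $1$, so $\nu(\tH)=o_p(n)$. Finally, Proposition~\ref{prop:deg} gives $\tn/n\to_p\tgamma>0$, hence $o_p(n)=o_p(\tn)$ and the lemma follows.

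The only point requiring genuine care is the fiber description, i.e.\ verifying that membership of a vertex of $\tH$ in a fiber over some $w\in H$ is well-defined (this uses $\proj(\tH)=H$ and the fact that $H$ is a \emph{subgraph} of $\proj(\tG)$, so that distinct vertices of $H$ have distinct parents); the maximality hypothesis on $\tH$ is not needed for the bound, only to single out one canonical $\tH$. The truncation step is then routine given Condition~\ref{cond}-\textit{(iii)}.
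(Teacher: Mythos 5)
Your proof is correct and takes essentially the same route as the paper: bound $\nu(\tH)$ fiberwise by the parent degrees of the vertices of $H$, use Condition~\ref{cond}-\textit{(iii)} to control that degree sum when $\nu(H)=o_p(n)$, and conclude via Proposition~\ref{prop:deg} since $\tn/n\to_p\tgamma>0$. The only difference is cosmetic: the paper bounds $\sum_{w\in H}d_{i(w)}$ in one line by Cauchy--Schwarz (using $\sum_i d_i^2=O(n)$), whereas you truncate at a level $K$; both are standard ways of exploiting the same uniform integrability of the degrees.
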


\begin{proof}
 We can bound $\nu(\tH)/n $ using Cauchy-Schwarz inequality:
\begin{eqnarray*}
 \nu(\tH)/n &\leq& \sum_{r} r \nu_r(H)/n \\
&\leq& \sqrt{\sum_{r} \frac{r^2 \nu_r(H)}{n} } \sqrt{\sum_{r} \frac{\nu_r(H)}{n} } \\
&\leq& \sqrt{ \frac{\sum_{i} d_i^2}{n} } \sqrt{ \frac{\nu(H)}{n} } \\
\end{eqnarray*}

Yet Condition \ref{cond}-\textit{(iii)} implies that $ \sum_{i} d_i^2/n=O(1)$, and by hypothesis $\nu(H)/n \to_p 0$, so $ \nu(\tH)/n \to_p 0$, and  $ \nu(\tH)/\tn \to_p 0$ due to Proposition \ref{prop:deg}.
\end{proof}

\subsection{Proof of Theorem \ref{th:diff}}
\label{subs:steps_diff}

A heuristic (using a branching process approximation) is given in Appendix \ref{app:BP}. We first give the idea of the proof.

Let $\tG$ be distributed as $\tGrr$ and $\pi\in[0,1]$. In the percolated graph $\tG_{\pi}$, the removal of some edges inside a clique can split the clique into several connected components.
In order to study the percolated graph $\tG_{\pi}$ (as described in Section \ref{sec:diff}), we proceed in three steps: 
\begin{itemize}
 \item[\textit{Step 1.}] We consider only the edges that are \textit{inside} a clique, and we delete independently each of them with probability $1-\pi$. The graph we obtain is called $\tG_{\pi}^{(1)}$. We are interested in the graph $G'=\projj(\tG_{\pi}^{(1)})$. If we condition $G'$ on its number of vertices $n'$ and its degree sequence $\boldd'$, we have that $G'$ is distributed as $G^*\left(n',\boldd'\right)$. The first step consists in computing the asymptotic distribution of the degree sequence $\boldd'$.
\item[\textit{Step 2.}] Then we delete independently with probability $1-\pi$ each edge of $G'$, and we apply results of \cite{ja09} in order to study the components' sizes in the percolated graph $G'_{\pi}$.
\item[\textit{Step 3.}] We deduce the components' sizes in $\tG_{\pi}$ from the previous step, using the fact that $\projj(\tG_{\pi})$ is distributed as $G'_{\pi}$.
\end{itemize}

In the following, when we consider the model $\tGrr$, we take the multiplicity of edges into account when we compute the degree of a vertex. More precisely, we say that a vertex in $\Grr$ or $\tGrr$ has 'degree' $d$ if it has $d$ (simple) half-edges. For instance, each loop of a given vertex has contribution $2$ in its degree.


\textit{Step 1.} For $d\geq 1$, let $V^{(n)}_{d}$ be the set of vertices $i$ in $\Gr$ with degree $d$ and such that $X(i)=1$: $i$ is replaced by a clique $K(i)$ of size $d$ in $\tG$. Let $K(i,\pi)$ be the subgraph of $K(i)$ obtained after a bond percolation with parameter $\pi$. We consider the subgraph $\tF_d(\pi)\subset\tG_{\pi}$ that contains the percolated version of the cliques with initial size $d$:
\begin{eqnarray*}
 \tF_d(\pi)=\bigcup_{i\in V^{(n)}_{d}} K(i,\pi)
\end{eqnarray*}
The next lemma gives the limit, as $n\to\infty$, for the number $N^{(n)}(d,k,\pi)$ of connected components in $\tF_d(\pi)$ whose size is $k\leq d$.

\begin{Lemma}\label{lem:diff_comp}
For any $d\geq 1$ and $k\leq d$, we have that: 
\begin{eqnarray*}
N^{(n)}(d,k,\pi)/n \overset{p}{\longrightarrow} \frac{d}{k} f(d,k,\pi) p_d \gamma_d ,
\end{eqnarray*}
where $f(d,k,\pi)$ is given by \eqref{eq:deff}. 
\end{Lemma}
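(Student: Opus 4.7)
The plan is to recognize that $N^{(n)}(d,k,\pi)$ is a sum over independent cliques and then invoke a law-of-large-numbers argument. First I would decompose
\[
N^{(n)}(d,k,\pi) \;=\; \sum_{i\in V^{(n)}_{d}} M_{i}(d,k,\pi),
\]
where $M_i(d,k,\pi)$ denotes the number of connected components of size exactly $k$ in the percolated clique $K(i,\pi)$. Because the percolations of different cliques are mutually independent, the random variables $(M_i(d,k,\pi))_{i\in V^{(n)}_d}$ are i.i.d., distributed as the analogous quantity $M(d,k,\pi)$ for a single copy of $K_d(\pi)$.

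Next I would identify the mean $\mathbb{E}[M(d,k,\pi)]$. Label the vertices of $K_d$ by $\{1,\dots,d\}$. By the vertex-transitivity of $K_d$, for each vertex $v$ the probability that the component of $v$ in $K_d(\pi)$ has size exactly $k$ equals $f(d,k,\pi)$ as given by \eqref{eq:deff}. Summing the indicators $\mathbf{1}\{|\mathcal{C}(v)|=k\}$ over all $d$ vertices and dividing by $k$ (since each component of size $k$ is counted exactly $k$ times) gives
\[
M(d,k,\pi) \;=\; \frac{1}{k}\sum_{v=1}^{d}\mathbf{1}\{|\mathcal{C}(v)|=k\},
\qquad
\mathbb{E}[M(d,k,\pi)] \;=\; \frac{d}{k}\,f(d,k,\pi).
\]
Note that $M(d,k,\pi)\le d/k$ is bounded, so all moments exist.

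Then I would show that $|V^{(n)}_d|/n \to_p p_d\gamma_d$. Indeed, $|V^{(n)}_d|$ is the sum of $n_d$ independent $\mathrm{Ber}(\gamma_d)$ random variables (by the independence of the $X(i)$'s), and $n_d/n\to p_d$ by Condition \ref{cond}-\textit{(i)}, so the weak law of large numbers gives the limit. Combining this with the i.i.d.\ law of large numbers applied conditionally on $|V^{(n)}_d|$ (using that $M_i$ is uniformly bounded by $d$, so Chebyshev's inequality suffices), we obtain
\[
\frac{N^{(n)}(d,k,\pi)}{n}
\;=\; \frac{|V^{(n)}_d|}{n}\cdot\frac{1}{|V^{(n)}_d|}\!\!\sum_{i\in V^{(n)}_d}\! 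M_i(d,k,\pi)
\;\overset{p}{\longrightarrow}\; p_d\gamma_d\cdot\frac{d}{k}f(d,k,\pi),
\]
which is the claim. (If $p_d\gamma_d=0$ both sides vanish; the conditional LLN is then trivial because $|V^{(n)}_d|=o_p(n)$ and $M_i\le d/k$.)

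I do not anticipate a real obstacle: the cliques are genuinely independent after the first-stage percolation, the $X(i)$'s are independent of everything else, and all random variables in play are uniformly bounded. The only subtlety worth double-checking is the combinatorial identity $\mathbb{E}[M(d,k,\pi)] = (d/k)f(d,k,\pi)$, which follows cleanly from transitivity of the action of $S_d$ on the vertices of $K_d$ together with the definition of $f(d,k,\pi)$ as the probability that vertex $1$ lies in a component of size $k$.
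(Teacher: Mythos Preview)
Your proposal is correct and follows essentially the same route as the paper: both arguments use independence across cliques, the Law of Large Numbers together with $|V^{(n)}_d|/n\to_p p_d\gamma_d$, and the vertex-transitivity of $K_d$ to identify the limiting constant $\frac{d}{k}f(d,k,\pi)$. The only cosmetic difference is that the paper first counts vertices with a fixed label lying in a size-$k$ component, then multiplies by $d$ and divides by $k$, whereas you compute $\Eb[M(d,k,\pi)]$ directly via the indicator identity; these are the same calculation reorganized.
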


\begin{proof}
For each vertex $i$ in $V^{(n)}_{d}$, we label the vertices of $K(i,\pi)$ from $1$ to $d$. We look at all the vertices with label $1$, and we let $M^{(n)}(d,k,\pi)$ be the number of such vertices whose connected component in $K(i,\pi)$ has size $k$. Using the Law of Large Numbers and the fact that $|V^{(n)}_{d}|/n\to_p p_d \gamma_d$, we have that $M^{(n)}(d,k,\pi)/n\to_p f(d,k,\pi) p_d \gamma_d $, where $f(d,k,\pi)$ is by definition the probability that the component of 1 contains $k$ vertices. So the total number of vertices in $\tF_d(\pi)$ that belongs to a component of size $k$ is: $d f(d,k,\pi) p_d \gamma_d n +o_p(n)$ and, in order to have the number of such components, we have to divide by $k$, which proves the lemma.
\end{proof}

Let $\tG_{\pi}^{(1)}$ be the graph obtained from $\tGrr$ when we replace each vertex $i$ such that $X(i)=1$ by the percolated clique $K(i,\pi)$.
For any $k\geq 0$, let $n'_k$ be the number of vertices with 'degree' $k$ in the projected graph $G'=\projj(\tG_{\pi}^{(1)})$.
In order to compute $n'_k$, we have to consider the vertices $i$ such that $X(i)=0$ (there are $n_k-|V^{(n)}_{k}|$ such ones, where $n_k$ is the number of vertices with 'degree' $k$ in $\Grr$), and the vertices that come from a clique of initial size $d$, for some $d\geq k$ (each such vertex corresponds to a component of size $k$ in $\tF_d(\pi)$, so there are $N^{(n)}(d,k,\pi)$ such ones). This gives the following relation, for all $k\geq 0$:
\begin{eqnarray*}
 n'_k=n_k-|V^{(n)}_{k}|+\sum_{d\geq k} N^{(n)}(d,k,\pi),
\end{eqnarray*}
So Lemma \ref{lem:diff_comp} gives the following asymptotic distribution for the degree sequence $\boldd'$:
\begin{Lemma}\label{lem:diff_asymptotic}
Let $n':=\sum_k n'_k$ be the total number of vertices in $G'$. Then the proportion of vertices with degree $k$ in $G'$ has the following limit, as $n\to\infty$: 
\begin{eqnarray*}
\frac{n'_k}{n'} &\overset{p}{\longrightarrow}& p'_k:=\frac{\varrho_k}{\sum_{\ell}\varrho_{\ell}}
\end{eqnarray*}
where $\varrho_k:=p_k(1-\gamma_k)+\sum_{d\geq k} \frac{d}{k} f(d,k,\pi) p_d \gamma_d $
\end{Lemma}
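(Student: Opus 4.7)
The plan is to start from the identity $n'_k = n_k - |V^{(n)}_k| + \sum_{d\geq k} N^{(n)}(d,k,\pi)$ derived immediately before the lemma, prove termwise that $n'_k/n \to_p \varrho_k$, sum over $k$ to obtain $n'/n \to_p \varrho := \sum_\ell \varrho_\ell$, and conclude via the continuous mapping theorem applied to the ratio $n'_k/n'$. Three ingredients handle the termwise convergence: Condition \ref{cond}-\textit{(i)} gives $n_k/n \to p_k$; since $|V^{(n)}_k|$ is Binomial$(n_k,\gamma_k)$, the Law of Large Numbers gives $|V^{(n)}_k|/n \to_p \gamma_k p_k$; and Lemma \ref{lem:diff_comp} gives $N^{(n)}(d,k,\pi)/n \to_p (d/k) f(d,k,\pi) p_d \gamma_d$ for every fixed $d\geq k$.

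The only real technical step is interchanging the limit with the infinite sum $\sum_{d\geq k}$. I would exploit the deterministic identity $\sum_{k\leq d} k\, N^{(n)}(d,k,\pi) = d\,|V^{(n)}_d|$ (the total vertex count inside the percolated cliques of initial size $d$), which yields the uniform bound $N^{(n)}(d,k,\pi)\leq d\,n_d/k$. Consequently, for any $k\geq 1$ and $D\geq k$,
\[
\sum_{d>D}\frac{N^{(n)}(d,k,\pi)}{n}\;\leq\;\frac{1}{k}\sum_{d>D}\frac{d\,n_d}{n},
\]
and Condition \ref{cond}-\textit{(iii)} delivers the uniform summability of $\sum_d d\,n_d/n$, so this tail tends to $0$ uniformly in $n$ as $D\to\infty$. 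A routine truncation argument then upgrades the fixed-$d$ convergences to $n'_k/n \to_p \varrho_k$.

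Exactly the same tail control, applied to $n' = n - \sum_d |V^{(n)}_d| + \sum_d\sum_{k\leq d} N^{(n)}(d,k,\pi)$ together with the bound $\sum_d\sum_{k\leq d} N^{(n)}(d,k,\pi) \leq \sum_d d\,|V^{(n)}_d|$, yields $n'/n \to_p \varrho$. One checks $\varrho<\infty$ because $\sum_k\sum_{d\geq k}(d/k)f(d,k,\pi)p_d\gamma_d = \sum_d p_d\gamma_d\,\Eb[\#\text{components of }K_d(\pi)]\leq \sum_d d\,p_d=\lambda$, and $\varrho>0$ whenever $\lambda>0$ (the expected number of components in any non-empty percolated clique is at least $1$, giving $\varrho\geq 1 - p_0\gamma_0$). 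The continuous mapping theorem then delivers $n'_k/n' = (n'_k/n)/(n'/n) \to_p \varrho_k/\varrho = p'_k$, which is the claimed limit. I anticipate no difficulty beyond the tail bookkeeping sketched above, since Condition \ref{cond}-\textit{(iii)} was designed precisely to furnish this uniform control.
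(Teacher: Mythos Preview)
Your proposal is correct and follows the same approach as the paper: use the decomposition $n'_k = n_k - |V^{(n)}_k| + \sum_{d\geq k} N^{(n)}(d,k,\pi)$, apply Lemma~\ref{lem:diff_comp} termwise, and divide by $n'/n$. The paper in fact gives no proof beyond asserting that the lemma follows from Lemma~\ref{lem:diff_comp} and this identity; your version is strictly more detailed, supplying the tail-control argument (via the deterministic bound $N^{(n)}(d,k,\pi)\leq d\,n_d/k$ and the uniform summability from Condition~\ref{cond}-\textit{(iii)}) that the paper leaves implicit.
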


In addition, the uniform summability of $kn_k/n$ implies the uniform summability of $kn'_k/n'$, so that $\sum_k kn'_k/n' \to_p \sum_k k p'_k$.

\textit{Step 2.} We apply Theorem 3.9 in \cite{ja09} to the random graph $G'$. Indeed, we can assume without loss of generality that the previous convergences (Lemma \ref{lem:diff_asymptotic}) hold a.s., and not just in probability (as in \cite{ja09}: using the Skorohod coupling theorem, see \cite{kal} for instance, or arguing by selecting suitable subsequences). Then there is a giant component in the percolated graph $G'_{\pi}$ if and only if 
$$ \pi\sum_d d(d-1)p'_d>\sum_d d p'_d, $$ which is equivalent to the fact that $\pi \Eb\left[ \cK(D^*+1,\pi,\boldg)-1\right]>1$.

\textit{Step 3.} The proof of \textit{(ii)} follows easily from the previous step, and Lemma \ref{lem:giant_comp}. 

We give the main lines of the proof of \textit{(i)}. Assume $\pi>\pi_c$, which corresponds to $\pi\sum_d d(d-1)p'_d>\sum_d d p'_d$. Let $\cC_1$ be the largest connected component in $G'_{\pi}=\proj(\tG_{\pi})$ and $\bcC_1$ be the connected component of $\tG_{\pi}$ such that $\proj(\bcC_1)=\cC_1$.

We first compute the limit of $\nu(\bcC_1)/\tn$ as $n\to\infty$. 
Let $g$ be the generating function
\begin{eqnarray*}
\label{def:g}
 g(x) &:=& \sum_k p'_k x^k = \frac{1}{\varrho} \sum_k \varrho_k x^k,
\end{eqnarray*} 
and recall that its mean is called $\mu = \sum_k k \varrho_k / \varrho$.
Results in \cite{ja09} show that the number $\nu_r(\cC_1)$ of vertices with degree $r$ in $\cC_1$ satisfies: $\nu_r(\cC_1)/n' \to_p \sum_{\ell\geq r}b_{\ell r}(\sqrt{\pi})p'_{\ell}(1-\xi^r)$, where $n'$ is the total number of vertices in the graph $G'$, and $\xi$ is the unique $\xi\in(0,1)$ such that
\begin{eqnarray} \label{eq:xi_diff}
 g'(1-\pi^{1/2}+\pi^{1/2}\xi)=\mu(1-\pi^{-1/2}+\pi^{-1/2} \xi).
\end{eqnarray}
Since $h(z)=(1-\pi+\pi z)\cdot g' (1-\pi+\pi z)$, we have that $\xi$ is the solution of \eqref{eq:xi_diff} if and only if $\zeta=1-\pi^{-1/2}+\pi^{-1/2}\xi$ is the solution of $\mu \zeta(1-\pi+\pi \zeta)=h(\zeta)$. Note that we used the second notation in the statement of Theorem \ref{th:diff}, in order to be consistent with the notations of Theorem \ref{th:diffA} (in fact, we could have used either results in \cite{ja09} or Theorem 11 of \cite{lel:diff} for the current proof).

Unfortunately, we cannot deduce directly the size of $\bcC_1$ using only the asymptotic for $\nu_r(\cC_1)/n' $, $r\geq 0$. We have to be more precise: let $\nu_r^0(\cC_1)$ (resp. $\nu_r^1(\cC_1)$) be the number of vertices $i$ with degree $r$ in $\cC_1$ such that $X(i)=0$ (resp. $X(i)=1$). Then we have:
\begin{eqnarray*}
\nu_r^0(\cC_1)/n' &\to_p& \sum_{\ell\geq r}b_{\ell r}(\sqrt{\pi})p_{\ell}(1-\gamma_{\ell}) (1-\xi^r)/\varrho, \\
\nu_r^1(\cC_1)/n' &\to_p& \sum_{\ell\geq r}b_{\ell r}(\sqrt{\pi}) \sum_{d\geq \ell} \frac{d}{\ell} f(d,\ell,\pi) p_{d}\gamma_{d} (1-\xi^r)/\varrho.
\end{eqnarray*}
In summations, $d$ represents the degree of vertices in the initial graph $\tG$, $\ell$ the degree of vertices in $\tG^{(1)}_{\pi}$ (after the percolation \textit{inside} cliques), and $r$ the degree of vertices after the percolation on \textit{external} edges. In order to recover $\nu(\bcC_1)$, we have to multiply each term in $\nu_r^1(\cC_1)$ by $\ell$, and then sum over all $r$, which gives (exchanging summations on $r$ and $\ell$):
\begin{eqnarray*}
 \nu(\bcC_1)/n' &\overset{p}{\longrightarrow}& \frac{1}{\varrho} \sum_{k\geq 1} \sigma_k \left(1-(1-\pi^{1/2}+\pi^{1/2}\xi)^k \right). 
\end{eqnarray*}
Using that $n'/n\to_p \varrho$ and $\tn/n\to_p \tgamma$, we obtain that $\nu(\bcC_1)/\tn\to_p L(\zeta)$.

Let $\tC_1$ be the largest component in $\tG_{\pi}$. Adding cliques changes the sizes of the connected components: 
hence we have to prove that $\bcC_1=\tC_1$ whp. Let $\tC$ be any other component of $\tH$ different from $\bcC_1$. Its projection $\cC=\proj(\tC)$ is different from $\cC_1$, so Theorem 3.9 in \cite{ja09} implies that $\nu(\cC)/n \to_p 0$. Using Lemma \ref{lem:giant_comp} with $H=\cC$ shows that $\nu(\tC)/\tn \to_p 0$. Hence $\bcC_1$ is the largest connected component of $\tG$ whp, which ends the proof.


\subsection{Proof of Theorem \ref{th:diffA}}

The difference with the previous proof is the following: instead of using Theorem 3.9 of \cite{ja09} in steps 2 and 3, we use Theorem 10 of \cite{lel:diff}.

Indeed the first step is the same: the graph $G'=\projj(\tG_{\pi}^{(1)})$ (where $\tG_{\pi}^{(1)}$ is the graph obtained from $\tGr$ after a bond percolation on the edges \textit{inside} cliques only) has asymptotic degree distribution $\boldp'=(p'_k)_k$, with $p'_k=\varrho_k/\varrho$.

We apply (a slight extension of) Theorem 10 in \cite{lel:diff} for the graph $G'$ (with $t_{s\ell}=\Indb_{\{\ell=0\}}$). Let $\nu_s^0$ be the number of vertices $i$ such that $X(i)=0$ and that satisfy: the degree of $i$ in $G'$ (that is to say before the bond percolation in $G'$) is $s$, and $i$ is active at the end of the process. Let $\nu_{ds}^1$ be the number of vertices $i$ such that $X(i)=1$ and that satisfy: the degree of $i$ in the original graph $\proj(\tG)$ is $d$, the degree of $i$ in $G'$ is $s\leq d$, and $i$ is active at the end of the process. The probability that such a node $i$ (with degree $d$ in $\proj(\tG)$ and $s$ in $G'$) does not belong to the original seed $S$ is $(1-\alpha_d)^s$ (and initial activations are independent among nodes). Hence we have:
\begin{eqnarray*}
\nu_s^0/n' &\to_p& p_s(1-\gamma_s)\left[1-(1-\alpha_s)(1-\pi+\pi \zeta)^s\right]/\varrho, \\
\nu_{ds}^1/n' &\to_p&  \frac{d}{s} f(d,s,\pi) \gamma_d p_d \left[1-(1-\alpha_d)^s(1-\pi+\pi \zeta)^s\right]/\varrho,
\end{eqnarray*}
where $\zeta$ is given by \eqref{eq:zetaDiffA}. In order to obtain $C^{b}(\pi,\balpha)$, we have to multiply $\nu_{ds}^1$ by $s$, and sum over all $d$ and $s$, which gives: 
\begin{eqnarray*}
 C^{b}(\pi,\balpha)/n' &\to_p& \sum_s p_s(1-\gamma_s)\left[1-(1-\alpha_s)(1-\pi+\pi \zeta)^s\right]/\varrho \\
&& + \sum_{d\geq s} d f(d,s,\pi) \gamma_d p_d \left[1-(1-\alpha_d)^s(1-\pi+\pi \zeta)^s\right]/\varrho
\end{eqnarray*}
and ends the proof (since $n'/n\to_p \varrho$ and $\tn/n\to_p \tgamma$).

\subsection{Proof of Theorem \ref{th:contagion}}
\label{subs:pf_cont}

As before, we say that a vertex in $\tGr$ has parent $i\in\intn$ if it belongs to a clique that replaces the vertex $i$ of $\Gr$ (when $X(i)=1$) or if it is $i$ (when $X(i)=0$). 
For any graph $G$ and any vertex $v$ of $G$, let $D(v,\boldt)$ be the subgraph of $G$ induced by the final set of active vertices, when $v$ is the only vertex in the initial seed. With the notations of \ref{subs:contagion}, the number of vertices in $D(v,\boldt)$ is $C(v,\boldt)$. When $H$ is a subgraph of $G$, we set $D(H,\boldt)$ for the subgraph induced by the final set of active vertices in $G$, when the initial vertices in the seed are those of $H$.

We can make a comparison between an epidemic starting from a vertex $u$ in $\tGr$, and the epidemic that would have been generated by its parent $i$ in $\Gr$ (recall that thresholds are drawn such that a vertex $u$ has the same threshold as its parent $i$).
\begin{Proposition}\label{prop:cascade}
Let $u$ be a vertex of $\tG=\tGr$. Let $i$ be its parent, and $K$ be the clique generated by $i$ if $X(i)=1$ (otherwise, set $K=\{u\}$). Then we can bound the epidemic generated by $u$ the following way:
\begin{eqnarray*}
 \proj\left(D(u,\boldt)\right)\subset\proj\left(D(K,\boldt)\right)\subset D(i,\boldt).
\end{eqnarray*}
\end{Proposition}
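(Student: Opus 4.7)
The plan is to prove the two inclusions separately. The first, $\proj(D(u,\boldt)) \subset \proj(D(K,\boldt))$, follows from the standard monotonicity of the symmetric threshold process with respect to the initial seed: since $\{u\} \subset K$ by the very definition of $K$, a straightforward induction on the activation time in $\tG$ gives $D(u,\boldt) \subset D(K,\boldt)$ as induced subgraphs of $\tG$, and applying $\proj$ preserves this inclusion.

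For the second inclusion $\proj(D(K,\boldt)) \subset D(i,\boldt)$, the key step is to prove by induction on the activation time $t$ in the $\tG$-process started from $K$ that every $w \in V(D(K,\boldt))$ satisfies $\proj(w) \in V(D(i,\boldt))$. The base case $t=0$ is immediate, as the vertices active at $t=0$ are exactly those of $K$, all with parent $i$, and $i$ belongs to the initial seed of the $\Gr$-process. For the inductive step, set $i' := \proj(w)$, recall that $k(w) = k(i')$, and split into cases according to whether $i'$ was replaced by a clique.

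If $X(i')=0$, then $w = i'$, and the neighbors of $w$ in $\tG$ are in bijection (via the parent map) with the neighbors of $i'$ in $\Gr$; the more than $k(i')$ active $\tG$-neighbors of $w$ project to distinct neighbors of $i'$ in $\Gr$, each lying in $D(i,\boldt)$ by the induction hypothesis, so $i'$ becomes active in the $\Gr$-process. If $X(i')=1$ and $i'=i$, then $\proj(w)=i\in D(i,\boldt)$ trivially. Finally, if $X(i')=1$ and $i'\neq i$, the clique $K_{i'}$ starts out fully inactive, so Lemma~\ref{lem:cascade} applies: when $k(i')\geq 1$ it forbids any vertex of $K_{i'}$ from ever becoming active, contradicting $w\in D(K,\boldt)$, so this subcase cannot occur; when $k(i')=0$, letting $w_0$ be the first vertex of $K_{i'}$ activated in the $\tG$-process, $w_0$ must have been triggered by its unique external neighbor $w_1$, and by induction $\proj(w_1)\in D(i,\boldt)$, so since $(i',\proj(w_1))$ is an edge of $\Gr$ and $k(i')=0$, vertex $i'$ also becomes active in the $\Gr$-process.

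The vertex inclusion just established implies the edge inclusion for free: any edge of $\proj(D(K,\boldt))$ corresponds to an external edge of $D(K,\boldt)$ between two distinct parents, hence is an edge of $\Gr$ whose endpoints both lie in $V(D(i,\boldt))$, and $D(i,\boldt)$ is the induced subgraph of $\Gr$ on its active set. The only delicate point in the argument is the clique subcase $X(i')=1$, $i'\neq i$: one must use Lemma~\ref{lem:cascade} to rule out partial activation of $K_{i'}$ when $k(i')\geq 1$, and, when $k(i')=0$, carefully identify the first external-activation event inside $K_{i'}$ and transfer activation to $i'$ in the $\Gr$-process.
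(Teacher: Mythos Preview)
Your proof is correct and follows the same approach as the paper, which simply states that ``the proof follows from Lemma~\ref{lem:cascade}'' without further detail. You have essentially written out the detailed induction that the paper leaves implicit, with the same key ingredient (Lemma~\ref{lem:cascade}) handling the clique case $X(i')=1$, $i'\neq i$; the first inclusion via monotonicity of the threshold process is standard and also implicit in the paper's one-line justification.
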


The proof follows from Lemma \ref{lem:cascade} in \ref{subs:contagion}. In addition, we have the following lemma, that is a consequence of \cite{lel:diff}:
\begin{Lemma}\label{lem:leldiff}
Assume $\sum_{r} r(r-1) p_r t_{r0}< \sum_r  rp_r$. Let $u$ be a vertex chosen uniformly at random among the vertices of $\tG=\tGr$, and let $i$ be the parent of $u$. Then the size of the epidemic generated by $i$ in $\proj(\tG)$ is $C(i,\boldt)=o_p(n)$.
\end{Lemma}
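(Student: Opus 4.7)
The plan is to reduce the claim to the analogous subcritical result of \cite{lel:diff} applied to the original graph $\Gr$, and then to correct for the (degree-)size-biased sampling of parents induced by picking $u$ uniformly in $\tG$.

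First I would identify $\proj(\tG)$ with $\Gr$: every pair of vertices sharing a parent $j$ with $X(j)=1$ is joined by a clique edge in $\tG$, so $\proj$ contracts each such clique back to its parent, while external edges and the unreplaced vertices are untouched. Hence $C(i,\boldt)$ is exactly the size of the symmetric-threshold cascade in $\Gr$ with thresholds $\boldk$ started from the singleton $\{i\}$.

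Second, I would invoke the subcritical statement of \cite{lel:diff} for $\Gr$: when $\sum_{r} r(r-1) p_r t_{r0} < \sum_r r p_r$, the cascade in $\Gr$ started at a uniformly chosen vertex $v$ has size $o_p(n)$. Conditioning on $(\Gr,\boldk)$, this rewrites as $\Eb[|\cA_\eps|/n]\to 0$, and therefore $|\cA_\eps|/n \to_p 0$ for every $\eps>0$, where $\cA_\eps := \{j\in[n]:C(j,\boldt)\geq \eps n\}$ is the set of ``bad'' starting vertices in $\Gr$.

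Third, I would upgrade this from a uniform $v\in[n]$ to the parent $i$ of a uniformly chosen $u\in\tG$. Conditionally on $(\boldd,(X(j))_j)$ one has $\Pb(i=j)= [(1-X(j)) + d_j X(j)]/\tn \leq (d_j+1)/\tn$, so the parent distribution is size-biased by clique size. To transfer $|\cA_\eps|/n=o_p(1)$ to $\Pb(i\in\cA_\eps)=o_p(1)$, I would estimate, via Cauchy--Schwarz and Condition \ref{cond}-(iii) (which gives $\sum_j d_j^2 = O(n)$),
\[
\sum_{j\in\cA_\eps}(d_j+1) \leq |\cA_\eps| + \sqrt{|\cA_\eps|\cdot \textstyle\sum_j d_j^2} = o_p(n).
\]
Combined with $\tn = \Theta_p(n)$ from Proposition \ref{prop:deg}, this forces $\Pb(i\in\cA_\eps\mid \Gr,\boldk)\to_p 0$, hence $\Pb(C(i,\boldt)\geq \eps n)\to 0$ for every $\eps>0$, which is exactly $C(i,\boldt)=o_p(n)$.

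The main obstacle I anticipate is precisely the third step: uniform sampling in $\tG$ does not correspond to uniform sampling in $\Gr$, so the result of \cite{lel:diff} cannot be plugged in directly. The moment hypothesis in Condition \ref{cond}-(iii) is what makes the Cauchy--Schwarz bound tight enough to transfer the $o_p(n)$ conclusion across the two sampling measures; the rest is bookkeeping based on the identification $\proj(\tG)=\Gr$.
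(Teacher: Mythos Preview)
Your argument is correct, but it differs from the paper's route. The paper notes, as you do, that the parent $i$ of a uniform $u\in\tG$ is \emph{not} uniform in $\Gr$, and so the subcritical statement of \cite{lel:diff} for a uniform seed cannot be applied directly. Rather than transferring the uniform-seed result to the size-biased parent measure as you do, the paper instead invokes the \emph{degree-based activation} result (Theorem~10 of \cite{lel:diff}) with activation probabilities $\alpha_d=(d\gamma_d+1-\gamma_d)\alpha$, which encodes exactly the expected weight of a degree-$d$ parent in the sampling of $u$, and then reruns the argument of Theorem~11-(ii) of \cite{lel:diff} (letting $\alpha\to 0$). Your approach is more self-contained: once the uniform-seed statement $|\cA_\eps|/n\to_p 0$ is in hand, the Cauchy--Schwarz bound $\sum_{j\in\cA_\eps}d_j\le \bigl(|\cA_\eps|\sum_j d_j^2\bigr)^{1/2}=o_p(n)$ together with $\tn=\Theta_p(n)$ immediately yields $\Pb(i\in\cA_\eps)\to 0$, with no need to revisit the proof machinery of \cite{lel:diff}. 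The paper's approach, on the other hand, is more systematic in that it handles any degree-dependent reweighting by a single appeal to the activation theorem; this is the same device used elsewhere in the paper (e.g.\ Theorems~\ref{th:diffA} and~\ref{th:contagionA}).
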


\begin{proof}
 We cannot use directly the result of \cite{lel:diff} that says that, if $\sum_{r} r(r-1) p_r t_{r0}< \sum_r  rp_r$ and $i$ is chosen uniformly at random in $\Gr$, then $C(i,\boldt)=o_p(n)$. The idea is to apply Theorem 10 \cite{lel:diff}, with a parameter $\balpha=(\alpha_d)_{d=0}^{\infty}$ that satisfies: $\alpha_d=(d \gamma_d+1-\gamma_d)\alpha$ for all $d$, $\alpha$ being a positive constant. Then the same arguments as for the proof of Theorem 11-\textit{(ii)} \cite{lel:diff} work.
\end{proof}

This allows to deduce easily the case \textit{(ii)} of Theorem \ref{th:contagion}: assume $\sum_{r} r(r-1) p_r t_{r0}< \sum_r  rp_r$. 
Then combining Proposition \ref{prop:cascade} and Lemma \ref{lem:leldiff} gives that the number of vertices in $\proj\left(D(u,\boldt)\right)$ is $o_p(n)$ if $u$ is chosen uniformly at random in $\tG$. Applying Lemma \ref{lem:giant_comp} with $H=D(u,\boldt)$ concludes the proof of case \textit{(ii)}.

We now assume that the cascade condition \eqref{eq:casc} is satisfied. 


The proof of \eqref{eq:pivotal_players} is a consequence of a result from \cite{lel:diff}. Indeed let $\tG$ be distributed as $\tGr$. Let $\tH$ (resp. $H$) be the subgraph of $\tG$ (resp. $\proj(\tG)$) induced by the vertices of threshold zero. Note that $\proj(\tH)=H$.
We use Theorem 11 in \cite{lel:diff} for the graph $\proj(\tG)$ (with parameter $\pi=1$): it gives the components' sizes in $H$. Let $\cC_1$ (resp. $\tC_1$) be the largest connected component in $H$ (resp. $\tH$). The number $\nu_r(\cC_1)$ of vertices with degree $r$ in $\cC_1$ is computed in the proof of Theorem 11 in \cite{lel:diff}: $\nu_r(\cC_1)/n \to_p p_r t_{r0}(1-\xi^r)$, where $\xi$ is defined in \eqref{eq:xi_cont}. Hence we can deduce the size of the connected component $\bcC_1$ in $\tH$ such that $\proj(\bcC_1)=\cC_1$: $\nu(\bcC_1)/\tn \to_p \sum_{d}  \left[d \gamma_d+(1-\gamma_d)\right] p_d t_{d0}(1-\xi^{d})/\tgamma$. The way to show that $\bcC_1=\tC_1$ whp is similar to the end of Theorem \ref{th:diff}. This ends the proof of \eqref{eq:pivotal_players}.

The idea for the rest of the proof is to make a coupling between the epidemic on $\tG$ (with threshold distribution $\boldt$), and an epidemic on $\proj(\tG)$ with a different threshold distribution, that we call $\boldt'={(t'_{s\ell})}_{s,\ell}$. 

\begin{Proposition} \label{prop:link_sym_thr_model}
 Assume the epidemic on $\tG=\tGr$ starts from a vertex $u$ that has threshold zero, and let $i$ be the parent of $u$ in $\proj(\tG)$. We consider the following distribution of thresholds ${(t'_{s\ell})}_{0\leq \ell\leq s}$ for each $s\geq 0$:
\begin{itemize}
 \item $t'_{s0}=t_{s0}$;
\item  $t'_{s\ell}=(1-\gamma_s)t_{s\ell}$ for all $0<\ell<s$;
\item  $t'_{ss}=(1-\gamma_s)t_{ss}+\gamma_s(1-t_{s0})$.
\end{itemize}
Then there exist random thresholds $(k'(j))_{1\leq j\leq n}$ with the distribution $\boldt'=(t'_{s\ell})_{0\leq \ell\leq s}$ defined above
 such that
\begin{eqnarray*}
 \proj(D(u,\boldt))=D(i,\boldt'),
\end{eqnarray*}
where $D(i,\boldt')$ is the subgraph induced by the final set of active vertices in the symmetric threshold model starting from $i$ in $\proj(\tG)$, with threshold distribution $\boldt'={(t'_{s\ell})}_{s,\ell}$.
\end{Proposition}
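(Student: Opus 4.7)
I would prove Proposition~\ref{prop:link_sym_thr_model} by constructing an explicit coupling of $\boldk'$ with $\boldk$, reusing the Bernoulli variables $(X(j))_{j\in\intn}$ already used to build $\tG$ from $\proj(\tG)$. Define
\[
k'(j) := \begin{cases} k(j) & \text{if } X(j)=0, \\ 0 & \text{if } X(j)=1 \text{ and } k(j)=0, \\ d_j & \text{if } X(j)=1 \text{ and } k(j)>0. \end{cases}
\]
Since $X(j)$ is independent of $k(j)$ and across $j$, a case split conditional on $d_j=s$ gives $\Pb(k'(j)=0\mid d_j=s)=(1-\gamma_s)t_{s0}+\gamma_s t_{s0}=t_{s0}$, $\Pb(k'(j)=\ell\mid d_j=s)=(1-\gamma_s)t_{s\ell}$ for $0<\ell<s$, and $\Pb(k'(j)=s\mid d_j=s)=(1-\gamma_s)t_{ss}+\gamma_s(1-t_{s0})$, matching $\boldt'$ with independence across $j$ preserved. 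Since $u$ has threshold $0$, $k(i)=0$, so $k'(i)=0$ regardless of $X(i)$, putting $i$ in the seed for the $\boldt'$-contagion on $\proj(\tG)$.

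I would then establish $\proj(D(u,\boldt))=D(i,\boldt')$ using Lemma~\ref{lem:cascade}. The key structural fact is that the fixed point $D(u,\boldt)$ satisfies an \emph{all-or-nothing} property on cliques: the clique $K(i)$ containing the seed $u$ (if $X(i)=1$) has threshold $0$ and therefore becomes entirely active; and any other clique $K(j)$ starts entirely inactive, so Lemma~\ref{lem:cascade} gives a clean dichotomy---either $k(j)=0$ and the clique activates in full as soon as one external neighbor does, or $k(j)>0$ and the clique stays entirely inactive forever. Consequently $\proj(D(u,\boldt))$ is well-defined as a set of parents.

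To obtain the equality of sets, I would translate the $\tG$-dynamics into $\proj(\tG)$-dynamics vertex by vertex: an unreplaced parent ($X(j)=0$) has threshold $k(j)=k'(j)$ and its $\tG$-neighbors' parents are exactly its $\proj(\tG)$-neighbors, so the activation rule is identical; a replaced parent with $k(j)=0$ (and thus $k'(j)=0$) activates in the $\boldt'$-dynamics as soon as any $\proj(\tG)$-neighbor is active, which by Lemma~\ref{lem:cascade} coincides with the activation of the entire clique $K(j)$; and a replaced parent with $k(j)>0$ (and thus $k'(j)=d_j$) never activates in the $\boldt'$-dynamics because $j$ has exactly $d_j$ neighbors in $\proj(\tG)$ and the threshold $d_j$ cannot be strictly exceeded, matching the fact that $K(j)$ stays entirely inactive. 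Induction on time yields both inclusions.

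The main subtlety is the case $X(j)=1$, $k(j)>0$: one must use that $j$ has \emph{exactly} $d_j$ neighbors in $\proj(\tG)$ (one per external half-edge of the clique) for the degenerate threshold $k'(j)=d_j$ to truly forbid activation. Equally important is the all-or-nothing property on cliques at the fixed point, which relies on Lemma~\ref{lem:cascade} and on the fact that cliques other than $K(i)$ start entirely inactive; this property is what allows $\proj$ to be applied consistently to the active set.
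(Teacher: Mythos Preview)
Your proposal is correct and follows essentially the same approach as the paper: you build the identical coupling $k'(j)$ from $(k(j),X(j))$, verify the resulting conditional distribution is $\boldt'$, and then use Lemma~\ref{lem:cascade} to match the clique-level all-or-nothing behavior in $\tG$ with the vertex-level dynamics in $\proj(\tG)$. Your presentation is in fact more explicit than the paper's (the closed-form definition of $k'(j)$, the probability computation, and the induction-on-time framing), but the underlying argument is the same.
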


\begin{proof}

Note that each vertex $i$ of $\proj(\tG)$ has two thresholds: $k(i)$ (that we used to define the epidemic on $\tG$), and the new threshold $k'(i)$, that we will define to use to make a comparison with the epidemic on $\tG$. Until the end of the proof, if no precision is given, we refer to $k'(i)$ as the threshold of $i$.

We explicit the natural coupling between the symmetric threshold model with parameter $\boldt$ in $\tG$ and the one with parameter $\boldt'$ in $\proj(\tG)$. If $u$ belongs to a clique, then the whole clique becomes active at the next step, so we can start the epidemic in $\proj(\tG)=\Gr$ from the parent $i$ of $u$. Let $v$ be the neighbor of $u$ outside the clique, and let $j$ be the parent of $v$. If the threshold $k(v)$ of vertex $v$ in $\tG$ is zero, then $v$ (and its whole clique if it has one) becomes active (see Lemma \ref{lem:cascade}). In this case, we choose $k'(j):=0$ for the threshold of $j$ in $\proj(\tG)$, so that it becomes also active in $\proj(\tG)$. If $k(v)>0$, then there are two cases: 
\begin{itemize}
 \item If $X(j)=1$, vertex $v$ and its clique stay inactive (Lemma \ref{lem:cascade}). In this case, we choose $k'(j):=s$ for the threshold of $j$ (so that it stays inactive).
\item If $X(j)=0$, vertex $v$ becomes active if and only if it has at least $k(v)+1$ active neighbors. So we set $k'(j):=k(v)=k(j)$ for the threshold of $j$.
\end{itemize}
Since the random variables $X(j)$, for $j$ in $\proj(\tG)$, are independent, the thresholds we associate to each node are also independent. In addition, we can easily verify that the conditional probability distribution of thresholds (knowing that the degree of the node is $s$) is given by ${(t'_{s\ell})}_{0\leq \ell\leq s}$. In fact, the epidemic we consider in $\proj(\tG)$ is almost the same as the one with parameter $\boldt$, except that we randomly put some nodes $j$ (those such that $X(j)=1$) to a threshold so high that they stay inactive.
\end{proof}

More precisely, let $C_{s\ell}(u,\boldt)$ (resp. $C'_{s\ell}(i,\boldt')$) be the final number of active vertices with degree $s\geq 0$ and threshold $\ell$ at the end of the symmetric threshold epidemic on $\tG$ (resp. $\proj(\tG)$), with threshold parameter $\boldt$ (resp. $\boldt'$), when the only vertex in the initial seed is $u$ (resp. $i$). Then, using the coupling described above, we have the following result, for each degree $s\geq 0$:
\begin{itemize}
 \item  $C_{s0}(u,\boldt)=C'_{s0}(i,\boldt')[sY_s+(1-Y_s)]$, where $Y_s$ is the proportion of vertices $j$ in $\proj(\tG)$ such that $X(j)=1$, among those that have degree $s$ and that belong to the cascade triggered by $i$.
\item For all $\ell\neq 0$, we have that $C_{s\ell}(u,\boldt)=C'_{s\ell}(i,\boldt')$, since the vertices of positive threshold that belong to the cascade triggered by $u$ are exactly those that are not replaced by a clique.
\end{itemize}

We have that $Y_s/n\to_p\gamma_s$ for all $s$, and the limit for $C'_{s\ell}(i,\boldt')$ is given by the following lemma (which is a slight extension of Theorem 11 \cite{lel:diff}):


\begin{Lemma} \label{lem:sym_thr_model}
 Assume (using the notations of Theorem \ref{th:contagion}) that $\zeta=0$ or $\zeta$ is such that there exists $\eps>0$ with $\lambda z^2<h(z)$ for $z\in (\zeta-\eps,\zeta)$. Then, for any $i$ that belongs to the set of pivotal players in $\proj(\tG)$, we have:
%
\begin{eqnarray*}
C'_{s\ell}(i,\boldt')/n & \to_p & p_s t'_{s\ell} \left(1-\sum_{r\geq s-\ell}b_{sr}(\zeta)\right).
\end{eqnarray*}
In particular, for $\ell=0$, we have: $C'_{s0}(i,\boldt')/n \to_p  p_s t'_{s0} \left(1-\zeta^s\right)$.
\end{Lemma}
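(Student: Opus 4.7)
The strategy is to apply (a slight strengthening of) Theorem~11 of \cite{lel:diff} directly to the symmetric threshold model on the graph $\proj(\tG)$, which is distributed as $\Gr$ and therefore satisfies Condition~\ref{cond} with degree distribution $\boldp$, using the coupled threshold distribution $\boldt'$ built in Proposition~\ref{prop:link_sym_thr_model}. Theorem~11 of \cite{lel:diff} gives the asymptotic cascade size triggered by a pivotal player in a configuration model, expressed via the largest solution $\zeta$ of a fixed-point equation of the form $\lambda z^2 = h'(z)$ where $h'$ is built from the threshold distribution; the technical assumption that either $\zeta = 0$ or that $\lambda z^2 < h'(z)$ strictly on some interval $(\zeta-\eps,\zeta)$ is precisely the standard regularity condition there, preventing the driving death-process trajectory from halting at a tangential fixed point above $\zeta$.

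The first task is therefore to verify that the function $h'$ produced by $\boldt'$ coincides with the function $h$ defined in Theorem~\ref{th:contagion}. Writing
\begin{eqnarray*}
h'(z) = \sum_s s p_s t'_{s0} z^s + \sum_s p_s \sum_{s\geq \ell \neq 0} t'_{s\ell} \sum_{r\geq s-\ell} r\, b_{sr}(z),
\end{eqnarray*}
splitting the $\ell=s$ term (where $\sum_{r\geq 0} r\, b_{sr}(z) = sz$), and substituting $t'_{s0}=t_{s0}$, $t'_{s\ell}=(1-\gamma_s)t_{s\ell}$ for $0<\ell<s$, and $t'_{ss}=(1-\gamma_s)t_{ss}+\gamma_s(1-t_{s0})$, one recovers exactly the expression for $h(z)$ in \eqref{eq:zetaCont}. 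The cascade condition $\sum_r r(r-1) p_r t'_{r0} > \sum_r r p_r$ required by \cite{lel:diff} coincides with \eqref{eq:casc} since $t'_{r0}=t_{r0}$, so pivotal players in $\proj(\tG)$ have positive density (consistently with \eqref{eq:pivotal_players}). Hence $\zeta$ in the present lemma is the same as the $\zeta$ in \cite{lel:diff} applied to $\boldt'$.

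The remaining task is the refinement: Theorem~11 of \cite{lel:diff} as stated controls the total cascade size, whereas here we need the per-type count $C'_{s\ell}(i,\boldt')$. This is where the ``slight extension'' enters, and is the main (though routine) obstacle. The proof of Theorem~11 in \cite{lel:diff} already proceeds through a half-edge death process whose surviving fraction converges to $\zeta$; by exchangeability of half-edges and standard concentration (as in \cite{ja09}), a node of degree $s$ and threshold $\ell$ becomes active if and only if at least $s-\ell$ of its incident half-edges are exposed before the process stops, so the limiting proportion of such active nodes equals
\begin{eqnarray*}
p_s\, t'_{s\ell}\Bigl(1 - \sum_{r\geq s-\ell} b_{sr}(\zeta)\Bigr).
\end{eqnarray*}
The regularity hypothesis on $\zeta$ is used exactly as in \cite{lel:diff} to ensure the trajectory is forced down to $\zeta$ rather than arrested earlier. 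Plugging $\ell=0$ and using $\sum_{r\geq s} b_{sr}(\zeta)=\zeta^s$ yields the stated $p_s t'_{s0}(1-\zeta^s)$, completing the plan.
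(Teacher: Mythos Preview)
Your proposal is correct and follows essentially the same route as the paper: both invoke the slight extension of Theorem~11 in \cite{lel:diff} on $\proj(\tG)$ with the modified threshold law $\boldt'$, reading off the per-$(s,\ell)$ limit from the surviving-half-edge fraction $\zeta$. Your explicit verification that the function $h'$ built from $\boldt'$ coincides with the $h$ of Theorem~\ref{th:contagion} (so that the two $\zeta$'s agree) is a useful detail that the paper leaves implicit; the remainder---complementing the inactive count $p_s t'_{s\ell}\sum_{r\geq s-\ell}b_{sr}(\zeta)$ to get the active count and specializing to $\ell=0$---is identical.
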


\begin{proof}
 By slight extension of Theorem 11 \cite{lel:diff}, the number of inactive nodes with original
degree $s$, degree $r$ in the graph of inactive nodes and threshold
$\ell$ tends to $\sum_{i\geq s-r-\ell}p_s t'_{s\ell}b_{sr}(\zeta)b_{s-r,i}(0)=p_s t'_{s\ell}b_{sr}(\zeta) \Indb \{r\geq s-\ell\}$.
Hence summing over $r$ gives that the number of inactive nodes with original degree $s$ and threshold $\ell$ tends to $p_s t'_{s\ell} \sum_{r\geq s-\ell}b_{sr}(\zeta)$, which ends the proof.
\end{proof}

We assume that $\zeta=0$ or $\zeta$ is such that there exists $\eps>0$ with $\lambda z^2<h(z)$ for $z\in (\zeta-\eps,\zeta)$. Let $u$ be a vertex in $\tG$ whose parent $i$ belongs to the set of pivotal players in $\proj(\tG)$. Let $C_{s}(u,\boldt)$ be the final number of active vertices with degree $s\geq 0$ at the end of the symmetric threshold epidemic on $\tG$, with threshold parameter $\boldt$, when the only vertex in the initial seed is $u$. Then we have:
\begin{eqnarray*}
\frac{C_s(u,\boldt)}{\tn} &\overset{p}{\longrightarrow}&  \frac{\left[s \gamma_s+(1-\gamma_s)\right] p_s}{\tgamma} t'_{s0}(1-\zeta^{s}) + \frac{p_s}{\tgamma} \sum_{\ell\neq 0}t'_{s\ell} \left(1-\sum_{r\geq s-\ell}b_{sr}(\zeta)\right)
\end{eqnarray*}

Using the definition of $\boldt'$, we have that $t'_{s0}=t_{s0}$ and: 
\begin{eqnarray*}
 \sum_{\ell\neq 0} t'_{s\ell}\sum_{r\geq s-\ell}b_{sr}(\zeta) &=& \sum_{\ell\neq 0}  (1-\gamma_s)t_{s\ell} \sum_{r\geq s-\ell}b_{sr}(\zeta)+  \gamma_s(1-t_{s0}),
\end{eqnarray*}
which finally gives:
\begin{eqnarray*}
\frac{C_s(u,\boldt)}{\tn} \overset{p}{\longrightarrow}  \frac{\left[s \gamma_s+(1-\gamma_s)\right] p_s}{\tgamma} t_{s0}(1-\zeta^{s}) + \frac{(1-\gamma_s)p_s}{\tgamma}  \left(1-t_{s0}-\sum_{\ell\neq 0}t_{s\ell}\sum_{r\geq s-\ell}b_{sr}(\zeta)\right)
\end{eqnarray*}

Then, by an argument similar as the one at the end of Theorem \ref{th:diff} or equation \eqref{eq:pivotal_players}, we have that $u$ belongs to the set of pivotal players in $\tG$, which ends the proof.


\subsection{Proof of Theorem \ref{th:contagionA}}

We use the same idea as in the previous proof. The same statement as for Proposition \ref{prop:link_sym_thr_model} holds when the epidemic starts from a set (instead of a single vertex $u$). Indeed, let $S$ be the initial seed in $\proj(\tG)$. By definition, the initial seed $\tS$ in $\tG$ consists of the vertices whose parent belongs to $S$. 

Let $C_{s\ell}(\tS,\boldt)$ (resp. $C'_{s\ell}(S,\boldt')$) be the final number of active vertices with degree $s\geq 0$ and threshold $\ell$ at the end of the symmetric threshold epidemic on $\tG$ (resp. $\proj(\tG)$), with threshold parameter $\boldt$ (resp. $\boldt'$, defined in Proposition \ref{prop:link_sym_thr_model}), when the initial seed is $\tS$ (resp. $S$). 

Using a slight extension of Theorem 10 in \cite{lel:diff}, we have, for all $s\geq 0$ and $\ell\geq 0$:
\begin{eqnarray*}
C'_{s\ell}(S,\boldt')/n & \to_p & p_s t'_{s\ell} \left(\alpha_s+ (1-\alpha_s)\left(1-\sum_{r\geq s-\ell}b_{sr}(\zeta)\right)\right),
\end{eqnarray*}
where $\zeta$ is defined in \eqref{eq:zetaContA}. More precisely, the first term $p_s t'_{s\ell} \alpha_s$ comes from the vertices that belong to the initial seed $S$, and the second one $p_s t'_{s\ell} (1-\alpha_s)\left(1-\sum_{r\geq s-\ell}b_{sr}(\zeta)\right)$ comes from those that are activated during the process. In order to obtain the asymptotic for $C_{s\ell}(\tS,\boldt)/n$, we have to multiply the first term by $(s\gamma_s+1-\gamma_s)$. The multiplicative constant for the second term depends on the value of the threshold $\ell$: if $\ell=0$, we multiply the second term by $(s\gamma_s+1-\gamma_s)$, and if $\ell>0$, we multiply it by $1$ (since the vertices with positive threshold that are activated during the process necessarily do not belong to a clique).
Summing over $s$, $\ell$ and replacing $\boldt'$ by its expression gives the following limit, as $n\to\infty$:
\begin{eqnarray*}
C(\boldt,\balpha)/n & \to_p& \sum_{s}p_s t_{s0}  (s \gamma_s+1-\gamma_s) \Big[ \alpha_s +(1-\alpha_s)(1-\zeta^{s}) \Big] \\ 
&& +\sum_{s} p_s (1-\gamma_s) \alpha_s(s \gamma_s+1-\gamma_s)(1-t_{s0})  \\
&&+\sum_{s}  p_s(1-\gamma_s)(1-\alpha_s)\left[(1-t_{s0})-\sum_{\ell\neq 0}t_{s\ell}\sum_{r\geq s-\ell}b_{sr}(\zeta)\right] \\
&&+\sum_{s}  p_s \gamma_s (1-t_{s0})\alpha_s( \gamma_s+1-\gamma_s) .
\end{eqnarray*}
Gathering some terms and using that $\tn/n\to_p\tgamma$ ends the proof of Theorem \ref{th:contagionA}.

\section{Conclusions}


Up to out knowledge, our analysis is the first systematic study of random graphs with both a tunable asymptotic degree distribution and a clustering coefficient. Our model has the advantage to still be tractable for the analysis of diffusion or symmetric threshold model.

For both models, we are able to derive explicit formulas for the cascade condition, \ie the condition under which a single infected individual can turn a positive fraction of the population into infected individuals. When such a cascade is possible, the expression of its size is given analytically. 
In the case of random regular graphs, we proved that the clustering 'inhibits' the diffusion process. Numerical evaluations also show that clustering decreases the cascade size of the diffusion process for regular graphs, and 'inhibits' the diffusion process for power-law graphs.
The impact of clustering on the symmetric threshold model is studied in the particular case of the contagion model described in \ref{subs:contagion_clust}: numerical evaluations show that the effect of clustering on the contagion process depends on the value of the mean degree in the graph: while clustering 'inhibits' the contagion for a low mean degree, the contrary happens in the high values regime. When a cascade is possible, we observe that clustering decreases its size.

In addition, we can also compute explicitly the cascade size in the case of a degree based activation, for both diffusion and symmetric threshold models. 
This theoretical analysis paves the way to a possible control of such epidemic processes as done in \cite{bcgs10} or \cite{lel:sig09}.


\appendix

\section{Branching process approximation for the diffusion threshold (with a single activation)}
\label{app:BP}

We can guess the value of the diffusion threshold $\pi_c$ given in Theorem \ref{th:diff} using a branching process approximation. Indeed the random graph $\Gr$ can be approximated by a branching process $\Gamma$ in which each node (except the root) has a number of offspring distributed as $D^*$. We add cliques in this branching process as in $\tGr$, which gives a graph $G_{\Gamma}$. We then proceed in two steps: first we delete independently with probability $1-\pi$ (in $G_{\Gamma}$) each ``internal'' edge, \ie edge inside a clique; second we delete independently with probability $1-\pi$ (in the new graph) each ``external'' edge, \ie edge outside cliques.

\begin{center}

 \begin{tabular}{|c|c|c|}
 \hline
 & $\Gamma$ & $G_{\Gamma}$ \\
Before percolation inside cliques &\raisebox{-1.2cm}{\includegraphics[scale=0.3]{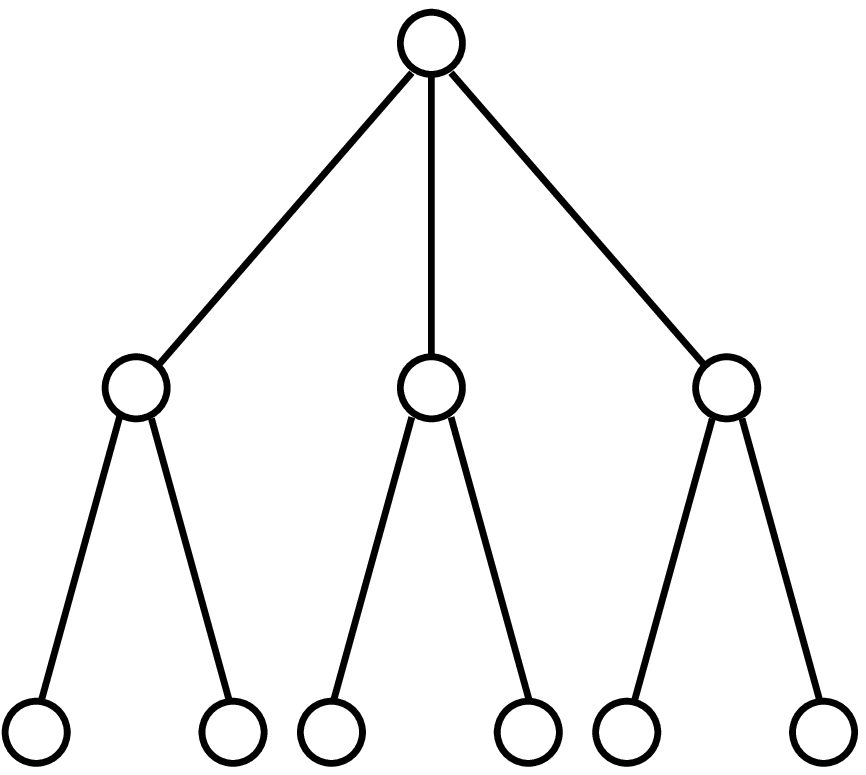}}&\raisebox{-1.2cm}{\includegraphics[scale=0.3]{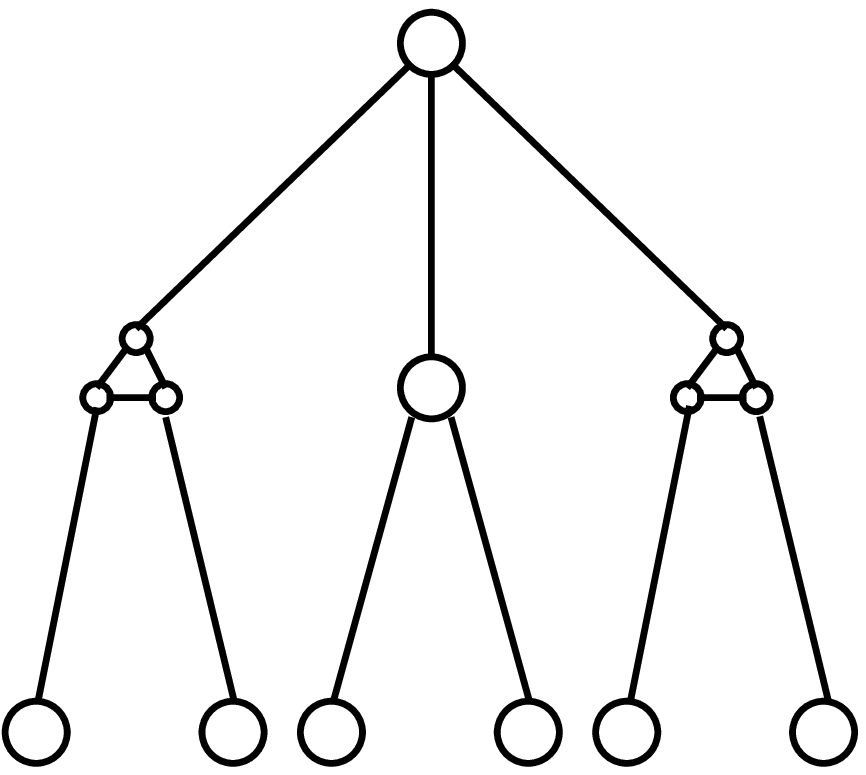}} \\
\hline
& $\Gamma'$ & $G'_{\Gamma}$ \\
After percolation inside cliques &\raisebox{-1.2cm}{\includegraphics[scale=0.3]{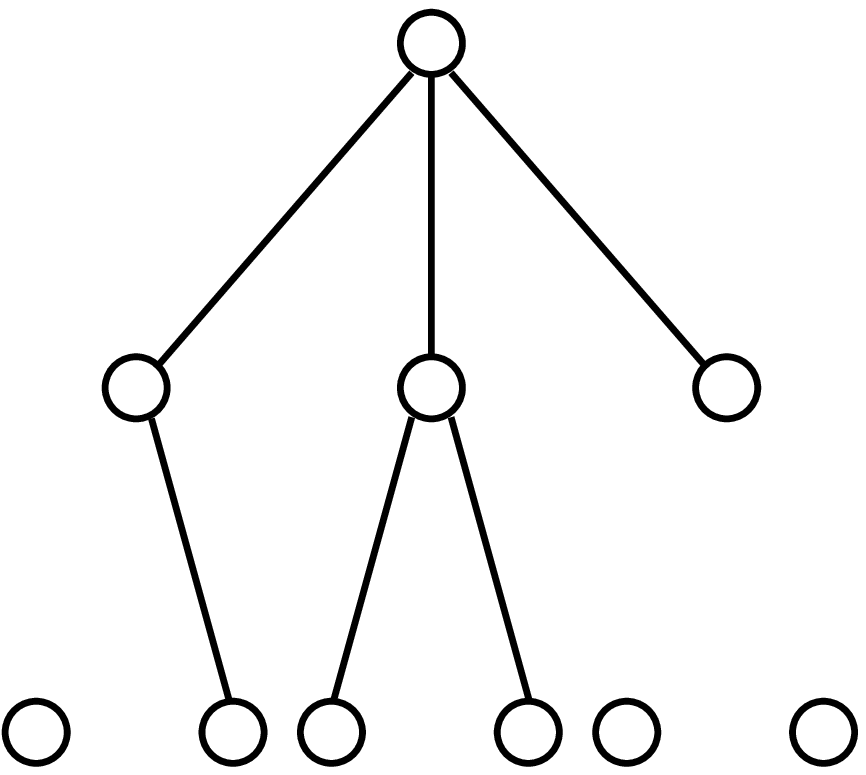}}&\raisebox{-1.2cm}{\includegraphics[scale=0.3]{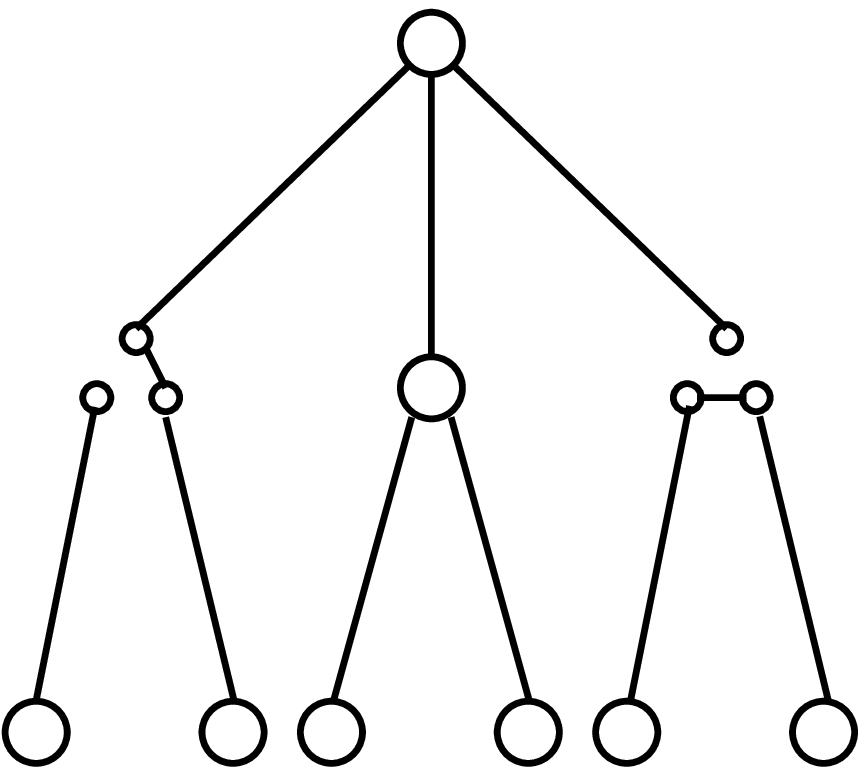}}\\
\hline
\end{tabular}

\end{center}

After the first deletion of edges, we get a new graph $G'_{\Gamma}$ in which original cliques can be broken into several components. If we make the equivalent transformation in the original branching process $\Gamma$, it means that a node can lose some of its children. 

More precisely, we consider a node $i$ in the branching process $\Gamma$. Let $e$ be the edge of $i$ that links $i$ to the previous generation. The degree of $i$ is distributed as $D^*+1$. We assume $D^*+1=d$. In $G_{\Gamma}$, node $i$ is replaced by a clique $K$ with probability $\gamma_d$. In that case, let $v$ be the vertex in $K$ whose edge outside the clique is $e$. After having deleted independently each edge inside the clique with probability $1-\pi$, the probability that the component of vertex $v$ inside $K$ contains $k$ vertices (including $v$ itself) is given by $f(d,k,\pi)$. Hence the probability that $v$ is linked to $k$ vertices (including the one linked by $e$) is: $ (1-\gamma_d) \Indb(d=k) + \gamma_d f(d,k,\pi) = \Pb\left( \cK(D^*+1,\pi,\boldg)=k\right)$. The new distribution of offspring in the corresponding branching process $\Gamma'$ is thus $\cK(D^*+1,\pi,\boldg)-1$. Finally, we remove each (external) edge with probability $1-\pi$, which gives $\pi \Eb\left[ \cK(D^*+1,\pi,\boldg)-1\right]$ for the expected number of offspring.

\end{document}